\newcommand{ \R }{ \mathbb{R} }
\newcommand{\N}{\mathbb{N}}
\newcommand{\CC}{\mathbb{C}}
\newcommand{ \n }{ \nabla }
\newcommand{ \V }{ T(M/B) }
\newcommand{ \Ho }{ T_H M }
\newcommand{\A}{\mathbb{A}}
\newcommand{\AD}{\hat{A} \left(\n^{M/B} \right)}
\newcommand{\Int}{\int\limits_{M/B}}
\newcommand{\Z}{\mathbb{Z}}
\newcommand{\PP}{\mathbb{P}_t}
\newcommand{\eps}{\varepsilon}
\newcommand{\trev}{\tr^{\ev}}
\newcommand{\del}{\partial}
\DeclareMathOperator{\im}{im}
\DeclareMathOperator{\ind}{ind}
\DeclareMathOperator{\tr}{tr}
\DeclareMathOperator{\chern}{ch}
\DeclareMathOperator{\RE}{Re}
\DeclareMathOperator{\IM}{Im}
\DeclareMathOperator{\supp}{supp}
\DeclareMathOperator{\ev}{ev}
\DeclareMathOperator{\odd}{odd}
\DeclareMathOperator{\grad}{grad}
\DeclareMathOperator{\spec}{spec}
\DeclareMathOperator{\dR}{dR}
\DeclareMathOperator{\loc}{loc}
\theoremstyle{plain}
\newtheorem{theorem}{Theorem}[section]
\newtheorem{lemma}[theorem]{Lemma}
\newtheorem{prop}[theorem]{Proposition}
\newtheorem{propdef}[theorem]{Proposition and Definition}
\theoremstyle{definition}
\newtheorem{defn}[theorem]{Definition}
\newtheorem{ass}[theorem]{Assumption}
\newtheorem{rem}[theorem]{Remark}
\theoremstyle{remark}
\newtheorem*{notation}{Notation}
\newenvironment{prf}{\begin{proof}[Proof:]}{\end{proof}}
\newenvironment{proofthm}[1]{\begin{proof}[Proof of Theorem #1:]}{\end{proof}}
\title[$\tilde{\eta}$ for Dirac operators with kernel over a hypersurface]{Analytical index and $\tilde{\eta}$-forms for Dirac operators with $1$-dimensional kernel over a hypersurface }
\author{Anja Wittmann}
\address{Mathematisches Institut, Albert-Ludwigs-Universität Freiburg, Eckerstr. 1, 79104 Freiburg, Germany}
\email{anja.wittmann@math.uni-freiburg.de}
\date{}
\begin{document}

\begin{abstract}
We generalize the transgression formula for the $\tilde{\eta}$-form of Bismut, Cheeger and Berline, Getzler, Vergne for vertical Dirac operators on a fibre bundle $\pi\colon M \rightarrow B$ with odd dimensional fibres where the Dirac operators have locally at most one eigenvalue of multiplicity one crossing zero transversally.
\end{abstract}
\maketitle
\tableofcontents

\pagenumbering{arabic}
\setcounter{page}{1}
\numberwithin{equation}{section}
\parindent 0pt

The Atiyah-Singer family index theorem states that a priori two very different quantities of a family of elliptic differential operators, namely the analytical and the topological or cohomological index, agree \cite[Theorem 3.1]{AtiyahSingerIV}.  We want to state its refinement on the level of differential forms, see \cite[Theorem 10.32]{BGV}. Let $X\hookrightarrow M\xrightarrow\pi B$ be a Riemannian fibre bundle of smooth manifolds with compact even dimensional fibres. For a chosen spin structure on the fibres let $V=\Sigma X \otimes L$ be a twisted fibrewise Dirac bundle with associated fibrewise Dirac operators $D$. Assume that the kernels of $D_b$, $b\in B$ have constant dimension. Then
\begin{equation}\label{ASFamily}
d\tilde{\eta}=\Int \AD \chern \left( L, \n^L\right) - \chern\left(\ker D^+ \ominus \ker D^-\right).
\end{equation}
We see that the $\tilde{\eta}$-form of Bismut and Cheeger transgresses between the cohomological and the analytical index. \newline
In the case of even dimensional fibres one can always deform the family of Dirac operators without changing the cohomology classes to obtain a family of operators with constant kernel dimension, see \cite[Theorem A1]{AtiyahKTheorie} or  \cite[Section 9.5]{BGV}. Hence it is reasonable to assume constant kernel dimension from the beginning.   
\newline
The situation for odd dimensional fibres is somehow contrary to that and varying kernel dimension is a phenomenon which cannot be omitted in general. We know that a family of Dirac operators parametrized by a manifold $B$ corresponds to a class in the topological $K$-theory $\ind D\in K^*(B)$, called the analytical index, where $*=0$ if the operators act on even dimensional manifolds and $*=1$ if they act on odd dimensional manifolds. Since constant kernel dimension implies vanishing $K^1$-class \cite[Theorem 4.1]{Ebert} we see that there can be topological obstructions to constant kernel dimension in odd dimensions. In the case where the operators are parametrized by a circle $B=S^1$ this obstruction class is given by the spectral flow in $K^1\left(S^1\right)\cong\Z$ which is a count with sign of eigenvalues crossing zero. \newline
In the present article we will consider families with odd dimensional fibres and where the fibrewise Dirac operators have one eigenvalue of multiplicity one crossing zero transversally. Therefore there is a hypersurface $B_0 \overset{i}{\hookrightarrow} B$ on which the kernels form a line bundle $\ker D \rightarrow B_0$ and the operators are invertible on $B\setminus B_0$. 
\begin{theorem}[{cf. \cite[Corollary 9.22]{BGV}}]\label{TheoremEinleitung}
Let $\A_t=\sqrt{t}D+\n^{\pi_* V}-\frac{c(T)}{4\sqrt{t}}$, $t>0$, be the Bismut superconnection. Then as $L^1_{\loc}$-currents
\begin{equation}
\lim_{t\to\infty}\tr^{\odd}\left(\exp\left(-\A_t^2\right)\right)=-\delta_{B_0}\tr\left(\exp\left(-\left(\n^{\ker}\right)^2\right)\right),
\end{equation}
where $\delta_{B_0}$ is the current of integration over the submanifold $B_0$ and $\n^{\ker}$ is the projection of $\n^{\pi_* V}$ onto $\ker D\rightarrow B_0$.
\end{theorem}
We know by \cite[Theorem 2.10]{BismutFreed} that $\tr^{\odd}\left(\exp\left(-\A_t^2\right)\right)$ is a representative for the Chern character of the analytical index for any $t>0$. Therefore we get a representative which is determined by the analytical data of the hypersurface where the Dirac operators have a kernel and the line bundle $\ker D\rightarrow B_0$. We see that the component in $H^1_{\dR}(B)$ is captured by the spectral flow as we already know by \cite[Section 7]{APS}.
We can also reformulate Theorem \ref{TheoremEinleitung} using \cite[Theorem 2.10]{BismutFreed} as   
\begin{equation}
\chern\left(\ind D\right) = i_! \chern\left(\ker D\rightarrow B_0\right)\in H^{\odd}_{\dR}(B).
\end{equation}
To understand the analytical index just by the knowledge of the eigenvalues and eigenspaces was the main motivation for \cite{DK}. In contrary to our article, R. Douglas and J. Kaminker investigated the influence of the multiplicity of the eigenvalues on the $K^1$-index.\newline
Furthermore we investigated whether the $\tilde{\eta}$-form, introduced by J.-M. Bismut and J. Cheeger in \cite{BC}, exists in this setting. It was there used to compute the adiabatic limit of $\eta$-invariants  but can also be seen as a generalization of the transgression forms introduced by D. Quillen in \cite{Quillen}. For its existence as a smooth differential form constant kernel dimension was crucial.

\begin{theorem}[{cf. \cite[Theorem 4.95]{BC}, \cite[Theorem 10.32]{BGV}}]
The $\tilde{\eta}$-form of Bismut and Cheeger
\begin{equation}
\tilde{\eta}=\frac{1}{\sqrt{\pi}}\sum_k \left(2\pi i\right)^{-k} \left[\int\limits_0^{\infty}\tr^{\ev}\left(\frac{d \A_t}{dt}\exp\left(-\A_t^2\right)\right) dt\right]_{[2k]} \in L^1_{\loc}\left(B, \Lambda^{\ev}T^*B\right)
\end{equation}
exists as differential form with locally integrable coefficients. Its differential as a current is given by
\begin{equation}\label{ddeta}
d\tilde{\eta}= \Int \AD \chern\left(L, \n^L\right) + \delta_{B_0}\chern\left(\ker D\rightarrow B_0 , \n^{\ker}\right).
\end{equation}

\end{theorem}
Just as formula (\ref{ASFamily}) this theorem gives a refinement on the level of differential forms of the Atiyah-Singer family index theorem  \cite[Theorem 3.4]{APS} in odd dimensions.  From the exact estimates in the proofs we can even see that $\tilde{\eta}$ is smooth on $B\setminus B_0$ and $i^*\tilde{\eta} \in \Omega^{\bullet}\left(B_0\right)$ is smooth, too. The only singularities are jumps at the hypersurface $B_0$. This can also be seen in the formula for its differential (\ref{ddeta}).\newline
In section \ref{example} we will consider an example of a line bundle over a sphere bundle $S^1\hookrightarrow M\xrightarrow\pi B$ there the above assumption on the eigenvalues of the fibrewise Dirac operators is fulfilled. We explicitly calculate $\tilde{\eta}$ and see that in these calculations the Bernoulli polynomials play an important role. The differential of $\tilde{\eta}$ fulfills formula (\ref{ddeta}) as expected.

\subsection*{Acknowledgements}
The content of this article is part of the PhD thesis of the author. So first and foremost the author wants to thank her advisor Sebastian Goette for his excellent support during the last years. Furthermore the author is thankful for fruitful discussions with Johannes Ebert and Cornelius Schr\"oder. She was financially supported by the DFG Graduiertenkolleg 1821 \glqq Cohomological Methods in Geometry \grqq .

\section{Fibrations and the Bismut superconnection}\label{preliminaries}
In this chapter we will fix some notation and the situation of families of manifolds we are working with. For more details see \cite[Chapter 4]{BC} or also \cite[Chapter 9, 10]{BGV}. \newline
Let $X\hookrightarrow M \overset{\pi}{\twoheadrightarrow} B $ be an oriented Riemannian fibre bundle with closed odd dimensional fibres $X$ over an oriented, connected Riemannian manifold $\left(B, g_B\right)$. We will also assume that $B$ is compact. We stated our theorems in the introduction also for non-compact base $B$ and the convergence was in $L^1_{\loc}$. However everything in this article is local in $B$ and instead of using that $B$ is compact the estimates also hold on every compact subset $K\subset B$. Therefore we will for simplicity and without loss of generality suppose $B$ to be compact. We denote the vertical tangent bundle by $\V=\ker d\pi$ and choose a horizontal distribution $\Ho\cong\pi^* TB$ such that $TM=\V \oplus \Ho$. We will denote vertical local orthonormal frames by $e_i$ and horizontal ones by $f_{\alpha}$. We take the metric $g=g_{M/B}\oplus\pi^* g_B$ and the associated Levi-Civita-connection $\n^M$. The projected connection onto $\V$ is denoted by $\n^{M/B}$ and we define a connection $\n^{\oplus}=\n^{M/B}\oplus\pi^*\n^B$ which has torsion 
\begin{equation}\label{torsion}
 T(U,V)= \n^{\oplus}_U V - \n^{\oplus}_V U - [U,V] \in \V
 \end{equation}
for horizontal vectors $U,V \in\Ho$. \newline
For a vertical Dirac bundle $ \left(V, g^V, \n^V, c\right) $ with associated fibrewise Dirac operator 
\[ D=\sum_i c\left(e_i\right) \n^V_{e_i}\colon\Gamma\left(M, V\right)\rightarrow\Gamma\left(M,V\right)\]
we get the associated vector bundle $\pi_* V\rightarrow B$ whose infinite dimensional fibres are the fibrewise smooth sections of $V$. We will make use of the natural isomorphism $\Gamma\left(B,\pi_*V\right)\cong\Gamma\left(M,V\right)$ without actually mentioning it. The induced connection 
\begin{equation}\label{connection}
\n^{\pi_* V}=\n^V + \frac{1}{2}k ,
\end{equation}
where $k$ is the mean curvature of the fibres, is Euclidean with respect to the $L^2$-metric on $\pi_* V$. The Bismut superconnection \cite[Definition 3.2]{Bismut} is then given by
\[ \A_t= \sqrt{t} D + \n^{\pi_* V} - \frac{1}{4 \sqrt{t}} c(T)\colon \Omega^{\bullet}\left(B, \pi_* V\right)\rightarrow\Omega^{\bullet}\left(B,\pi_* V\right),\]
where we assume that $dy_{\alpha}$ and $c(e_i)$ anticommute. It follows from the transgression formula, see for example \cite[Eq. (4.38)]{BC}, that 
\begin{equation}\label{tragre}
d\int\limits_{T}^s \tr^{\ev}\left(\frac{d\A_t}{dt}\exp\left(-\A_t^2\right)\right) dt = \tr^{\odd}\left(\exp\left(-\A_T^2\right)\right) - \tr^{\odd}\left(\exp\left(-\A^2_s\right)\right).
\end{equation}
If $\V$ is spin, let $\Sigma$ denote the spinor bundle for a chosen spin structure. Then we know by \cite[Theorem 2.10]{BismutFreed} that for $V=\Sigma \otimes L$
\begin{equation}
\frac{1}{\sqrt{\pi}}\lim_{T\to 0} \left( \sum_k (2\pi i)^{-k} \tr^{\odd}\left(\exp\left(-\A_T^2\right)\right)_{[2k+1]}\right)= \Int \AD \chern\left(L, \n^L\right)
\end{equation}
which is a representative for the odd Chern class of the family $\{D_b\}_{b\in B}$. One should notice that we use Chern-Weil forms of the form $P\left(-F/2\pi i\right)$ for a curvature $F$ of a connection.

\section{Example of a $S^1$-bundle}\label{example}

Before we come to the more general case, we will consider one special example of a family of Dirac operators. We are following the requirements in \cite{ZhangPaper}, where we adopt the construction of the fibre bundle but change the Dirac bundle. \newline
Let $\left(E, g^E\right)\xrightarrow\pi \left(B, g_B\right)$ be a real, Euclidean, oriented vector bundle of rank $2$ and denote by $\n^E$ a Euclidean connection on it. We write $T_H E\cong \pi^* TB$ for the horizontal bundle of $TE$, which is specified by $\n^E$. We define the metric $ g_{TE}=\pi^*g_E \oplus \pi^*g_B $ on $TE=\pi^*E \oplus T_H E$. Let
\begin{align*} 
M &= \{ v\in E \, | \, g_E(v,v)=1 \}, \\
T_H M &= T_H E|_M , \\
TM&= \ker d\pi \oplus T_H M = T(M/B) \oplus T_H M , \\
g&=g_{TE}|_M=g_{M/B}\oplus \pi^*g_B.
\end{align*}
 $M\xrightarrow\pi B$ is an oriented, Riemannian fibre bundle with fibres $X\cong S^1$. Let $e \in \Gamma\left(M, \V\right)$ be the unique positive oriented section of length $g_{M/B}(e,e)=1$ which trivializes $\V\cong M\times \R$. \newline
Let $\left(V, g^V, \n^V \right)\rightarrow M $ be a Hermitian line bundle with compatible connection. By setting $c(e)=-i$ we make it into a vertical Dirac bundle with Dirac operator $D=-i \n^V_e$. The fibrewise holonomies $e^{-2\pi i a}$ give rise to a smooth function $a\colon B\rightarrow \R\backslash\Z$. 

\begin{ass}\label{assumptiona}
$a\colon B \rightarrow \R\backslash \Z$ crosses $[0]$ transversally.
\end{ass}

We denote the codimension $1$ submanifold $a^{-1}([0])\subset B$ by $B_0$. We give $B_0$ the orientation such that 
\[\left(v_2,..., v_{m-1}\right) \in o_x(B_0) \Leftrightarrow \left(\grad_x a,  v_2,..., v_{m-1}\right)\in o_x(B)  .\]

\begin{rem}
If the holonomies give rise to a non-constant $a\colon B\rightarrow\R\backslash\Z$ we can always modify the connection $\n^V$ to fulfill assumption \ref{assumptiona}. Sard's Theorem makes sure that there exists an element $[x]\in \im a$ which is a regular value. The connection
\[ \tilde{\n}^V = \n^V - ixe^* \]
then gives rise to 
\[ \tilde{a}=a-[x]\colon B\rightarrow \R\backslash\Z \]
which crosses zero transversally.
\end{rem}

\begin{lemma}
The vector spaces $\ker D_b, b\in B_0$ form a smooth line bundle $\ker D \rightarrow B_0$ over the hypersurface $B_0$ and $D_b$ is invertible for $b\in B\backslash B_0$.
\end{lemma}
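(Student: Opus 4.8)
The plan is to put the fibrewise operators into an explicit normal form. Over a fixed point $b$, the fibre $M_b$ is the unit circle in the Euclidean plane $(E_b,g^E_b)$, hence carries the round metric of circumference $2\pi$, so that $e=\partial_s$ for an arclength parameter $s\in\R/2\pi\Z$. The Hermitian line bundle $V|_{M_b}$ is smoothly trivial over $S^1$, and in any unitary trivialization $\n^V_e=\partial_s+i\alpha$ for a smooth real function $\alpha$ on $M_b$; parallel transport gives holonomy $e^{-i\int_0^{2\pi}\alpha\,ds}$, so $\frac{1}{2\pi}\int_0^{2\pi}\alpha\,ds$ is a representative of $a(b)$ and $D_b=-i\partial_s+\alpha$. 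Conjugating by the gauge transformation $g(s)=\exp\bigl(i\int_0^s(a(b)-\alpha)\,dt\bigr)$, which is well defined and smooth on $\R/2\pi\Z$ precisely by this choice of representative, turns $D_b$ into $-i\partial_s+a(b)$. Consequently $\spec D_b=\{\,k+a(b)\mid k\in\Z\,\}$, with all eigenvalues simple.

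From this, invertibility is immediate: $D_b$ is elliptic and symmetric on the closed manifold $S^1$, hence invertible as soon as it is injective, and $0$ lies in its spectrum exactly when $a(b)\in\Z$, i.e.\ exactly for $b\in B_0$. (One may also argue directly: solving the ODE $D_bu=0$ shows that a nonzero periodic solution exists iff the holonomy $e^{-2\pi i a(b)}$ is trivial, and then the solution space is one-dimensional.) In particular $\dim\ker D_b=1$ for every $b\in B_0$.

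For the bundle structure, recall that by Assumption \ref{assumptiona} the point $[0]$ is a regular value of $a$, so $B_0=a^{-1}([0])$ is a closed codimension-one submanifold of $B$. The family $\{D_b\}$ is a smooth family of first-order elliptic operators, so for any relatively compact $U\subset B$ and any curve in $\CC$ disjoint from $\bigcup_{b\in U}\spec D_b$ the contour integral $P_b=\frac{1}{2\pi i}\oint (z-D_b)^{-1}\,dz$ defines a smooth family of projections on $U$. Given $b_0\in B_0$, choose $U$ so small that $a(b)$ stays in a small arc around $[0]$; then, by the description of $\spec D_b$ above, each $D_b$ with $b\in U$ has exactly one eigenvalue in the disc $\{\,|z|<\frac12\,\}$, so the corresponding $P_b$ has constant rank $1$ and $\im P\to U$ is a smooth line bundle. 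On $U\cap B_0$ one has $0\in\spec D_b$ and $0$ is that unique eigenvalue in the disc, hence $\ker D_b=\im P_b$; thus $\ker D$ agrees on $U\cap B_0$ with the restriction of a smooth line bundle. Since $B_0$ is compact it is covered by finitely many such $U$, and the local line bundles patch together — they all equal $\ker D$ as subsets of $\pi_*V$ — giving the desired smooth line bundle $\ker D\to B_0$.

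The only step that needs some care is the smooth dependence of the spectral data on the base point; once that is granted, both assertions follow from the elementary picture on a single fibre. Alternatively, having computed $\dim\ker D_b\equiv 1$ on $B_0$, one could simply invoke the standard fact that a smooth family of elliptic operators with locally constant kernel dimension has smooth kernel bundle, cf.\ \cite[Ch.\ 9]{BGV}.
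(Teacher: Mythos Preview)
Your proof is correct and follows essentially the same route as the paper's: compute $\spec D_b=\{k+a(b):k\in\Z\}$ (the paper just calls this ``a straight-forward calculation''), read off invertibility on $B\setminus B_0$ and $\dim\ker D_b=1$ on $B_0$, and obtain smoothness of the kernel bundle from the Riesz projection argument of \cite[Corollary~9.11]{BGV}, which you spell out explicitly. The only difference is one of detail; your gauge-transformation normal form and contour-integral construction are precisely what underlie the citations the paper makes.
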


\begin{prf}
A straight-forward calculation shows that the eigenvalues of $D_b$ are given by $\left(k+ a(b)\right)_{k\in\Z}$. Therefore the lemma follows by assumption \ref{assumptiona} and \cite[Corollary 9.11]{BGV}.
\end{prf}

\begin{lemma}[{\cite[Lemma 1.3]{ZhangPaper}}]
Let $T$ be the torsion of $\n^{\oplus}$ as in (\ref{torsion}).Then
\begin{equation}
g(T(U,V), e)=de^*\left(U,V\right)
\end{equation}
an hence $T$ defines a two-form which we will also denote by $T\in \Omega^2(B)$. 
\end{lemma}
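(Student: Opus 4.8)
The plan is a direct tensorial computation: since $\V=T(M/B)$ is a line bundle with the global unit frame $e$, the identity in question is equivalent to a scalar one, and for scalars it is just the classical fact that on horizontal vectors the torsion of $\n^{\oplus}$ equals the negative of the vertical part of the Lie bracket.

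First I would note that for horizontal vector fields $U,V$ one has $\n^{\oplus}_U V=(\pi^*\n^B)_U V\in\Gamma(\Ho)$, and the same with $U$ and $V$ interchanged; since $\n^B$ is torsion free, the horizontal components of $\n^{\oplus}_U V-\n^{\oplus}_V U$ and of $[U,V]$ agree. Inserting this into (\ref{torsion}) gives $T(U,V)=-[U,V]^{\V}$, the negative of the vertical part of $[U,V]$. Writing $e^*$ for the one-form dual to $e$ (so $e^*|_{\Ho}=0$ and $e^*(e)=1$) and using $\rk\V=1$, this yields $g(T(U,V),e)=-g([U,V],e)=-e^*([U,V])$. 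On the other hand, the invariant formula for $d$ together with $e^*|_{\Ho}=0$ gives
\[ de^*(U,V)=U\bigl(e^*(V)\bigr)-V\bigl(e^*(U)\bigr)-e^*\bigl([U,V]\bigr)=-e^*\bigl([U,V]\bigr), \]
so $g(T(U,V),e)=de^*(U,V)$ for all horizontal $U,V$. Since both sides are $\cm$-bilinear and alternating and $T$ is taken to vanish once a vertical argument is inserted, the identity holds on all of $TM$.

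It remains to check that $de^*$ is the pullback of a two-form on $B$. Here I would use that $M$ is the unit sphere bundle of the rank-two oriented Euclidean bundle $(E,g^E,\n^E)$, so it carries a free fibrewise $S^1$-action by rotations for which $\n^E$ induces a principal connection, and $e$ is the generating vector field of this action. The action is by isometries of $g$ and preserves $\Ho=T_HE|_M$, hence $e$ is Killing with $\mathcal L_e e=0$; therefore $\mathcal L_e e^*=0$, and since $\iota_e e^*=g(e,e)=1$ we get $\iota_e\,de^*=\mathcal L_e e^*-d(\iota_e e^*)=0$ and then $\mathcal L_e\,de^*=d(\iota_e\,de^*)=0$. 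A closed two-form on $M$ that is horizontal and invariant under the connected fibres is basic, so $de^*=\pi^*T$ for a unique $T\in\Omega^2(B)$, which is the claimed torsion form. The one point that needs a little care is precisely this descent step; everything before it is routine, and the descent itself is forced by the $S^1$-bundle structure (or can be verified directly from $de^*(e,W)=(\mathcal L_e g)(e,W)$ and the invariance of $g$ under the rotation flow).
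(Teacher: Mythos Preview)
Your argument is correct, but there is nothing in the paper to compare it to: the paper does not prove this lemma itself, it merely states it with a citation to \cite[Lemma~1.3]{ZhangPaper}. The computation you give is the standard one --- for horizontal $U,V$ the torsion of $\n^{\oplus}$ reduces to $-[U,V]^{\V}$, and the Cartan formula for $de^*$ together with $e^*|_{\Ho}=0$ yields the identity --- and your descent argument via $\iota_e\,de^*=0$ and $\mathcal L_e\,de^*=0$ on the principal $S^1$-bundle is the natural way to see that $de^*$ is basic. One small expository remark: the implication ``$\mathcal L_e e=0$, therefore $\mathcal L_e e^*=0$'' does not follow from the trivial identity $[e,e]=0$ alone but from the fact (which you do state) that the $S^1$-action preserves both $e$ and the horizontal distribution $\Ho$; you might phrase that step more directly.
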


\begin{lemma}[{\cite[Lemma 1.6]{ZhangPaper}}]
 The mean curvature $k$ of the fibres vanishes and therefore (\ref{connection}) leads to
 \[ \n^{\pi_* V}_X \sigma = \n^V_{X^H} \sigma .\]
\end{lemma}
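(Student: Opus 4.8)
\emph{Proof sketch (proposal).} The plan is to reduce the statement to showing that each fibre circle, parametrised by arc length, is a geodesic of $(M,g)$, and then to prove this by passing to the ambient bundle $E$.

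First I would record that, since the fibre $X=S^1$ is one-dimensional, the mean curvature is governed entirely by the unit vertical field $e$. Differentiating $g_{M/B}(e,e)\equiv 1$ gives $g\!\left(\n^M_e e,\,e\right)=0$, and as $\V$ has rank one this forces $\n^M_e e$ to be horizontal; it is precisely the mean curvature vector of the fibres, and the $1$-form $k$ occurring in (\ref{connection}) is, up to a sign, $Y\mapsto g\!\left(\n^M_e e,\,Y\right)$ on $\Ho$. Hence it suffices to prove $\n^M_e e=0$.

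For that step I would use $E$. The metric $g_{TE}=\pi^*g_E\oplus\pi^*g_B$ is the connection metric on the total space of $E$ associated to $\n^E$, and for such a metric every fibre $E_b$ is a flat, totally geodesic submanifold of $(E,g_{TE})$ — a standard fact which follows from a short Koszul-formula computation in a local oriented orthonormal frame $(\epsilon_1,\epsilon_2)$ of $E$, using that the local connection form of $\n^E$ is skew-symmetric. Granting this, fix $b\in B$, identify $(E_b,g_E)$ with the Euclidean plane, and let $\gamma$ be the arc-length parametrisation of the fibre circle $M_b\subset E_b$; inside this flat plane its acceleration is the inward radial vector $-\gamma$, and since $E_b$ is totally geodesic in $E$ this is also $\n^{g_{TE}}_{\dot\gamma}\dot\gamma$. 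But $-\gamma$ is vertical for $\pi\colon E\to B$, hence $g_{TE}$-orthogonal to $\Ho=T_HE|_M$, and it is orthogonal to $\dot\gamma$ inside $E_b$, so $-\gamma$ is normal to $M$ in $E$. Projecting onto $TM$ gives $\n^M_{\dot\gamma}\dot\gamma=0$, i.e. $\n^M_e e=0$, and therefore $k=0$.

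Finally, with $k=0$ the formula (\ref{connection}) reads $\n^{\pi_* V}=\n^V$, and since the $L^2$-connection on $\pi_* V$ acts by $\n^{\pi_* V}_X\sigma=\n^V_{X^H}\sigma+\tfrac12 k(X^H)\sigma$ (see \cite[Ch.~9--10]{BGV}), one concludes $\n^{\pi_* V}_X\sigma=\n^V_{X^H}\sigma$ for all $X\in\Gamma(TB)$ and $\sigma\in\Gamma(B,\pi_* V)\cong\Gamma(M,V)$. I expect the main obstacle to be the totally geodesic claim for the fibres $E_b\subset(E,g_{TE})$ (equivalently, the geodesic property of the fibre circles in $M$); one can avoid invoking it as a black box by running the Koszul computation directly on $M$ in the chart $(y,\theta)\mapsto\cos\theta\,\epsilon_1(y)+\sin\theta\,\epsilon_2(y)$, at the cost of a short explicit calculation with the connection form of $\n^E$, after which everything is bookkeeping.
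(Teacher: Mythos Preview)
The paper does not actually prove this lemma: it is stated with a citation to \cite[Lemma 1.6]{ZhangPaper} and no argument is given. Your proposal therefore supplies a proof where the paper offers none, and the approach is sound. The key points --- that for one-dimensional fibres the mean curvature reduces to the horizontal part of $\n^M_e e$, that the fibres $E_b$ are totally geodesic in $(E,g_{TE})$ because $\n^E$ is metric, and that the acceleration of the unit circle in the flat fibre is the inward radial vector, hence normal to $M$ --- are all correct and combine to give $\n^M_e e=0$. The final passage from $k=0$ to the displayed formula is then immediate from (\ref{connection}). Your anticipated ``obstacle'' (the totally geodesic claim) is indeed the only place requiring a genuine computation, and the Koszul-formula verification you outline, using metric compatibility of $\n^E$, goes through without difficulty.
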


\begin{rem}
To facilitate the computations for the next theorem we calculate the following summands of the curvature $\A_t^2$ of the Bismut superconnection. We write $[.,.]$ for the supercommutator with respect to the grading of $\Omega^{\bullet}(B)$ and keep in mind that $dy_{\alpha}$ and $c(e_i)$ anticommute.
\begin{align*}
  [ c\left(T\right),\n^{\pi_*V}]&=0 \\
 [D, c(T)]&=2Dc(T)\\
  c(T)^2&=-T^2 .  
\end{align*}
For local considerations we choose an open subset $U\subset B$ such that there exists an eigensection $\sigma\in\Gamma\left(U, \left.\pi_* V\right|_U\right) \cong \Gamma\left(\pi^{-1}(U), \left.V\right|_{\pi^{-1}(U)}\right)$ which trivializes $\left.V\right|_{\pi^{-1}(U)}$. We denote the corresponding eigenvalue by $f\colon U \rightarrow \R$ where $f \mod \Z=a$. Since $D=-i\n^V_e$ the connection $\n^V$ locally looks like
\[ \n^V = d + ife^* + \gamma \]
for $\gamma\in\Gamma\left(U, \left. T^*_H M\right|_U \otimes_{\R} \CC\right)$. We will assume that
\[ \gamma = \pi^* \beta .\]
Then we can calculate that in this trivialization
\begin{align*} [D,\n^{\pi_* V}]&=df \\
\left(\n^{\pi_* V}\right)^2 &= d \beta + ifT-T\n^V_e.
\end{align*}

  \end{rem}

 \begin{theorem}\label{etas1}
 Set
 \begin{equation}
 \alpha (T) := \frac{1}{\sqrt{\pi}}\int\limits_0^T \tr^{\ev}\left(\frac{d\A_t}{dt} \exp\left(-\A_t^2\right)\right)dt \in\Omega^{2\bullet}(B).
 \end{equation}
 For each $b\in B$ the differential form $\alpha(T)_b$ converges as $T\to\infty$ to
\[ \hat{\eta}_b= \lim_{T\to\infty} \alpha(T)_b \in\Lambda^{2\bullet} T_b^*B\]
and we get that
 \begin{align*}
\tilde{\eta}_b &= \sum_j \frac{1}{(2\pi i)^j} \hat{\eta}_{2j} \\
&=  \exp\left(-\frac{d\beta+ifT}{2\pi i}\right) \begin{cases}
 \sum\limits_{k=1}^{\infty} \frac{B_k (a)}{k!} \left(\frac{T}{2\pi}\right)^{k-1}        ,&\text{ if } b\in B\backslash B_0      \\
\sum\limits_{k=1}^{\infty} \frac{B_{2k}}{(2k)!} \left(\frac{T}{2\pi}\right)^{2k-1}, &\text{ if } b\in B_0
\end{cases}           \\
&= \exp\left(-\frac{d\beta+ifT}{2\pi i}\right) \left(-\frac{T}{2\pi}\right)^{-1} \begin{cases} \left(\frac{T/2\pi}{\exp\left(T/2\pi\right) -1} \exp\left(\frac{aT}{2\pi}\right)-1\right), &\text{ if } b\in B\backslash B_0 \\
 \left( \frac{T/2\pi}{\exp\left(T/2\pi\right) -1} -1 +\frac{T}{4\pi} \right), &\text{ if }b \in B_0
 \end{cases}
 \end{align*}
where we see $a\in [0,1)$, $f\colon U\rightarrow \R$ describes a local eigenvalue of $D$, $\beta$ is the corresponding horizontal connection form of the Dirac bundle in this trivialization and $B_{2k}$ are the Bernoulli numbers and $B_k (a)$ the Bernoulli polynomials.
 \end{theorem}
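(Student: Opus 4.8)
\emph{Strategy.} The plan is to diagonalise the family $\{D_b\}$ along the circle fibres, reduce the transgression integrand to a lattice sum over Fourier modes, and recognise the $t$-integrated sum as a Bernoulli generating function. Over the trivialising set $U$ the sections $\phi_n:=\tfrac1{\sqrt{2\pi}}\,e^{in\theta}\sigma$ ($n\in\Z$) form a fibrewise orthonormal frame of $\pi_* V$ with $D\phi_n=(n+f)\phi_n$, and since the eigenvalues $n+a(b)$ of $D_b$ are pairwise distinct for every $b$, the lines $L_n:=\CC\phi_n$ glue to smooth Hermitian line bundles over $U$ whose completed orthogonal direct sum is $\pi_* V$. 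Because $[D,\n^{\pi_* V}]=df$ is scalar one has $(D-(n+f))\n^{\pi_* V}\phi_n=0$, so the decomposition is $\n^{\pi_* V}$-parallel; and $c(T)=-iT$ is a scalar two-form, so it preserves each $L_n$ as well. Hence the Bismut superconnection restricts on $L_n$ to the rank-one superconnection $\A_t^{(n)}=\sqrt t\,(n+f)+\n^{L_n}-\tfrac1{4\sqrt t}c(T)$ and $\tr^{\ev}\!\big(\tfrac{d\A_t}{dt}\,e^{-\A_t^2}\big)=\sum_{n\in\Z}\tr^{\ev}\!\big(\tfrac{d\A_t^{(n)}}{dt}\,e^{-(\A_t^{(n)})^2}\big)$, a sum of differential forms on $U$.

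\emph{The integrand.} From the identities collected before the theorem one gets $(\A_t^{(n)})^2=t(n+f)^2+\sqrt t\,df+F_n-\tfrac1{16t}T^2$, where $F_n=(\n^{L_n})^2=d\beta-inT$ follows from $(\n^{\pi_* V})^2=d\beta+ifT-T\n^V_e$ together with $\n^V_e|_{L_n}=i(n+f)$, and $\tfrac{d\A_t^{(n)}}{dt}=\tfrac{n+f}{2\sqrt t}-\tfrac{i}{8t^{3/2}}T$. Since $df$ is a one-form with $df\wedge df=0$ while everything else is of even degree, the exponential factorises and the odd factor $e^{-\sqrt t\,df}=1-\sqrt t\,df$ drops out of $\tr^{\ev}$; the $n$-th summand is thus $e^{-t(n+f)^2}$ times the even form $\big(\tfrac{n+f}{2\sqrt t}-\tfrac{iT}{8t^{3/2}}\big)\exp\!\big(-d\beta+inT+\tfrac{T^2}{16t}\big)$. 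Factoring $\exp(-d\beta+inT)=\exp\!\big(-(d\beta+ifT)\big)\exp\!\big(i(n+f)T\big)$ pulls the curvature prefactor out front, independently of $n$.

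\emph{Integration and resummation.} Expanding the $n$-dependent remainder in powers of $T$ and integrating the resulting monomials in $t$ through $\int_0^\infty t^{\nu}e^{-t\mu^2}\,dt=\Gamma(\nu+1)|\mu|^{-2\nu-2}$ yields, in form degree $2(k-1)$, the lattice sums $\sum_{n\in\Z}\sign(n+f)^{\varepsilon}\,|n+f|^{k-1}$ with $\varepsilon\in\{0,1\}$, which regularise (via $\zeta(-m,x)=-B_{m+1}(x)/(m+1)$ and $B_m(1-x)=(-1)^m B_m(x)$) to $\pm\tfrac2k B_k(\{f\})$; the degree-zero case is $\tfrac12\sum_n\sign(n+f)=\tfrac12-\{f\}=-B_1(a)$. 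Collecting these with $s=T/2\pi$ (recall $\{f\}=a$) one obtains, degree by degree, the Bernoulli generating function $\sum_{k\ge0}\tfrac{B_k(a)}{k!}s^k=\tfrac{s\,e^{as}}{e^s-1}$, so that $\tilde\eta_b=\exp\!\big(-\tfrac{d\beta+ifT}{2\pi i}\big)\sum_{k\ge1}\tfrac{B_k(a)}{k!}\big(\tfrac T{2\pi}\big)^{k-1}$, the two closed expressions in the statement being this rewritten. That $\lim_{T\to\infty}\alpha(T)_b$ exists is clear modewise: for $n+f\neq 0$ the integrand decays exponentially in $t$, and on $B_0$ (where $f(b)=0$) the only non-decaying mode $n=0$ contributes $-\tfrac{iT}{8t^{3/2}}\exp(-d\beta+\tfrac{T^2}{16t})=O(t^{-3/2})$, integrable at infinity. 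On $B_0$ one has $B_k(0)=B_k=0$ for odd $k\ge3$, leaving only the even Bernoulli numbers, which yields the second case; the extra $B_1(0)=-\tfrac12$ present in the $a\to0$ limit of the first case but absent on $B_0$ is precisely the jump recorded by $\delta_{B_0}\chern(\ker D\to B_0)$ in formula (\ref{ddeta}), since on $B_0$ the mode $n=0$ is the genuine kernel, transgressed by $\n^{\ker}$.

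\emph{Main obstacle.} The delicate step is interchanging $\sum_{n\in\Z}$ with $\int_0^\infty dt$: the coefficient $\tfrac i{8t^{3/2}}T$ makes each $n$-th $t$-integral diverge at $t=0$, and after summation Poisson's formula only worsens this ($\sim t^{-2}$), so integrability as $t\to0$ is a property of the full integrand $\tr^{\ev}(\tfrac{d\A_t}{dt}e^{-\A_t^2})$ alone; one must invoke, or in this explicit circle-bundle situation check directly, its short-time boundedness (cf.\ \cite{BismutFreed}). I would make the computation rigorous through the Mellin transform $s\mapsto\int_0^\infty t^s\,\tr^{\ev}(\tfrac{d\A_t}{dt}e^{-\A_t^2})\,dt$, which is meromorphic, agrees termwise with the $\Gamma$-times-Hurwitz-zeta expression above for $\RE s$ large, and whose value at $s=0$ --- where the $\Gamma$-poles at negative half-integers are cancelled by the vanishing of the symmetric differences $\zeta(1-2k,\{f\})-\zeta(1-2k,1-\{f\})$ (by $B_{2k}(1-x)=B_{2k}(x)$) --- equals $\sqrt\pi\,\hat\eta_b$. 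Pinning down this cancellation, the exact jump across $B_0$, and the odd-dimensional sign conventions hidden in $\tr^{\ev}$ and in the Chern--Weil normalisation $P(-F/2\pi i)$ is where the genuine bookkeeping lies.
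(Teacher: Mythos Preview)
Your setup --- diagonalising along the Fourier modes $\phi_n$, checking that $\n^{\pi_*V}$ and $c(T)$ preserve the lines $L_n$, dropping the odd factor $e^{-\sqrt t\,df}$, and pulling out the curvature prefactor $\exp(-(d\beta+ifT))$ --- matches the paper exactly. The divergence is in how the remaining lattice sum is handled.

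The paper does \emph{not} integrate modewise. Instead it applies the Poisson summation formula to $\sum_{k\in\Z}g(k+f)$ \emph{before} integrating in $t$; the dual sum has summands carrying a factor $\exp(-\pi^2 k^2/t)$, so after Poisson the $t$-integral converges absolutely termwise (the $t\to0$ divergence you flagged disappears) and the interchange $\sum\leftrightarrow\int$ is legitimate with no regularisation needed. The $t$-integral then becomes an elementary rational function of $k$, which one expands as a geometric series in $T^2/(4\pi^2k^2)$ and recognises as the Fourier series of the Bernoulli polynomials $B_{n+1}(a)$. So the Bernoulli polynomials enter via their Fourier expansion rather than via $\zeta(-m,x)=-B_{m+1}(x)/(m+1)$.

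Your route --- integrate each mode first, get divergent Hurwitz-type sums, then invoke a Mellin transform and the short-time regularity of $\tr^{\ev}(\tfrac{d\A_t}{dt}e^{-\A_t^2})$ to justify evaluating at $s=0$ --- is a genuine alternative and would ultimately give the same answer, but it is strictly more work: you must prove that the Mellin transform is holomorphic at $s=0$ (this is where the Bismut--Freed small-time estimate is really used, not merely cited) and that its value there agrees with the honest integral. The paper's Poisson trick sidesteps this entirely, trading the analytic continuation for an absolutely convergent computation. What your approach buys is a more direct link to the Hurwitz zeta function, which is conceptually pleasant; what the paper's approach buys is that nothing is ever divergent, so the ``main obstacle'' you identify simply does not arise.
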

 \begin{rem}
An easy computation shows that our formula for $\hat{\eta}$ corresponds to the one given in \cite[(5.23)]{Savale} for $r=f$. The difference lies in the fact that in our case $f$ is a function depending on the parameter $b\in B$ such that we get a differential form which has jumps, whereas in \cite{Savale} $r\in\R$ is seen as a fixed integer and $\hat{\eta}$ is seen as a smooth differential form for each $r\in\R$.
\end{rem}

\begin{rem}
We prove that the right hand side of the formula in Theorem \ref{etas1} is independent of the chosen trivialization. Therefore we take another local eigensection $\sigma_1$ with
 \[ D\sigma_1=f_1 \sigma_1 .\]
Since the eigenvalues of $D$ differ by integers, there exists a $k\in\Z$ such that $f_1=f+k$ and $\sigma_1=e^{ik\varphi}\sigma_0$. The local horizontal connection $1$-form $\beta_1$ in this trivialization is then defined by
\[  \beta_1=\frac{g^V\left(\n^V \sigma_1, \sigma_1\right)}{g^V\left(\sigma_1, \sigma_1\right)}\] 
and we can conclude that
 \begin{align*}
 \beta_1 &= d\left(e^{-ik\varphi}\right)e^{ik\varphi} + \beta \\
 &= -ik e^* + \beta.
 \end{align*}
 It follows that
 \begin{equation*}
 d\beta_1= -ikT + d\beta 
 \end{equation*}
 and therefore 
 \begin{equation*}
 \exp\left(-\frac{d\beta +ifT}{2\pi i}\right)=\exp\left(-\frac{d\beta_1 + if_1 T}{2\pi i}\right).
 \end{equation*}
\end{rem}

 \begin{proofthm}{\ref{etas1}}

\begin{align*}
\hat{\eta} &= \frac{1}{\sqrt{\pi}}\int\limits_0^{\infty} \tr^{\ev}\left(\frac{d \A_t}{dt} \exp\left(-\A_t^2\right)\right) dt \\
&= \frac{1}{\sqrt{\pi}} \int\limits_0^{\infty} \tr^{\ev} \left( \left(D-\frac{iT}{4t}\right)\right. \\
& \quad\quad\quad \quad \cdot \left.\exp\left(-tD^2 -\sqrt{t} df- d\beta-ifT +T\n^V_e+ \frac{Dc(T)}{2}+ \frac{T^2}{16t}\right)\right) \frac{dt}{2\sqrt{t}} .
\end{align*}
We see that $df$ is the only odd differential form and because of $df\wedge df=0$ it does not contribute to $\tr^{\ev}$. Since the eigenspaces of $D$ are preserved by all occuring operators, we can write the trace as
\begin{align*}
\hat{\eta}&= \frac{1}{\sqrt{\pi}}\exp\left(-d\beta-ifT\right) \int\limits_0^{\infty} \sum\limits_{k\in \Z} \left(\left(k-f- \frac{iT}{4t}\right)\right.\\
&\quad\quad\quad\quad\quad\quad\quad\quad\quad \left.\exp\left(-t (k+f)^2 +\frac{ (k+f)iT}{2} + \frac{T^2}{16t} \right) \right) \frac{dt}{2\sqrt{t}} \\
&= \frac{1}{\sqrt{\pi}} \exp\left(-d\beta-ifT\right)\int\limits_0^{\infty} \sum\limits_{k\in \Z} \left( \left( k+f-\frac{iT}{4t}\right)\right.\\
& \quad\quad\quad\quad\quad\quad\quad\quad\quad\quad\quad\quad\left.\exp\left(\left( i\sqrt{t}(k+f)+\frac{T}{4\sqrt{t}}\right)^2\right)\right) \frac{dt}{2\sqrt{t}}.
\end{align*}
That is why we have to calculate
\begin{equation*}
\sum\limits_{k\in\Z} \left(k+f-\frac{iT}{4t}\right)\exp\left(\left(i\sqrt{t}(k+f)+\frac{T}{4\sqrt{t}}\right)^2\right) \stackrel{\mathrm{def}}= \sum\limits_{k\in\Z} g(k+f).
\end{equation*}
We denote by $\hat{g}$ the Fourier transform of $g$ and use the generalized Poisson summation formula 
\begin{align*}
\sum\limits_{k\in\Z} g(k+f)&=\sum\limits_{k\in\Z} \hat{g}(k) \cdot \exp\left(2\pi i k f\right)\\
&= -\sum\limits_{k\in\Z} ik \left(\frac{\pi}{t}\right)^{3/2} \exp\left(-\frac{\pi^2k^2}{t}+2\pi i k f +\frac{\pi kT}{2t}\right).
\end{align*}
We insert that into the formula of $\hat{\eta}$ and get
\begin{align*}
\hat{\eta}&= \pi \exp\left(-d\beta-ifT\right)\int\limits_0^{\infty} \sum\limits_{k\in\Z} \frac{k}{i} \frac{1}{t^{3/2}} \exp\left(-\frac{\pi^2 k^2}{t}\right) \exp\left(2\pi i k f + \frac{\pi k i T}{2 it}\right) \frac{dt}{2\sqrt{t}}  \\
&= - \pi \exp\left(-d\beta-ifT\right) \sum\limits_{k=1}^{\infty} \int\limits_0^{\infty} k \exp\left(-\frac{\pi^2 k^2}{t}\right) \sin\left(-2\pi kf + \frac{\pi k iT}{2t}\right) \frac{dt}{t^2}\\
&= -\pi \exp\left(-d\beta-ifT\right)\sum\limits_{k=1}^{\infty} k \int\limits_0^{\infty} \exp\left(-\pi^2 k^2 x\right)\sin\left(-2\pi fk+\frac{\pi k iT}{2} x\right) dx\\
&= -\pi \exp\left(-d\beta-ifT\right)\sum\limits_{k=1}^{\infty}\left(\frac{4k}{4\pi^2 k^2 - T^2} \sin\left(-2\pi f k\right)\right.\\
& \quad\quad\quad\quad\quad\quad\quad\quad\quad\quad\quad\quad\quad\quad\quad\quad \left. + i \frac{2T}{4\pi^3 k^2-\pi T^2 } \cos\left(-2\pi f k\right)\right) \\
&= \exp\left(-d\beta-ifT\right)\left(\sum\limits_{k=1}^{\infty} \sum\limits_{n=0}^{\dim B} \frac{T^{2n}}{2^{2n} \pi^{2n+1} k^{2n+1}} \sin(2\pi f k)\right. \\
&\quad\quad\quad\quad\quad\quad\quad\quad\quad\quad\quad\quad\left. - i \sum\limits_{k=1}^{\infty} \sum\limits_{n=0}^{\dim B} \frac{T^{2n+1}}{2^{2n+1} \pi^{2n+2} k^{2n+2}} \cos(2\pi fk)\right) .
\end{align*}

We define
\begin{equation}
g_n (x) = \begin{cases} \sum\limits_{k=1}^{\infty} \left(2^n \pi^{n+1} k^{n+1}\right)^{-1} \sin \left(2\pi kx\right), \text{ for } n \text{ even } \\
-i \sum\limits_{k=1}^{\infty} \left(2^n \pi^{n+1} k^{n+1}\right)^{-1} \cos \left(2\pi kx\right), \text{ for } n\text{ odd }
\end{cases}
\end{equation}
such that
\begin{equation}
\hat{\eta}= \exp\left(-\beta\right) \sum\limits_n g_n(f) T^n .
\end{equation}
 We see that the functions $g_n$ just depend on $a= f \mod \Z \in [0,1) $. \newline
First of all we look at the case $f(b) \in \Z$ and see immediately that $g_n=0$ for $n\in 2\N$. If $n=2k+1\in 2\N+ 1$ we compute 
\[ g_n(f)= -\frac{i}{2^{n}\pi^{n+1}} \zeta(n+1)=-\frac{i}{2^{2k+1}\pi^{2k+2}}\zeta(2k+2)\]
and therefore
\begin{align*}
\left.\hat{\eta}\right|_{B_0} &= -\exp\left(-d\beta-ifT\right) \sum\limits_{k=0}^{\infty} \frac{i}{2^{2k+1}\pi^{2k+2}} \zeta(2k+2) T^{2k+1} \\
&= -\exp\left(-d\beta-ifT\right)\sum\limits_{k=0}^{\infty} \frac{i^{2k+1}}{(2k+2)!} B_{2k+2} T^{2k+1},
\end{align*}
where $B_i$ are the Bernoulli numbers, i.e. $B_i=\left.\frac{d^i h(x)}{dx^i}\right|_{x=0}$ where $h(x)=\frac{x}{e^x -1}$. We have $B_{2k+1}=0 $ if $k\geq 1$ and get
\begin{align*}
\left.\hat{\eta}\right|_{B_0} &=-\exp\left(-d\beta-ifT\right)\left(iT\right)^{-1}\sum\limits_{k=0}^{\infty}\left. \frac{d^{2k+2}h(x)}{dx^{2k+2}}\right|_{x=0} \frac{1}{(2k+2)!} (iT)^{2k+2} \\
&=\exp\left(-d\beta-ifT\right)(-iT)^{-1} \left(\frac{iT}{e^{iT}-1} -1+\frac{iT}{2}\right).
\end{align*}
For points where $f\not\in\Z$ up to a constant the functions $g_n\colon(0,1)\rightarrow\R$ are the Fourier series of the Bernoulli polynomials
\[ g_n(x)= \frac{(-1)^{n+1}}{i^n (n+1)!} B_{n+1}(x)=-\frac{i^n}{(n+1)!} B_{n+1}(x) .\]
  For Bernoulli polynomials we know that
\[ B_n(x)=\sum\limits_{k=0}^n \binom{n}{k} B_k x^{n-k}, \]
where the $B_k$ are again the Bernoulli numbers. So we get
\begin{align*}
& \left.\hat{\eta}\right|_{B\backslash B_0} =-\exp\left(-d\beta-ifT\right)\sum\limits_{n=0}^{\infty} \frac{1}{(n+1)!} B_{n+1}(a) (iT)^n \\
&=-\exp\left(-d\beta-ifT\right)(iT)^{-1} \sum\limits_{n=0}^{\infty} \sum\limits_{k=0}^{n+1} \frac{1}{k!} \left.\frac{d^k h(x)}{dx^k}\right|_{x=0} (iT)^k \frac{1}{(n+1-k)!} (iaT)^{n+1-k} \\
&=-\exp\left(-d\beta-ifT\right)(iT)^{-1} \left(\left(\sum\limits_{n=0}^{\infty} \frac{1}{n!} \left.\frac{d^n h(x)}{dx^n}\right|_{x=0} (iT)^n\right)\left(\sum\limits_{n=0}^{\infty} \frac{1}{n!} (iaT)^n\right)-1\right) \\
&=\exp\left(-d\beta-ifT\right)(-iT)^{-1}\left(\frac{iT}{e^{iT}-1} \exp\left(iaT\right) -1\right).
\end{align*}
It follows that
\[ \hat{\eta}=\exp\left(-d\beta-ifT\right)(-iT)^{-1} \begin{cases} \left(\frac{-iT}{\exp\left(-iT\right)-1} -1-\frac{iT}{2}\right), & \text{ for }  b\in B_0 \\
\left( \frac{iT}{\exp\left(iT\right)-1}\exp\left(iaT\right)-1\right), & \text{ for } b\in B\backslash B_0 
\end{cases} \]
and
\begin{align*} &\tilde{\eta}=\sum\limits_k \frac{1}{(2\pi i)^k} \hat{\eta}_{[2k]} \\
&= \exp\left(-\frac{d\beta+ifT}{2\pi i}\right)\left(-\frac{T}{2\pi}\right)^{-1} \begin{cases} \left( \frac{-T/2\pi}{\exp(-T/2\pi)-1} -1 -\frac{T}{4\pi}\right), &  b\in B_0 \\
\left( \frac{T/2\pi}{\exp(T/2\pi)-1}\exp\left(\frac{aT}{2\pi}\right)-1\right), & b\in B\backslash B_0.
\end{cases}
\end{align*}

\end{proofthm}

\begin{theorem}
We define $d\tilde{\eta}\colon \Omega^{\bullet}(B) \rightarrow \R $ by
\[ \int\limits_B \left(d\tilde{\eta}\right) \wedge \omega := -\int\limits_B \tilde{\eta} \wedge d\omega. \]
The following formula for the differential holds
\begin{equation}
d \tilde{\eta} = \Int \chern \left( V, \n^V \right) + \delta_{B_0} \chern\left(\ker D \rightarrow B_0, \n^{\ker}\right),
\end{equation}
where $\n^{\ker}=P_0 \n^{\pi_* V} P_0$ and $P_0$ is the projection onto the kernel of $D$.
\end{theorem}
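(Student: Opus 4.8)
The plan is to compute $d\tilde{\eta}$ as a current by splitting it into its classical part on the open set $B\setminus B_0$, where $D$ is invertible, and a distributional part carried by $B_0$, and to read off both from the explicit formula of Theorem~\ref{etas1}.

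First I would establish enough regularity. In any local orthonormal coframe on $B$ the coefficients of $\tilde{\eta}$ are bounded: on $B\setminus B_0$ we have $\tilde{\eta}=\exp\!\left(-\tfrac{d\beta+ifT}{2\pi i}\right)\sum_{k\ge 1}\tfrac{B_k(a)}{k!}\left(\tfrac{T}{2\pi}\right)^{k-1}$, a finite sum since $T^{j}=0$ once $2j>\dim B$, in which $d\beta,T,f$ are smooth and the Bernoulli polynomials are bounded on $[0,1]$; these coefficients extend continuously to $B_0$ from each side, with one-sided limits that differ because the two sides correspond to $a\to 0^+$ and $a\to 1^-$. Hence $\tilde{\eta}\in L^\infty(B,\Lambda^{\ev}T^*B)\subset L^1$, so $d\tilde{\eta}$ is a well-defined current and $\tilde{\eta}\wedge d\omega$ is integrable for every $\omega\in\Omega^\bullet(B)$. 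Removing a tubular neighbourhood $U_\eps$ of $B_0$, applying Stokes on each side and letting $\eps\to 0$ then yields
\[
\int_B(d\tilde{\eta})\wedge\omega=\int_{B\setminus B_0}\big(d^{\mathrm{cl}}\tilde{\eta}\big)\wedge\omega+\int_{B_0}[\tilde{\eta}]\wedge\iota^*\omega ,
\]
where $d^{\mathrm{cl}}\tilde{\eta}$ is the classical differential on $B\setminus B_0$, $\iota\colon B_0\hookrightarrow B$, and $[\tilde{\eta}]$ is the jump of $\tilde{\eta}$ across $B_0$ for the co-orientation fixed by $\grad a$; no further term concentrated on $B_0$ occurs, because $d^{\mathrm{cl}}\tilde{\eta}$ will turn out to be the restriction of a globally smooth form.

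On $B\setminus B_0$ the operators $D_b$ are invertible, with eigenvalues $k+a(b)$ bounded away from $0$ on compacta, so $\ker D$ is the zero bundle there. Hence \cite[Theorem 9.23]{BGV} (equivalently \cite[Theorem 0.1]{Dai}) applies on $B\setminus B_0$ and gives $d^{\mathrm{cl}}\tilde{\eta}=\Int\AD\chern(L,\n^L)=\Int\chern(V,\n^V)$, the last equality because $\AD\equiv 1$ for one-dimensional fibres and the spinor line bundle contributes trivially; alternatively one may differentiate the formula of Theorem~\ref{etas1} directly on $B\setminus B_0$, where $dT=d(de^*)=0$ and $da=df$, and obtain the same. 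In particular $d^{\mathrm{cl}}\tilde{\eta}$ is the restriction of a smooth, hence locally integrable, form on all of $B$, which is the first summand of the claim and justifies the last assertion of the previous paragraph.

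It remains to identify $[\tilde{\eta}]$ along $B_0$ with $\chern(\ker D\rightarrow B_0,\n^{\ker})$; this is the delicate step. The two sides of $B_0$ are $\{a\to 0^+\}$ and $\{a\to 1^-\}$, according to whether the smooth local eigenvalue $f$ crosses its integer value from above or below. On $B_0$ itself choose the eigensection with eigenvalue $0$: its horizontal connection form $\beta_0$ is the connection form of $\n^{\ker}=P_0\n^{\pi_*V}P_0$, with curvature $d\beta_0$, so that $\chern(\ker D\rightarrow B_0,\n^{\ker})=\exp(-d\beta_0/2\pi i)$ in this trivialization. By the trivialization-independence of $\exp\!\left(-\tfrac{d\beta+ifT}{2\pi i}\right)$, both one-sided limits of $\tilde{\eta}$ equal $\exp(-d\beta_0/2\pi i)$ times the corresponding bracket of Theorem~\ref{etas1}, and the difference of those brackets collapses, using $\tfrac{T/2\pi}{\exp(T/2\pi)-1}\big(\exp(\tfrac{T}{2\pi})-1\big)=\tfrac{T}{2\pi}$ against the prefactor $\left(-\tfrac{T}{2\pi}\right)^{-1}$, to $\mp 1$; hence $[\tilde{\eta}]=\pm\,\chern(\ker D\rightarrow B_0,\n^{\ker})$ pointwise on $B_0$, and with the decomposition above this gives the formula up to this one sign. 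Fixing that sign is where the real care is needed, and I expect it to be the main obstacle: one must remember that the fractional part $a$ is itself discontinuous across $B_0$, dropping by $1$, so the co-orienting ``$\grad a$'' is the gradient of an honest smooth defining function positive on the $\{a\to 0^+\}$ side, and then the orientation of $B_0$ induced by $(v_1,\dots,v_{m-1},\grad a)\in o_x(B)$, the direction of the eigenvalue crossing, the order of subtraction in $[\tilde{\eta}]$, the Stokes convention used above, and the normalization $\chern=\tr\exp(-F/2\pi i)$ all have to be matched so that the $B_0$-contribution comes out as exactly $+\,\delta_{B_0}\chern(\ker D\rightarrow B_0,\n^{\ker})$. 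The regularity statement and the computation on $B\setminus B_0$ are, by contrast, routine given Theorem~\ref{etas1} and \cite[Chapter 9]{BGV}.
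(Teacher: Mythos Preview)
Your argument is correct. It is exactly the second of the two methods the paper itself offers: the paper writes ``On the other hand we can directly calculate the formula for $d\tilde{\eta}$ by the formula for $\tilde{\eta}$ of Theorem~\ref{etas1}'' and then displays the same Stokes decomposition
\[
-\int_B \tilde{\eta}\wedge d\omega
=\lim_{\eps\to 0}\int_{B\setminus N_\eps}(d\tilde{\eta})\wedge\omega
-\lim_{\eps\to 0}\int_{B_0-\eps}i^*(\tilde{\eta}\wedge\omega)
+\lim_{\eps\to 0}\int_{B_0+\eps}i^*(\tilde{\eta}\wedge\omega),
\]
leaving the verification to the reader. Your write-up is that verification: you identify the interior term with $\Int\chern(V,\n^V)$ via the invertible-case transgression (or by differentiating the Bernoulli series directly), and you compute the jump $[\tilde{\eta}]$ from the two one-sided limits $a\to 0^+$ and $a\to 1^-$, which collapse to $\pm\exp(-d\beta_0/2\pi i)=\pm\chern(\ker D,\n^{\ker})$ because $B_k(1)-B_k(0)=\delta_{k,1}$. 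Your caution about the sign is warranted but not a gap; once the orientation convention $(\,v_1,\dots,v_{m-1},\grad a\,)\in o_x(B)$ is fixed, the boundary orientations of $B\setminus N_\eps$ pin it down.

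The paper's primary proof is different in spirit: rather than differentiate the explicit $\tilde{\eta}$, it goes back to the transgression identity $d\int_s^T\tr^{\ev}\!\big(\tfrac{d\A_t}{dt}e^{-\A_t^2}\big)dt=\tr^{\odd}(e^{-\A_s^2})-\tr^{\odd}(e^{-\A_T^2})$, takes the $s\to 0$ limit via the local index theorem, and computes the $T\to\infty$ limit \emph{as a current} directly from the heat kernel. All eigenvalues $k\neq 0$ contribute $O(e^{-ct})$; the $k=0$ term, after the rescaling $y\mapsto y/\sqrt{t}$ in the normal direction, becomes a Gaussian in $y$ whose integral against a test form yields $-\sqrt{\pi}\,i^*\exp(-d\beta)\,i^*\omega$, i.e.\ $-\sqrt{\pi}\,\delta_{B_0}\tr\exp(-(\n^{\ker})^2)$. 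This route never uses the closed Bernoulli-polynomial formula for $\tilde{\eta}$ and is the template for the general case in Section~\ref{generalcase}, where no such formula is available. Your approach, by contrast, exploits the explicit $\tilde{\eta}$ and is the cleaner way to nail down the sign in this example, but it does not generalize beyond the $S^1$-bundle.
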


\begin{prf}
We have two different possibilities to calculate the differential of $\tilde{\eta}$. On the one hand we have the transgression formula (\ref{tragre})
\begin{equation}
d \int\limits_s^T \tr^{\ev} \left(\frac{d\A_t}{dt}e^{-\A_t^2}\right) = \tr^{\odd} \left(e^{-A_s^2}\right)- \tr^{\odd}\left(e^{-\A_T^2}\right).
\end{equation}
By \cite[Theorem 2.10]{BismutFreed}  we know the limit for $s\rightarrow 0$ is
\[
\lim_{s\to 0} \frac{1}{\sqrt{\pi}}\tr^{\odd} \left(e^{-\A_t^2}\right)= \left(2\pi i\right)^{-1}\Int \det\left(\frac{R^{M/B}/2}{\sinh \left(R^{M/B}/2\right)}\right)^{1/2} \tr\left(\exp\left(-\left(\n^V\right)^2\right)\right) \]
and since $\AD=\hat{A}\left(TS^1\right)=1$ we get the first term. For the second we need to proof that
\begin{equation}
\lim_{T\to\infty} \tr^{\odd}\left(e^{-\A_T^2}\right)= -\sqrt{\pi}\delta_{B_0} \tr\left(\exp\left(-\left(\n^{\ker}\right)^2\right)\right).
\end{equation}
For that we know that for all eigenvalues $k+f$, $k\neq 0$ and all $\mathcal{C}^{\ell}$-norms
\[ \left\Vert \exp\left(-t (k+f)^2-\sqrt{t}df-d\beta - ifT+i \frac{(k+f)T}{2} + \frac{T^2}{16t}\right) \right\Vert_{\mathcal{C}^{\ell}(B)}\leq C e^{-ct} .\]
For $k=0$ we see that we cannot take the limit as a differential form, we have to integrate over the normal direction of a tubular neighbourhood $N=N_{\varepsilon}\cong B_0\times \left(-\varepsilon, \varepsilon \right)$ of $B_0$ where $f(x,y)=y$. Let $\omega\in\Omega^{\bullet}(B)$ where $\overline{\supp \omega} \subset N$
\begin{align*}
& \int\limits_{-\varepsilon}^{\varepsilon} \exp\left(-ty^2-\sqrt{t}dy -d\beta - iyT+ \frac{iyT}{2}+ \frac{T^2}{16t}\right) \omega \\
& = \int\limits_{-\varepsilon\sqrt{t}}^{\varepsilon\sqrt{t}} \exp\left(-y^2-dy -f_t^* d\beta -\frac{iyf_t^*T}{2\sqrt{t}}+ \frac{f_t^* T^2}{16t} \right) f_t^*\omega
\end{align*}
where $f_t\colon (-\varepsilon\sqrt{t}, \varepsilon\sqrt{t})\rightarrow (-\varepsilon,\varepsilon), x\mapsto\frac{x}{\sqrt{t}}$. Now we can see that we have a Gaussian bell curve and therefore
\begin{align*}
&\lim_{t\to\infty} \int\limits_{-\varepsilon}^{\varepsilon} \exp\left(-ty^2-\sqrt{t}dy-d\beta - \frac{iyT}{2}+ \frac{T^2}{16t}\right) \omega \\
&= - \sqrt{\pi} i^* \exp\left(-d\beta\right) i^*\omega,
\end{align*}
where $i\colon B_0\rightarrow B$ denotes the inclusion. \newline
On the other hand we can directly calculate the formula for $d\tilde{\eta}$ by the formula for $\tilde{\eta}$ of Theorem \ref{etas1} and
\begin{align*}
\int\limits_B \left( d\tilde{\eta}\right) \omega &=- \int\limits_B \tilde{\eta} d\omega  \\
&= -\lim_{\varepsilon \to 0} \int\limits_{B\backslash N} \tilde{ \eta} d\omega  \\
&= \lim_{\varepsilon \to 0} \int\limits_{B\backslash N}  \left( d\tilde{\eta}\right) \omega -\lim_{\varepsilon\to 0} \int\limits_{B\backslash N} d\left( \tilde{\eta} \omega\right)\\
&= \lim_{\varepsilon \to 0} \int\limits_{B\backslash N}  \left( d\tilde{\eta}\right) \omega -\lim_{\varepsilon\to 0} \int\limits_{B_0-\varepsilon} i^*\left(\tilde{\eta}\omega\right) + \lim_{\varepsilon\to 0} \int\limits_{B_0+\varepsilon} i^*\left(\tilde{\eta}\omega\right),
\end{align*}
which will lead to the same formula as the reader may easily check.

\end{prf}

\section{Transversal zero-crossing of an eigenvalue}\label{generalcase}

We will now turn to a more general setting. Let $M\rightarrow B$ be a Riemannian fibre bundle and $V\rightarrow M$ a vertical Dirac bundle as in section \ref{preliminaries}. The transgression formula in \cite[Theorem 4.95]{BC} holds for invertible vertical Dirac operators, it was generalized by \cite[Theorem 10.32]{BGV} for vertical Dirac operators with constant kernel dimension (see also \cite[Theorem 0.1]{Dai} for odd dimensional fibres).  We want to take the next step and give a generalization for a transversal zero-crossing of one eigenvalue of multiplicity one. For the proof we adopt many ideas of the proof of \cite[Theorem 3.2]{BismutComplex}. However, we have to be very careful which norms we use, since our operators are endomorphisms of an infinite rank vector bundle. We also use different contours as in \cite{BismutComplex} which comes from the fact that we want to use holomorphic funtional calculus of the form
\begin{equation}
\exp\left(-\A_t^2\right)=\frac{1}{2\pi i} \int\limits_{\Gamma} \frac{e^{-z}}{z-\A_t^2} dz
\end{equation}
rather than
\begin{equation}
\exp\left(-\A_t^2\right)=\frac{1}{2\pi i} \int\limits_{\tilde{\Gamma}} \frac{e^{-z^2}}{z-\A_t} dz.
\end{equation}

\begin{ass}\label{assumption}
We assume that we can find a covering $\{ U_i \}_{1\leq i \leq k} $ of $B$ such that on each $U_i$ either $D_b$ is invertible or we have a smooth function $f_i:U_i \rightarrow (-K,K)$ which has $0$ as a regular value, such that $\spec D_b \cap [-K -\delta, K+\delta ]=\{ f_i(b)\}$ and $f_i(b)$ is an eigenvalue of multiplicity $1$.
\end{ass}

\begin{rem}
We get a codimension $1$ submanifold 
\[ B_0= \bigcup f_i^{-1}\left(\{0\}\right) \subset B\]
where we have a complex line bundle $\ker D \rightarrow B_0$ and $D_b$ is invertible for all $b\in B\setminus B_0$. We denote by $i\colon B_0\rightarrow B$ the inclusion. As in section \ref{example} we get an orientation on $B_0$ by
\[ \left(v_2,..., v_{m-1}\right)\in o_x(B_0) \Leftrightarrow \left( \grad_x f, v_2,..., v_{m-1}\right)\in o_x(B).\]
Let $\nu B_0 \rightarrow B_0$ be the normal bundle which is trivial $\nu B_0\cong B_0\times \R$ in our situation. Then we find a constant $0<\eps\leq K$ small enough such that
\[ \exp\colon B_0\times (-\eps, \eps)\rightarrow B \]
is a diffeomorphism onto its image $N_{\eps}$. We will not fix $\eps$ since we may take it as small as needed in the proofs. Without loss of generality we may assume that under this identification 
\[ f\left(x,y\right)=y. \]
To achieve that we maybe need to change the metric on $B$ but we know by \cite[Proposition 10.2]{BGV} that $\n^{M/B}$ is independent of $g_B$ and there is also a formula for $T(U,V)=-P[U,V]$ which is independent of the metric on $B$. 
\end{rem}

\begin{propdef}
Let $P_b$, $b\in N_{\eps}$ be the orthogonal projection onto the spectral subspace $(-\eps-\delta, \eps+\delta)$ of $D_b$. Then
\[ L=\im P \rightarrow N_{\eps} \]
is a smooth line bundle on the tubular neighbourhood $N_{\eps}$ of $B_0$. We denote the projection onto the orthogonal complement $W$ by $Q=1-P$ and the projection of the connection $\n^{\pi_* V}$ onto the subbundles $L$ and $W$ by 
\[ \n^{L\oplus W}=P\n^{\pi_* V} P \oplus Q \n^{\pi_* V} Q .\]
The projections of $D$ are denoted by $D^{-}=DP= y P$ and $D^+ = DQ$. 
\end{propdef}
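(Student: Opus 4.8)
The plan is to exhibit $P_b$ as a resolvent contour integral and to read off both its rank and its smoothness from that. Since $B_0$ is a closed submanifold of the closed manifold $B$, the closure $\overline{N_\eps}$ is compact, and we may shrink $\eps<K$. For $b\in N_\eps$, Assumption \ref{assumption} says that the only point of $\spec D_b$ in $[-K-\delta,K+\delta]\supseteq[-\eps-\delta,\eps+\delta]$ is the simple eigenvalue $f(x,y)=y\in(-\eps,\eps)$, and by compactness of $\overline{N_\eps}$ the distance from the remaining spectrum to this interval is bounded below uniformly. Hence we may fix a simple closed contour $\gamma\subset\CC$, symmetric under complex conjugation, whose interior contains $[-\eps-\delta/2,\eps+\delta/2]$ (and therefore $f(b)$ for every $b\in N_\eps$) and whose closure is disjoint from $\spec D_b$ for all $b\in\overline{N_\eps}$. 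Then
\[ P_b=\frac{1}{2\pi i}\oint_\gamma (z-D_b)^{-1}\,dz,\qquad b\in N_\eps, \]
is the Riesz projection onto the part of $\spec D_b$ enclosed by $\gamma$; by the spectral theorem it equals the orthogonal spectral projection $\mathbf{1}_{(-\eps-\delta,\eps+\delta)}(D_b)$, and since the only spectrum of $D_b$ in that interval is $f(b)$, we obtain $\im P_b=\ker\bigl(D_b-f(b)\bigr)$, which is a line.

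Next I would show that $b\mapsto P_b$ is smooth. Viewing $\{D_b\}$ as a smooth family of first-order, formally self-adjoint elliptic operators on the fibrewise $L^2$-sections, the resolvent $(z-D_b)^{-1}$ depends smoothly on $(b,z)$ for $z$ off the spectrum --- say as a smooth family of bounded operators $L^2\to H^1$, or, after one more resolvent factor, of smoothing operators --- by standard elliptic theory (cf.\ \cite[Ch.\ 9]{BGV}). Integrating over the compact contour $\gamma$ then shows that $P_b$ is a smooth family of rank-one smoothing operators. A smooth family of projections of constant rank has smooth image: near $b_0\in N_\eps$, extend a nonzero $v_0\in\im P_{b_0}$ to a smooth local section $s_0$ of $\pi_* V$ and put $s(b):=P_b s_0(b)$; then $s$ is smooth, nonvanishing near $b_0$, and takes values in $\im P_b$, hence is a local frame for $\im P$. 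Thus $L=\im P\to N_\eps$ is a smooth line subbundle of $\pi_* V$, $Q=1-P$ is the complementary smooth projection onto a subbundle $W$, and the compressions $P\n^{\pi_* V}P$, $Q\n^{\pi_* V}Q$ are connections on $L$ and $W$; this is what $\n^{L\oplus W}$ denotes. The mechanism here is the same as in \cite[Corollary 9.11]{BGV}.

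Finally, the stated identities are immediate: $D_b$ commutes with $P_b$ and $Q_b$, and on $\im P_b=\ker(D_b-f(b))$ it acts as multiplication by $f(x,y)=y$, so $D^-:=DP=yP$, while $D^+:=DQ$ is the complementary part of $D$ on $W$. The one step that really demands care --- precisely because $\pi_* V$ has infinite rank, as the paper emphasises --- is the smooth dependence of the resolvent, and hence of $P$, on $b$ in a topology strong enough to make $\im P$ a genuine smooth subbundle rather than merely a continuous or $L^2$-closed one; granted the uniform contour $\gamma$, the rest is bookkeeping.
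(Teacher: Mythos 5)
Your proof is correct and is essentially the paper's proof unpacked: the paper simply cites \cite[Proposition 9.10]{BGV}, observing that $\pm\eps\pm\delta$ is never an eigenvalue of $D_b$ on $N_\eps$, and that reference is proved by exactly the Riesz-projection contour-integral argument you spell out. Nothing to add.
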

\begin{proof}
This follows from \cite[Proposition 9.10]{BGV} since $\pm\eps\pm\delta$ is not an eigenvalue of $D_b$ for $b\in N_{\eps}$.
\end{proof}

\begin{lemma}\label{trivpiv}
Locally on $N_{\eps}\cong B_0 \times (-\eps, \eps)$ we trivialize $\pi_* V$ along normal geodesics by parallel transport with respect to the connection $\n^{\pi_* V}$. (Note, that it is in general not possible to trivialize with respect to the connection $\n^{L\oplus W}$.)
\end{lemma}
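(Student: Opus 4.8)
\emph{Approach.} The statement is essentially a construction, so the plan is to write down the local trivializations explicitly and then to check that they are smooth; the only genuine issue is that $\pi_*V$ has infinite rank, so ``smooth trivialization'' must be read for the bundle $\pi_*V$ whose fibres are the fibrewise-smooth sections of $V$, i.e.\ with the smoothness of the transition data measured in all the $\mathcal{C}^{\ell}$-norms (which is exactly what the convergence estimates later in this section will need). First I would fix a point of $B_0$ and a contractible coordinate neighbourhood $O\subseteq B_0$ of it, and trivialize $\left.\pi_*V\right|_{B_0}$ over $O$: choosing coordinates in which $O$ is a ball centred at some $x_0$, I would extend the fibre $(\pi_*V)_{x_0}$ over $O$ by $\left.\n^{\pi_*V}\right|_{B_0}$-parallel transport along the rays from $x_0$, which yields a smooth trivialization $\Phi_0$ of $\left.\pi_*V\right|_{B_0}$ over $O$. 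Then, using the identification $N_{\eps}\cong B_0\times(-\eps,\eps)$ from the preceding remark — in which the normal geodesics issuing from $B_0$ become the curves $s\mapsto(x,s)$, and in which, by \cite[Proposition 10.2]{BGV}, the change of metric on $B$ needed to arrange this leaves $\n^{M/B}$ and $T$ untouched — I would set, for $(x,y)\in O\times(-\eps,\eps)$,
\[
\Phi(x,y)\;=\;\tau^x_{0\to y}\circ\Phi_0(x),
\]
where $\tau^x_{0\to y}$ is $\n^{\pi_*V}$-parallel transport along $s\mapsto(x,s)$ with $s$ running from $0$ to $y$. By construction $\Phi$ restricts to $\Phi_0$ over $B_0$, and in the resulting gauge the $\partial_y$-component of the connection form of $\n^{\pi_*V}$ vanishes identically; that is the content of the lemma.

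\emph{Checking smoothness.} Next I would argue that $\Phi$ is smooth. In any fixed auxiliary trivialization of $\pi_*V$ over $O\times(-\eps,\eps)$, the transport $\tau^x_{0\to y}$ is the solution operator of a linear transport equation $\partial_s\sigma+\Gamma_s(x,s)\,\sigma=0$ with $\Gamma_s$ smooth in $(x,s)$. Concretely, $\n^{\pi_*V}$-parallel transport along $s\mapsto(x,s)$ is, up to the integrating factor coming from the mean-curvature term in $\n^{\pi_*V}=\n^V+\tfrac{1}{2}k$, just $\n^V$-parallel transport in $M$ along the horizontal lifts of these curves; it therefore preserves fibrewise smoothness and, by smooth dependence of linear ODE flows on parameters, depends $\mathcal{C}^{\infty}$-ly — in every $\mathcal{C}^{\ell}$-norm — on $(x,y)$ and on the initial section. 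Hence $\Phi$ is a smooth trivialization of $\pi_*V$ over $O\times(-\eps,\eps)$, and covering $B_0$ by such patches $O$ produces the trivializations claimed.

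\emph{The parenthetical, and the expected obstacle.} Finally I would justify why $\n^{\pi_*V}$ cannot be replaced by $\n^{L\oplus W}$: the latter is only defined on $N_{\eps}$, and — more importantly — it preserves the orthogonal splitting $\left.\pi_*V\right|_{N_{\eps}}=L\oplus W$, so $\n^{L\oplus W}$-parallel transport would drag $\ker D_{(x,0)}$ and the whole splitting rigidly along the normal geodesics; in the trivialized picture the spectral projection $P$ would then be independent of $y$, destroying precisely the genuine $y$-dependence (e.g.\ $D^-=yP$ with $P=P_{(x,y)}$) that the later arguments exploit. The step I expect to be the real obstacle is the infinite-dimensional bookkeeping in the smoothness check: one has to be careful that $\n^{\pi_*V}$-parallel transport is well defined and $\mathcal{C}^{\infty}$ on the bundle of fibrewise-smooth sections, and that it carries over the uniform $\mathcal{C}^{\ell}$-estimates that the subsequent convergence statements rely on; everything else reduces to standard ODE dependence on parameters.
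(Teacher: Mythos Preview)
Your approach is correct and rests on the same key observation as the paper: $\n^{\pi_*V}$-parallel transport along a normal geodesic is nothing but $(\n^V+\tfrac12 k)$-parallel transport in the finite-rank bundle $V\to M$ along the horizontal lift, which disposes of the infinite-dimensional smoothness concern you flag. The paper is more economical in two respects. First, it does not first trivialize over a patch $O\subseteq B_0$ --- the lemma only claims an identification $(\pi_*V)_{(x,y)}\cong(\pi_*V)_{(x,0)}$ along each normal geodesic, not a full local trivialization over $O\times(-\eps,\eps)$ --- so your radial-transport step over $B_0$ is unnecessary. Second, rather than framing things as an abstract ODE in $\pi_*V$ and then reducing, the paper goes straight to the reduction: it cites that horizontal lifts of the normal geodesics are geodesics in $M$, shrinks $\eps$ so that each $\widetilde{\exp_b}(\cdot,t)\colon M_b\to M_{\exp_b(t)}$ is a diffeomorphism, and then transports each $\sigma_x\in V_{b,x}$ pointwise, noting that the result depends smoothly on $x\in M_b$. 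The paper does not justify the parenthetical remark, so your discussion of it is a bonus; bear in mind, though, that later on $\n^L$-parallel transport \emph{is} used to identify $L_{(x,y/\sqrt t)}$ with $\ker D_{(x,0)}$, so the obstruction to using $\n^{L\oplus W}$ is more plausibly the absence of a finite-rank reduction on the $W$-summand --- and with it the Sobolev-norm control recorded in the remark following the norms notation --- than the rigidity of $P$ you describe.
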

\begin{proof}
For $b\in B_0$ the lifts of the geodesic $\exp_b\colon (-\eps, \eps)\rightarrow N_{\eps}$ gives a family of geodesics $\widetilde{\exp_b}\colon M_b \times (-\eps, \eps)\rightarrow \pi^{-1}(N_{\eps})$, see \cite[Corollary 1.11.11]{Klingenberg}. By taking $\eps$ small enough we may assume that $\widetilde{\exp_b}(\cdot, t)\colon M_b \rightarrow M_{\exp_b (t)}$ is an isomorphism for all $t\in (-\eps, \eps)$. Therefore if $\sigma\in (\pi_* V)_b = \Gamma\left(M_b, \left. V\right|_{M_b}\right)$ we can use parallel transport for each $\sigma_x \in V_{b,x}$ with respect to the connection $\n^V + \frac{1}{2}k$ to get a vector in $V_{\widetilde{\exp_b}(x,t)}$. This depends smoothly on $x\in M_b$ so we get a smooth section in $\left( \pi_* V \right)_{\exp_b(t)}$.
\end{proof}

\begin{defn}
We denote by
\[ E_t :=\A_t^2-tD^2= \sqrt{t} [D,\n^{\pi_* V}] +\left(\n^{\pi_* V}\right)^2 -\frac{[D , c(T)]}{4}- \frac{[\n^{\pi_* V},c(T)]}{4 \sqrt{t}} +\frac{c(T)^2}{16t} . \]
By our assumption
\[ \exists \tilde{K}>0: \sup_{(x,y)\in N} f^2(x,y)+\tilde{K}=\varepsilon^2+\tilde{K}\leq \inf_{(x,y)\in N} \lambda^2_k(x,y) \quad \forall k \neq 0 ,\]
where $\lambda_k, k\neq 0$ denote all the other eigenvalues of $D$ which do not cross zero.
Let $K:= \varepsilon^2 +\frac{\tilde{K}}{2}$ and define the contours $\Omega_t, \Gamma_t\in  \CC$ 
\begin{center}
 \begin{tikzpicture}
   \node[anchor= south east] at (3,0) {$Kt$};
   \node[left] at (0,1) {$i$};
   \node[left] at (0,-1) {$-i$};
\draw [->](-1.5,0)  -- (9,0) node(xline)[right]{Re};
\draw[->](0,-1.5)--(0,1.5) node(yline)[above]{Im};  
\draw[->] (3,1)--(3,-0.5);
\draw (3, -0.5)--(3,-1);
\draw (3,1)--(5,1);
\draw[<-] (5,1)--(7,1);
\draw[->] (3,-1)--(5,-1);
\draw(5,-1)--(7,-1);
\draw[dotted](7,1)--(8,1) node[above]{$\Gamma_t$};
\draw[dotted](7,-1)--(8,-1); 
 \end{tikzpicture}

 \begin{tikzpicture}
 \draw [->](-1.5,0)  -- (9,0) node(xline)[right]{Re};
\draw[->](0,-1.5)--(0,1.5) node(yline)[above]{Im};  
 \node[anchor= south east] at (0,1) {$i$};
  \node[anchor= south east] at (0,-1) {$-i$};
  \node[anchor=south east] at (3,0){$Kt$};
  \node[anchor=south east] at (-1,0){$-1$};
 \draw[->](-1,1)--(-1,-0.5);
 \draw(-1,-0.5)--(-1,-1);
 \draw[<-](1.5,1)--(3,1) node[above]{$ \Omega_t$};
 \draw(1.5,1)--(-1,1);
 \draw [->](-1,-1)--(1.5,-1);
 \draw(1.5,-1)--(3,-1);
 \draw[->] (3,-1)--(3,-0.5);
 \draw(3,-0.5)--(3,1);
 
 \end{tikzpicture}

\end{center}
such that the small eigenvalue $tf^2(x,y)=ty^2$ of $tD^2$ lies inside the contour $\Omega_t$ and the large eigenvalues $t\lambda_k^2$ lie inside $\Gamma_t$.\newline
Since
\begin{equation}
\left(z-\A_t^2\right)^{-1}=\sum\limits_{n=0}^{\dim B} \left(z-tD^2\right)^{-1} \left(E_t\left(z-tD^2\right)^{-1}\right)^n
\end{equation}
the spectrum of $\A_t^2$ equals the spectrum of the rescaled Dirac operator . On $B_0\times \left(-\varepsilon, \varepsilon\right)$ we have $\sigma\left(\A_t^2\right)=\sigma\left(tD^2\right)=\{ t\lambda_k^2\}_{k\in\Z}$. By holomorphic functional calculus \cite[Chapter XV, Proposition 1.1]{Gohberg} we know that on $N_{\eps}$
\begin{align*}
\exp\left(-\A_t^2\right)& =\frac{1}{2\pi i} \int\limits_{\Omega_t \cup \Gamma_t} \exp\left(-z\right) \left(z-\A_t^2\right)^{-1} dz \\
&= \frac{1}{2\pi i}\int\limits_{\Omega_t} \exp\left(-z\right)\left(z-\A_t^2\right)^{-1}dz + \frac{1}{2\pi i} \int\limits_{\Gamma_t}\exp\left(-z\right)\left(z-\A_t^2\right)^{-1} dz \\
&= \PP\left(\exp\left(-\A_t^2\right)\right) + (1-\PP)\left(\exp\left(-\A_t^2\right)\right).
\end{align*}
Note that the projection 
\begin{equation*}
\PP = \frac{1}{2\pi i}\int\limits_{\Gamma_t}\left(z-\A_t^2\right)^{-1}dz\colon\Lambda^{\bullet}T^*B\otimes \pi_* V\rightarrow\Lambda^{\bullet}T^*B\otimes\pi_* V
\end{equation*}
coincides in degree $0$ with the spectral projection $P\colon\pi_* V\rightarrow L\subset \pi_* V$.
\end{defn}

\begin{defn}
We take the pullback of the bundle $\ker D\rightarrow B_0$ of $\pi_1\colon B_0\times\R\rightarrow B_0$ with the connection $\pi_1^* \n^{\ker}$ which, by abuse of notation, will also be denoted by $\n^{\ker}$. We denote the second coordinate of $B_0\times\R$ by $y$ and consider the superconnection 
\[ y+\n^{\ker}\colon \Omega^{\bullet}\left(B_0 \times\R , \pi_1^*\ker D \right) \rightarrow \Omega^{\bullet}\left(B_0\times\R , \pi_1^* \ker D\right) ,\]
where we assume that $y$ and $1$-forms anticommute. Note that this differs slightly from the superconnection $B$ introduced in \cite[Section III.a]{BismutComplex}. \newline
If $\left\vert y\right\vert\leq \eps\sqrt{t}$ we can proceed as in the previous definition and write 
\[ \exp\left(-\left(y+\n^{\ker}\right)^2\right)=\frac{1}{2\pi i} \int\limits_{\Omega_t} \exp(-z) \left(z- \left(y+\n^{\ker}\right)^2\right)^{-1} dz .\]
\end{defn}

\begin{notation}
We will need different kinds of norms in the following statements and proofs which we will introduce here. See also \cite[Appendix of IX.4, Example 2]{ReedSimon}. \newline
We denote by $H^k=W^{(k,2)}\left(M_b, V_b \right) $ the $k$th Sobolev space of sections with Sobolev norm $\left\vert\cdot\right\vert_k$, $H^0=L^2\left(M_b, V_b\right)$. For a linear operator  $A:H^k \rightarrow H^{k'}$ we define the operator norm
\begin{equation}
\left\Vert A \right\Vert_{k, k'}= \sup_{\left\vert x \right\vert_{k}} \left\vert A(x) \right\vert_{k'} .
\end{equation}
We say a bounded linear operator $A\in\mathcal{L}\left(H^0\right)$ is \textit{trace-class} if
\begin{equation}
\left\Vert A \right\Vert_1 = \tr \left\vert A \right\vert < \infty.
\end{equation}
For $1 \leq p < \infty$ the \textit{$p$-Schatten norm} is defined by
\begin{equation}
\left\Vert A \right\Vert_p =\left(\tr\left( \left\vert A\right\vert^p\right)\right)^{1/p}.
\end{equation}
For a smooth differential form $\omega \in \Omega^{\bullet}(B)$ we denote by $\left\Vert\omega\right\Vert_{\mathcal{C}^{\ell}}$ the $\mathcal{C}^{\ell}$-norm. For $\omega\in\Omega^{\bullet}(B_0\times (-\eps, \eps))$ we see $\left\Vert\omega\right\Vert_{\mathcal{C}^{\ell}(B_0)}$ as a function on $(-\eps, \eps)$.

\end{notation}

\begin{rem}
The trivialization of Lemma \ref{trivpiv} provides us with an isometry 
\[ L^2\left( M_x, V_x\right) \cong L^2 \left(M_{(x,y)}, V_{(x,y)}\right)\]
for all $\left(x,y\right)\in B_0 \times (-\eps,\eps)$. If we work with Sobolev-sections for $k>0$ we still get an isomorphism but not an isometry. However we know that the topology of the Banach spaces is the same and therefore the Sobolev norms are equivalent. In particular since $B_0$ is compact and if $\eps$ is small enough we find constants $C, c>0$ such that for all $\left(x,y\right)\in B_0 \times (-\eps,\eps)$ and all sections $\sigma \in W^{k,2}\left(M_x, V_x\right)\cong W^{k,2}\left(M_{(x,y)}, V_{(x,y)}\right)$ the following estimate holds true
\[ C \left\vert \sigma \right\vert_{k, (x,y)} \leq \left\vert\sigma\right\vert_{k, x} \leq c \left\vert \sigma \right\vert_{k, (x,y)} .\]
So in the following estimates we will make no difference for which $y\in (-\eps,\eps)$ we use the Sobolev norms because by changing the constants the estimates hold for all points $y$ and we get the same speed of convergence. 
\end{rem}

\begin{lemma}\label{bla}
Let $z \in \Gamma_t$ or $z\in\Omega_t$, $p\geq \dim M_b +1$ and $t$ big enough, then we have the following estimates: 
\begin{equation} \label{absch1}
\left\Vert \left(z-tD_b^2\right)^{-1} \right\Vert_{0,0} \leq C_1,
\end{equation}
\begin{equation}\label{absch2}
  \left\Vert \left( z-tD_b^2\right)^{-1}\right\Vert_p \leq C_2 \left(1+\frac{\left\vert z\right\vert}{t}\right),
\end{equation} 
\begin{equation}\label{absch4}
\left\Vert \left(z-tD_b^2\right)^{-1}\right\Vert_{0,2}\leq C_3 \left(1+\frac{\left\vert z \right\vert}{t}\right),
\end{equation}
for every $b\in N_{\eps}$.
\end{lemma}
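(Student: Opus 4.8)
My plan rests on the fact that $D_b$ is essentially self-adjoint and elliptic on the closed fibre $M_b$, so $tD_b^2$ is a non-negative self-adjoint operator with discrete spectrum $\sigma(tD_b^2)=\{t f(b)^2\}\cup\{t\lambda_k^2\}_{k\neq 0}$; by the choice of $\tilde K$ and of $K=\eps^2+\tilde K/2$ one has $tf(b)^2\le t\eps^2=Kt-t\tilde K/2$ and $t\lambda_k^2\ge t(\eps^2+\tilde K)=Kt+t\tilde K/2$. First I would prove \eqref{absch1} by estimating $\operatorname{dist}(z,\sigma(tD_b^2))$ from below: on a horizontal segment of $\Omega_t$ or $\Gamma_t$ one has $|\IM z|=1$, hence $|z-t\mu|\ge 1$ for all $\mu\in\R$; on the left vertical segment of $\Omega_t$ (where $\RE z=-1$) non-negativity of $tD_b^2$ gives $|z-t\mu|\ge 1+t\mu\ge 1$; and on the vertical segment $\RE z=Kt$ every eigenvalue of $tD_b^2$ is at distance $\ge t\tilde K/2$. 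Thus $\operatorname{dist}(z,\sigma(tD_b^2))\ge 1$ once $t\ge 2/\tilde K$, and since $tD_b^2$ is self-adjoint this is precisely $\left\Vert(z-tD_b^2)^{-1}\right\Vert_{0,0}=\operatorname{dist}(z,\sigma(tD_b^2))^{-1}\le 1$.

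For \eqref{absch2} I would exploit that every operator occurring is a function of $D_b^2$, hence the factors commute, and split off a Schatten class factor via Hölder's inequality:
\[ \left\Vert(z-tD_b^2)^{-1}\right\Vert_p\le\left\Vert(z-tD_b^2)^{-1}(1+D_b^2)^{1/2}\right\Vert_{0,0}\cdot\left\Vert(1+D_b^2)^{-1/2}\right\Vert_p . \]
The second factor is finite for $p\ge\dim M_b+1$, because $(1+D_b^2)^{-1/2}$ lies in the $p$-Schatten class by the Weyl asymptotics for $D_b$ (its eigenvalue counting function grows like $R^{\dim M_b}$), and by Lemma \ref{trivpiv} together with the compactness of $B_0$ it is bounded uniformly for $b\in N_\eps$. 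The first factor equals $\sup_{\mu\in\sigma(D_b^2)}(1+\mu)^{1/2}|z-t\mu|^{-1}$, which I would bound by $C(1+|z|/t)$ through a trichotomy in $\mu$: for the single small eigenvalue $\mu=f(b)^2\le\eps^2$ use $(1+\mu)^{1/2}\le(1+\eps^2)^{1/2}$ and $|z-t\mu|\ge 1$; for $\mu\ge\eps^2+\tilde K$ with $t\mu\le 2|z|$ use $(1+\mu)^{1/2}\le(1+2|z|/t)^{1/2}\le C(1+|z|/t)$ and again $|z-t\mu|\ge 1$; and for $\mu\ge\eps^2+\tilde K$ with $t\mu>2|z|$ use $|z-t\mu|\ge t\mu-|z|>t\mu/2$ together with $\sup_{\mu\ge\eps^2+\tilde K}(1+\mu)^{1/2}/\mu<\infty$ to get a bound $\le C/t$. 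Each case gives $\le C(1+|z|/t)$ with $C$ depending only on $\eps$ and $\tilde K$, which proves \eqref{absch2}.

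Estimate \eqref{absch4} follows the same template: the standard elliptic a priori estimate for the Laplace-type operator $D_b^2$ yields $\left\Vert\sigma\right\Vert_2\le C_b\left\Vert(1+D_b^2)\sigma\right\Vert_0$ with $C_b$ locally uniform in $b$, and applying this to $\sigma=(z-tD_b^2)^{-1}\tau$ reduces \eqref{absch4} to bounding $\left\Vert(1+D_b^2)(z-tD_b^2)^{-1}\right\Vert_{0,0}=\sup_{\mu\in\sigma(D_b^2)}(1+\mu)|z-t\mu|^{-1}$ by $C(1+|z|/t)$, which is exactly the previous trichotomy with the exponent $\tfrac12$ replaced by $1$ (the third case now using $\sup_{\mu\ge\eps^2+\tilde K}(1+\mu)/\mu<\infty$). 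The step I expect to be the main obstacle is not any individual estimate but the uniformity of all constants in the base point $b\in N_\eps$, and their independence of $t$ and $z$: this is exactly where one has to work through the trivialization of Lemma \ref{trivpiv} (and the uniform equivalence of the fibrewise Sobolev norms it provides), use the compactness of $B_0$, and invoke the locally uniform Weyl bound for $D_b$. Granted these, the three estimates drop out of, respectively, the resolvent formula for self-adjoint operators, Hölder's inequality for Schatten norms, and the elliptic estimate.
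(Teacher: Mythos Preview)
Your argument is correct, but it follows a different route from the paper's. For \eqref{absch1} both approaches coincide: the paper simply says ``follows from the choice of the contours'', which is exactly your distance-to-spectrum computation. For \eqref{absch2} and \eqref{absch4}, however, the paper does not use spectral calculus and a trichotomy on the eigenvalue size; instead it invokes the resolvent identity
\[
\left(z-tD^2\right)^{-1}= t^{-1}\left(i-D^2\right)^{-1} - \left(i-D^2\right)^{-1}\left(\tfrac{z}{t}-i\right)\left(z-tD^2\right)^{-1},
\]
and then reads off both estimates at once from the known bounds $\left\Vert(i-D^2)^{-1}\right\Vert_p\le C$ (Weyl/Roe) and $\left\Vert(i-D^2)^{-1}\right\Vert_{0,2}\le C$ (elliptic regularity), combined with \eqref{absch1}. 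This buys brevity and avoids any case analysis: everything reduces algebraically to the uniform Schatten and Sobolev bounds for the fixed operator $(i-D^2)^{-1}$, multiplied by $t^{-1}+|z/t-i|\cdot C_1$. Your approach, by contrast, is more hands-on: you factor as $(z-tD_b^2)^{-1}(1+D_b^2)^{1/2}\cdot(1+D_b^2)^{-1/2}$ (resp.\ pull out $(1+D_b^2)$ via the elliptic estimate) and control the first factor eigenvalue by eigenvalue. This is slightly longer but entirely self-contained and makes the dependence on $|z|/t$ transparent without any algebraic manipulation. Both proofs ultimately rest on the same two analytic inputs (Weyl asymptotics and elliptic regularity, uniform over $b$), and your remark that uniformity in $b\in N_\eps$ is the only genuine subtlety matches how the paper handles it implicitly via compactness of $B_0$ and the trivialization of Lemma~\ref{trivpiv}.
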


\begin{prf}
 (\ref{absch1}) follows from the choice of the contours $\Gamma_t$ and $\Omega_t$. \newline
(\ref{absch2}) and (\ref{absch4}) follow as in \cite[Proposition 7.2]{BismutGoette} by writing
\[ \left(z-tD^2\right)^{-1}=  t^{-1}\left(i-D^2\right)^{-1} - \left(i-D^2\right)^{-1} \left(\frac{z}{t} -i\right)\left(z-tD^2\right)^{-1} .\]
We then use the well-known facts that there exist constants such that
\[ \left\Vert \left(i-D^2\right)^{-1}\right\Vert_p \leq C \]
for $k\geq \dim M_b +1$, this follows for example by  \cite[Remark 5.32, Proposition 8.9]{Roe}, and
\[ \left\Vert \left(i-D^2\right)^{-1}\right\Vert_{0,2}\leq C \]
see \cite[Equation (7.7)]{BismutGoette}. Together with estimate (\ref{absch1}) these prove the claimed inequalities (\ref{absch2}) and (\ref{absch4}).
\end{prf}

\begin{prop}\label{1-P}
On the tubular neighbourhood $N_{\eps}\cong B_0\times \left(-\varepsilon, \varepsilon\right)$ of $B_0$ in $B$ there exist for all $\ell\geq 0$ constants $c,C>0$ and a polynomial $f \in \R[t, t^{-1}]$ such that
\begin{equation}
\left\Vert \tr^{\odd} \left( (1-\PP)\left( \exp\left( -\A^2_t\right) \right)\right)\right\Vert_{\mathcal{C}^{\ell}(N)} \leq  c f(t) \exp\left(-Ct\right).
\end{equation}

\end{prop}

\begin{prf}
We will first prove the statement for $\ell=0$.\newline
By the definition of the operator $E_t$ and since $B$ is compact we know that
\[ \left\Vert E_t\right\Vert_{2,0}\leq C\sqrt{t} .\]
Combining this with the estimates (\ref{absch2}) and (\ref{absch4}) we get
\begin{align*}
\left\Vert \left( z-\A_t^2\right)^{-p}\right\Vert_1 &\leq \left\Vert\left(z-\A_t^2\right)^{-1}\right\Vert^p_p \\
& \leq \left( \sum\limits_{n=0}^{\dim B} \left\Vert \left(z-tD^2\right)^{-1}\right\Vert_{0,2} \left\Vert E_t\right\Vert^n_{2,0} \left\Vert \left(z-tD^2\right)^{-1}\right\Vert^n_p  \right)^p \\
&\leq \left( \sum\limits_{n=0}^m C \left(1+\frac{|z|}{t}\right) t^{n/2} \right)^p \\
& \leq C \left(1+\frac{|z|}{t}\right)^p t^{mp/2},
\end{align*}
where $m=\dim B$ and constants $C$ varying from line to line. It follows that
\begin{align*}
&\left\Vert \tr^{\odd}\left((1-\PP)\left(\exp\left(-\A_t^2 \right)\right)\right)\right\Vert_{\mathcal{C}^0}\\
 &\leq \left\Vert(1-\PP)\left(\exp\left( -\A_t^2\right)\right)\right\Vert_{1} \\
& = \left\Vert \frac{1}{2\pi i} \int\limits_{\Gamma_t} \frac{\exp (-z)}{z-\A_t^2} dz \right\Vert_1 \\
&= \frac{1}{2\pi p!} \left\Vert \int\limits_{\Gamma_t} \frac{\exp (-z)}{\left( z-\A_t^2\right)^p} dz \right\Vert_1 \\
& \leq \frac{1}{2\pi k!} \int\limits_{\Gamma_t} \left\vert\exp(-z)\right\vert C \left(1+\frac{\left\vert z\right\vert}{t}\right)^p t^{mp/2} dz\\
& \leq C f(t) \exp (-Kt),
\end{align*}
where $f\in\R[t,t^{-1}]$.\newline
For $\ell \geq 1$ we first see by the same argument as above and Lemma \ref{bla} that
\begin{align*}
\left\Vert \left(z-\A_t^2\right)^{-1}\right\Vert_{0,2} & \leq \sum_{n=0}^m \left\Vert\left(z-tD^2\right)^{-1}\right\Vert_{0,2} \left\Vert E_t\right\Vert^n_{2,0}\left\Vert\left( z-tD^2\right)^{-1}\right\Vert_{0,2}^n \\
&\leq C t^{m/2}\left(1+\frac{\left\vert z\right\vert}{t}\right)^m .
\end{align*}
Let $\n$ be any connection on $\pi_* V$. We know by \cite[Lemma 9.15]{BGV} that for a family of smoothing operators $K$, $d\tr\left( K\right)=\tr\left( \n(K)\right)$ independent of the connection and by \cite[Theorem 9.51]{BGV} $\exp\left(-\A_t^2\right) $ is a family of smoothing operators. For local coordinates $y_1,..., y_m$ on $B$ it is clear that
\[  \left\Vert \n_{\frac{\del}{\del y_{i_1}}} ...\n_{\frac{\del}{\del y_{i_k}}}  \left(\A_t^2\right) \right\Vert_{2,0}\leq C t .\]
Now by using 
\begin{equation*}
\n_{\frac{\del}{\del y_i}} \left(z-\A_t^2\right)^{-1}= \left(z-\A_t^2\right)^{-1}\n_{\frac{\del}{\del y_i}}\left(\A_t^2\right)\left(z-\A_t^2\right)^{-1} 
\end{equation*}
one can prove that
\begin{align*}
\left\Vert \n_{\frac{\del}{\del y_i}}\left(z-\A_t^2\right)^{-1}\right\Vert_p & \leq \left\Vert \left(z-\A_t^2\right)^{-1}\right\Vert_p \left\Vert \n_{\frac{\del}{\del y_i}}\left(\A_t^2\right)\right\Vert_{2,0} \left\Vert\left(z-\A_t^2\right)^{-1}\right\Vert_{0,2} \\
& \leq C p_1\left(\left\vert z\right\vert\right) p_2\left( t\right)
\end{align*}
for $p_1$ a polynomial in $\left\vert z\right\vert$ and $p_2$ a polynomial in $t$ and $t^{-1}$. It follows inductively that
\[ \left\Vert \n_{\frac{\del}{\del y_{i_1}}}...\n_{\frac{\del}{\del y_{i_k}}} \left(z-\A_t^2\right)^{-1}\right\Vert_p \leq C p_1\left(\left\vert z\right\vert\right) p_2\left(t\right) .\]
This finishes the proof as in the case $\ell =0$ above.

\end{prf}

\begin{defn}
We define the functions $g$, $f_t$ and $i$ to be
\begin{align*}
g\colon B_0 \times (-\eps\sqrt{t}, \eps\sqrt{t})& \rightarrow B_0 , (x,y)\mapsto x , \\ 
f_t\colon B_0 \times (-\eps\sqrt{t}, \eps\sqrt{t})& \rightarrow B_0 \times(-\eps,\eps), (x,y)\mapsto \left(x, \frac{y}{\sqrt{t}}\right)
\end{align*}
and
\begin{equation*}
i\colon B_0 \rightarrow B_0 \times (-\eps,\eps), x \mapsto (x,0).
\end{equation*}
For $y\in (-\eps\sqrt{t}, \eps\sqrt{t})$ and $\left\vert y\right \vert \geq 1$ the contour $\Theta_y \subset\CC$ is defined to be
\begin{center}
 \begin{tikzpicture}
  \draw [->](-1.5,0)  -- (9,0) node(xline)[right]{Re};
\draw[->](0,-1.5)--(0,1.5) node(yline)[above]{Im};  
 \node[left] at (0,1) {$i$};
  \node [left]at (0,-1) {$-i$};
  \node[anchor=south east] at ( 1,0) {$\frac{y^2}{2} $};
  \node[anchor=south east] at(4,0){$K'y^2$};
  \node[above]at(2,0) {$y^2$};
  \draw[->] (1,1)--(1,-0.5);
  \draw (1,-0.5)--(1,-1);
  \draw [->](4,1) node[above]{$\Theta_y $}--(2,1);
  \draw(2,1)--(1,1);
  \draw[->](1,-1)--(2,-1);
  \draw(2,-1)--(4,-1);
  \draw[->](4,-1)--(4,-0.5);
  \draw(4,-0.5)--(4,1);
 \end{tikzpicture}
 \end{center}
such that it contains the small eigenvalue of $t D^2_{(x,y/\sqrt{t})}$ for all $x\in B_0$. Then we can write the spectral projection $\PP$ also as
\begin{equation}
\PP\left(\exp\left(-f_t^*\A_t^2\right)_{(x,y)}\right)=\frac{1}{2\pi i} \int\limits_{\Theta_y} \exp (-z) \left(z-f_t^*\A_t^2\right)^{-1} dz
\end{equation}
as well as
\begin{equation}
\exp\left(-\left(y+\n^{\ker}\right)^2\right)=\frac{1}{2\pi i} \int\limits_{\Theta_y}\exp(-z) \left(z- \left( y+\n^{\ker}\right)^2\right)^{-1} dz.
\end{equation}
\end{defn}

\begin{rem}
It is clear by the definition of the contour $\Theta_y$ that the estimates in Lemma \ref{bla} also hold for $z\in\Theta_{y\sqrt{t}}$.
\end{rem}

\begin{lemma}\label{zurueckholen}
If $\omega$ is a differential form on $B$ with support in $B_0\times (-\eps,\eps)$ and $\alpha$ a multiindex of length $\ell$ then
\begin{equation}
\left\vert D^{\alpha} \left((i \circ g)^* \omega- f_t^* \omega\right)_{(x,y)} \right\vert \leq \frac{C}{\sqrt{t}} \left\Vert\omega\right\Vert_{\mathcal{C}^{\ell +1}(B)} \left(1+\left\vert y\right\vert \right). 
\end{equation}
\end{lemma}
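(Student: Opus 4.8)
The plan is to interpolate between the two maps in the normal direction. Since $(i\circ g)(x,y)=(x,0)$ and $f_t(x,y)=(x,y/\sqrt t)$ both land in $N_\eps\cong B_0\times(-\eps,\eps)$ and $\supp\omega$ is contained there, only the restriction of $\omega$ to $N_\eps$ enters the computation. Writing $\omega=\mu+du\wedge\nu$ in the product coordinates $(x,u)$ on $N_\eps$, with $u$ the normal coordinate and $\mu,\nu$ $u$-dependent families of forms on $B_0$, one has $(i\circ g)^*\omega=\mu(\,\cdot\,,0)$ and $f_t^*\omega=\mu(\,\cdot\,,y/\sqrt t)+\frac{1}{\sqrt t}\,dy\wedge\nu(\,\cdot\,,y/\sqrt t)$, hence
\[ (i\circ g)^*\omega-f_t^*\omega=\bigl(\mu(x,0)-\mu(x,y/\sqrt t)\bigr)-\frac{1}{\sqrt t}\,dy\wedge\nu(x,y/\sqrt t). \]
For the first summand the fundamental theorem of calculus in the normal variable gives $\mu(x,0)-\mu(x,y/\sqrt t)=-\frac{y}{\sqrt t}\int_0^1(\partial_u\mu)(x,sy/\sqrt t)\,ds$, which already exhibits one factor $t^{-1/2}$, one power of $y$, and one normal derivative of $\omega$; the second summand carries an explicit factor $t^{-1/2}$.

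Next I would apply $D^\alpha$ with $|\alpha|=\ell$ and estimate term by term using the chain and product rules. Every derivative in a $B_0$-direction only differentiates coefficient functions of $\omega$; every derivative in the $y$-direction landing on a rescaled argument $sy/\sqrt t$ or $y/\sqrt t$ produces an extra factor $t^{-1/2}$, which is harmless for $t\ge1$; and the product rule applied to the explicit prefactor $y$ either leaves it in place (retaining the single power of $y$, whence the factor $1+|y|$) or replaces it by a constant. Because $|y|\le\eps\sqrt t$ forces $|sy/\sqrt t|,|y/\sqrt t|<\eps$, all arguments remain inside $N_\eps$, and no term ever applies more than $\ell+1$ derivatives to the coefficients of $\omega$, the extra one being the normal derivative $\partial_u$ already present in the integral representation. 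Collecting the finitely many terms, and taking the maximum of the resulting constants over a finite atlas of charts for the compact manifold $B_0$, gives $|D^\alpha((i\circ g)^*\omega-f_t^*\omega)_{(x,y)}|\le\frac{C}{\sqrt t}\|\omega\|_{\mathcal C^{\ell+1}(B)}(1+|y|)$.

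The only genuine point to monitor is the precise loss of one derivative together with the linearity in $|y|$: the integral representation produces exactly one normal derivative and exactly one power of $y$, while every subsequent differentiation can only lower that power of $y$ or supply a further $t^{-1/2}$, so the $\mathcal C^{\ell+1}$-norm and the factor $1+|y|$ are what is needed and nothing worse appears. I expect this bookkeeping to be the only obstacle; there is no analytic difficulty, since everything reduces to a Taylor expansion of the smooth coefficients of $\omega$ in the normal direction.
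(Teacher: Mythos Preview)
Your argument is correct and is precisely the ``straight-forward calculation'' the paper alludes to; the paper gives no further details beyond referring to \cite[Eq.~(3.107)]{BismutComplex}. Your decomposition $\omega=\mu+du\wedge\nu$, the fundamental theorem of calculus in the normal variable, and the subsequent chain-rule bookkeeping are exactly what is needed, and your observation that at most one extra normal derivative appears (hence the $\mathcal{C}^{\ell+1}$-norm) is the right point to track.
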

\begin{proof}
This follows by a straight-forward calculation and the mean value theorem, see also \cite[Eq. (3.107)]{BismutComplex} for the statement.
\end{proof}

\begin{lemma}\label{lemma}
Let $(x,y)\in B_0\times (-\eps,\eps) $ and $z \in \Omega_t$ or $z\in\Theta_{y\sqrt{t}}$, $\eps$ small enough and $t$ big enough. By abuse of notation we write $\left(D^+_{(x,y)}\right)^{-1}$ instead of $\left(D_{(x,y)}^+\right)^{-1} Q_{(x,y)}$. Then the following inequalities hold
\begin{align*}
\left\Vert \left( z-t\left(D_{(x,y)}^+\right)^2\right)^{-1}\right\Vert_{0,2} &\leq \frac{C}{t}\left(1+\left\vert z \right\vert \right), \\
\left\Vert \left(z-t \left(D^+_{(x,y)}\right)^2 \right)^{-1} + t^{-1} \left(D_{(x,0)}^+\right)^{-2}  \right\Vert_{0,2} & \leq C t^{-1} \left( \left\vert y\right\vert + t^{-1} \left\vert z\right\vert+ t^{-1} \left\vert z \right\vert^2\right).
\end{align*}

\end{lemma}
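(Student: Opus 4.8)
The plan is to treat both inequalities as resolvent estimates for the invertible ``large'' part $D^+_{(x,y)}=D_{(x,y)}Q_{(x,y)}$, which by Assumption~\ref{assumption} and the choice of $\eps$ has a uniform spectral gap: the eigenvalues $\lambda_k(x,y)$, $k\neq0$, satisfy $\lambda_k^2(x,y)\geq\eps^2+\tilde K$, so $\sigma\big((D^+_{(x,y)})^2\big)\subset[\eps^2+\tilde K,\infty)$. I work throughout in the trivialization of Lemma~\ref{trivpiv}, so that the operators $D_{(x,y)}$, the spectral projections $Q_{(x,y)}$ and the $D$-invariant subspaces $W_{(x,y)}=\im Q_{(x,y)}$ all live inside the fixed Hilbert space $L^2(M_x,V_x)$, with Sobolev norms equivalent uniformly in $y$; recall that $(D^+_{(x,y)})^{-1}$ and $(D^+_{(x,y)})^{-2}$ abbreviate the operators inverting $D^+_{(x,y)}$, resp. $(D^+_{(x,y)})^2$, on $W_{(x,y)}$ and vanishing on its orthogonal complement. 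The engine is the elementary identity $(z-A)^{-1}=-A^{-1}+z\,A^{-1}(z-A)^{-1}$, applied with $A=t(D^+_{(x,y)})^2$, which exhibits $-t^{-1}(D^+_{(x,y)})^{-2}$ as the leading term of the resolvent.

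First I would record two uniform ingredients. Elliptic regularity for $D_{(x,y)}$ (which commutes with $Q_{(x,y)}$, hence preserves $W_{(x,y)}$), the uniform gap, and the compactness of $B_0\times[-\eps,\eps]$ after shrinking $\eps$ give constants with $\Vert(D^+_{(x,y)})^{-2}\Vert_{0,2}\leq C$ and $\Vert(D^+_{(x,y)})^{-2}\Vert_{2,2}\leq C$. For the resolvent, $\sigma\big(t(D^+_{(x,y)})^2\big)\subset[t(\eps^2+\tilde K),\infty)$, whereas for $z\in\Omega_t$ one has $\RE z\leq Kt=t(\eps^2+\tilde K/2)$, and for $z\in\Theta_{y\sqrt t}$ with $\eps$ small the same type of bound holds by the remark preceding the lemma; hence $\operatorname{dist}\big(z,\sigma(t(D^+_{(x,y)})^2)\big)\gtrsim t$ and $\Vert(z-t(D^+_{(x,y)})^2)^{-1}\Vert_{0,0}\leq C/t$. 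Feeding these into the $\Vert\cdot\Vert_{0,2}$-norm of the identity
\[ \big(z-t(D^+_{(x,y)})^2\big)^{-1}=-t^{-1}(D^+_{(x,y)})^{-2}+z\,t^{-1}(D^+_{(x,y)})^{-2}\big(z-t(D^+_{(x,y)})^2\big)^{-1} \]
together with $\Vert(D^+)^{-2}(z-t(D^+)^2)^{-1}\Vert_{0,2}\leq\Vert(D^+)^{-2}\Vert_{0,2}\,\Vert(z-t(D^+)^2)^{-1}\Vert_{0,0}$ gives the first estimate $\Vert(z-t(D^+_{(x,y)})^2)^{-1}\Vert_{0,2}\leq Ct^{-1}(1+|z|)$ at once.

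For the second estimate I would add $t^{-1}(D^+_{(x,0)})^{-2}$ to the same identity, so that the left-hand side becomes the quantity to be bounded and the right-hand side splits as $t^{-1}\big((D^+_{(x,0)})^{-2}-(D^+_{(x,y)})^{-2}\big)$ plus $z\,t^{-1}(D^+_{(x,y)})^{-2}\big(z-t(D^+_{(x,y)})^2\big)^{-1}$. The second summand is estimated by $\Vert(D^+_{(x,y)})^{-2}\Vert_{2,2}$ times the already-proved first estimate, yielding $Ct^{-1}\big(t^{-1}|z|+t^{-1}|z|^2\big)$. The first summand needs the Lipschitz bound $\Vert(D^+_{(x,0)})^{-2}-(D^+_{(x,y)})^{-2}\Vert_{0,2}\leq C|y|$, after which it contributes $Ct^{-1}|y|$ and the two pieces combine to $Ct^{-1}\big(|y|+t^{-1}|z|+t^{-1}|z|^2\big)$, as claimed.

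I expect the Lipschitz-in-$y$ bound to be the only non-routine point. It rests on the fact that, in the trivialization of Lemma~\ref{trivpiv}, the family $(x,y)\mapsto D_{(x,y)}$ is smooth, hence so are the spectral projections $Q_{(x,y)}$ (by holomorphic functional calculus around a fixed contour separating the small eigenvalue from $\sigma(D^+_{(x,y)})$, which is possible precisely because of the uniform gap) and therefore the compressed inverses $(D^+_{(x,y)})^{-2}$, smoothness being understood in the $\Vert\cdot\Vert_{0,2}$-operator topology thanks to the uniform elliptic $L^2\to H^2$ bound. Granting this, the mean value theorem applied on the compact set $B_0\times[-\eps,\eps]$ (after shrinking $\eps$) delivers the $C|y|$ estimate and completes the proof.
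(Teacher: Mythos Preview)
Your proof is correct and follows essentially the same route as the paper: both arguments rest on the resolvent identity $(z-A)^{-1}+A^{-1}=zA^{-1}(z-A)^{-1}$ with $A=t(D^+_{(x,y)})^2$ together with the uniform bound $\Vert(D^+)^{-2}\Vert_{0,2}\leq C$, and both split the second estimate into the Lipschitz-in-$y$ term $t^{-1}\bigl((D^+_{(x,0)})^{-2}-(D^+_{(x,y)})^{-2}\bigr)$ (for which the paper cites \cite{Ruzicka}) and the $z$-term controlled by the first inequality. The only difference is a small simplification on your side: the paper writes $(z-t(D^+)^2)^{-1}=-t^{-1}\bigl(1-\tfrac{z}{t}(D^+)^{-2}\bigr)^{-1}(D^+)^{-2}$ and bounds the middle factor by $C(1+|z|)$ through a case distinction on $|\IM z|$, whereas your direct spectral-gap estimate $\Vert(z-t(D^+)^2)^{-1}\Vert_{0,0}\leq C/t$ (valid because $\RE z\leq Kt$ on $\Omega_t$ and $\RE z\leq K'ty^2\leq K'\eps^2 t$ on $\Theta_{y\sqrt t}$, both bounded away from the spectrum $\geq t(\eps^2+\tilde K)$) bypasses that case analysis.
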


\begin{prf}
The proof follows the ideas of the proof of \cite[Proposition 3.4]{BismutComplex}. Our constants $C>0$ may vary from line to line but they are all indepenent of $t,y$ and $z$ and since $B_0$ is compact also of $x$. \newline
For the first estimate we write
\begin{equation}\label{1} 
\left(z-t\left(D^+_{(x,y)}\right)^2\right)^{-1}= -t^{-1}\left(1-\frac{z}{t}\left(D^+_{(x,y)}\right)^{-2}\right)^{-1} \left(D_{(x,y)}^+\right)^{-2}.
\end{equation}
As in \cite[Eq. (3.37)]{BismutComplex} we know that for $\left\vert \IM z\right\vert=1$
\begin{align}
\left\Vert \left(1-\frac{z}{t} \left(D^+_{(x,y)}\right)^{-2}\right)^{-1} \right\Vert_{0,0}&\leq \sup_{x\in\R} \left\vert 1-x z\right\vert^{-1} \\
& = \frac{1}{\inf_{x\in\R}\left\vert 1-xz\right\vert} \\
&= \left\vert z\right\vert .
\end{align}
If $\left\vert\IM z\right\vert < 1$ we know that either $\RE z = Kt$, $\RE z = -1$ or $\RE z = Cty^2$. We find a constant $C>0$ such that for $t$ big enough in each of these three cases
\begin{equation}
\left\Vert \frac{\RE z}{t}\left(D^+_{(x,y)}\right)^{-2}\right\Vert_{0,0}\leq C \eps^2,
\end{equation}
in particular for $\eps$ small enough
\begin{equation}
\left\Vert \frac{\RE z}{t} \left(D^+_{(x,y)}\right)^{-2}\right\Vert_{0,0}\leq \frac{1}{2}
\end{equation}
and therefore
\begin{equation}
\left\Vert \left(1-\frac{z}{t}\left(D^+_{(x,y)}\right)^{-2}\right)^{-1}\right\Vert_{0,0}\leq 2.
\end{equation}
So for all $z$ in the contours $\Omega_t$ and $\Theta_{y\sqrt{t}}$ the inequality
\begin{equation}
\left\Vert \left(1- \frac{z}{t}\left(D^+_{(x,y)}\right)^{-2}\right)^{-1}\right\Vert_{0,0}\leq C \left(1+\left\vert z\right\vert\right)
\end{equation}
holds true. Also for $\eps$ small enough we find a constant $C>0$ sucht that for all $(x,y)\in B_0\times (-\eps,\eps)$
\begin{equation}
\left\Vert \left(D^+_{(x,y)}\right)^{-2}\right\Vert_{0,2}\leq C .
\end{equation}
Inserting this into equation (\ref{1}) leads to
\begin{equation}
\left\Vert \left(z-t\left(D^+_{(x,y)}\right)^2\right)^{-1} \right\Vert_{0,2}\leq \frac{C}{t}\left(1+\left\vert z\right\vert\right)
\end{equation}
which completes the first part of the lemma. \newline
For the second inequality of the lemma we write
\begin{align*}
& \left\Vert \left(z-t\left(D^+_{(x,y)}\right)^2\right)^{-1}+t^{-1}\left(D_{(x,y)}^+\right)^{-2}\right\Vert_{0,2} \\
& \leq \left\Vert \left(z-t\left(D^+_{(x,y)}\right)^2\right)^{-1} \frac{z}{t} \left(D^+_{(x,y)}\right)^{-2} \right\Vert_{0,2} + \left\Vert t^{-1} \left(D^+_{(x,0)}\right)^{-2} - t^{-1}\left( D_{(x,y)}^+\right)^{-2} \right\Vert_{0,2}
\end{align*}
By \cite[Satz 2.8]{Ruzicka} we know that
\begin{equation}
t^{-1}\left\Vert \left(D^+_{(x,0)}\right)^{-2} - \left(D^+_{(x,y)}\right)^{-2}\right\Vert_{0,2}\leq \frac{C}{t} \left\vert y\right\vert 
\end{equation}
and by using the first part we have
\begin{equation}
\left\Vert \left(z-t\left(D^+_{(x,y)}\right)^2\right)^{-1} \frac{z}{t} \left(D^+_{(x,y)}\right)^{-2}\right\Vert_{0,2}\leq \frac{C}{t^2}\left(\left\vert z\right\vert + \left\vert z\right\vert^2\right).
\end{equation}
Combing these leads to
\begin{equation}
\left\Vert \left(z-t\left(D^+_{(x,y)}\right)^2\right)^{-1}-t^{-1}\left(D^+_{(x,0)}\right)^{-2}\right\Vert_{0,2}\leq \frac{C}{t}\left(\left\vert y\right\vert + t^{-1}\left\vert z\right\vert + t^{-1}\left\vert z\right\vert^2\right)
\end{equation}
which completes the second part of the lemma.  
\end{prf}

\begin{prop}[{\cite[Proposition 3.5]{BismutComplex}}] \label{zushg}
For $x\in B_0$ and $X\in T_x B$ 
\begin{equation}
\n^{\pi_* V}_X - \n^{L\oplus W}_X = \begin{pmatrix}
0 & P  \n^{\pi_*V}_X (D) Q \left(D^+\right)^{-1}\\
- \left(D^+\right)^{-1} Q \n^{\pi_*V}_X (D) P & 0 
\end{pmatrix}
\end{equation}
with respect to the decomposition $\left.\pi_* V\right|_{B_0} = \ker D \oplus \im D$. Therefore 
\begin{equation}
\left(\n^{\ker}\right)^2_x= P\left(\n^{\pi_* V}\right)^2P - P \n^{\pi_* V}(D) \left(D^{+}\right)^{-2}  \n^{\pi_* V}(D) P .
\end{equation}
\end{prop}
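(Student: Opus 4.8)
This is \cite[Proposition 3.5]{BismutComplex} transported to the present (infinite-rank) setting, and the plan is to deduce both formulae from the algebraic relations satisfied by $D$, $P$ and $Q$ near $B_0$, differentiated along $\n^{\pi_*V}$. I begin with the elementary observation that, since $\n^{L\oplus W}_X=P\n^{\pi_*V}_X P\oplus Q\n^{\pi_*V}_X Q$ and $1=P+Q$, the difference $\theta_X:=\n^{\pi_*V}_X-\n^{L\oplus W}_X$ equals $P\n^{\pi_*V}_X Q+Q\n^{\pi_*V}_X P$; thus $\theta$ is exactly the off-diagonal part of $\n^{\pi_*V}$ with respect to the splitting $\pi_*V|_{B_0}=\ker D\oplus\im D$, and its two diagonal blocks vanish. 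So the whole content of the first identity is the evaluation of the $(1,2)$- and $(2,1)$-blocks.

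Next I would differentiate the spectral relations. On $N_\eps$ the operator $D$ commutes with its spectral projection $P$ (hence also with $Q$), so $QDP=0$ and $PDQ=0$ there; moreover $DP=yP$ with $y|_{B_0}=0$, so on $B_0$ one has $DP=0$, $PD=0$, $DQ=QD=D$, and $D$ restricts to the invertible operator $D^+$ on $\im D$. Applying the Leibniz rule for products of operator-valued sections to $QDP=0$ and restricting to $B_0$ makes the $DP$-term drop out and leaves $Q\n^{\pi_*V}_X(D)P+D\,\n^{\pi_*V}_X(P)=0$; since $\n^{\pi_*V}_X(P)$ has vanishing diagonal blocks (because $P^2=P$), reading off the component that maps $\ker D$ into $\im D$, on which $D$ acts as $D^+$, yields $Q\n^{\pi_*V}_X P=-(D^+)^{-1}Q\n^{\pi_*V}_X(D)P$. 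Symmetrically, differentiating $PDQ=0$ and restricting to $B_0$ kills the $PD$-term and gives $(P\n^{\pi_*V}_X Q)\,D^+=P\n^{\pi_*V}_X(D)Q$ on $\im D$, hence $P\n^{\pi_*V}_X Q=P\n^{\pi_*V}_X(D)Q(D^+)^{-1}$. Together these are the asserted matrix.

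For the curvature statement I would feed the first identity into the standard decomposition $\left(\n^{\pi_*V}\right)^2=\left(\n^{L\oplus W}\right)^2+d^{\n^{L\oplus W}}\theta+\theta\wedge\theta$ and take the $(1,1)$-block of both sides with respect to $\ker D\oplus\im D$: the first term contributes $\left(\n^{\ker}\right)^2$, the term $d^{\n^{L\oplus W}}\theta$ stays off-diagonal and contributes nothing, and the $(1,1)$-block of $\theta\wedge\theta$ is the composition of the two off-diagonal entries found above. Substituting those entries and absorbing the intermediate projections --- using $Q(D^+)^{-1}=(D^+)^{-1}=(D^+)^{-1}Q$ --- collapses that composition to $P\n^{\pi_*V}(D)(D^+)^{-2}\n^{\pi_*V}(D)P$ up to sign, and rearranging $P\left(\n^{\pi_*V}\right)^2P=\left(\n^{\ker}\right)^2+[\theta\wedge\theta]_{(1,1)}$ gives the stated formula for $\left(\n^{\ker}\right)^2_x$.

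Everything here is elementary, and the places I would be most careful are: (i) justifying the Leibniz manipulations, which requires that $P$, $Q$, $D^+$ and $(D^+)^{-1}$ are genuine smooth operator-valued sections over $N_\eps$ --- this is exactly what the spectral-gap and constant-rank statements established above provide (cf. \cite[Proposition 9.10]{BGV}), and the identities should be read in the appropriate operator norms so that there is no ambiguity; (ii) the sign and parity bookkeeping around $\n^{\pi_*V}(D)$, namely that it is the covariant derivative $[\n^{\pi_*V},D]$ of the endomorphism-valued section $D$, together with the effect of $D$ anticommuting with one-forms when one contracts $[\n^{\pi_*V},D]$ with a vector field and when one forms $\theta\wedge\theta$; and (iii) remembering that $X$ ranges over all of $T_xB$, including the normal direction to $B_0$, so that one genuinely needs the extensions of $D$, $P$, $Q$ to the tubular neighbourhood $N_\eps$ and not only their restrictions to $B_0$. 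None of this is deep; the only real risk lies in the sign bookkeeping of (ii).
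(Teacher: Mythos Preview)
The paper does not give its own proof of this proposition; it is quoted verbatim from \cite[Proposition~3.5]{BismutComplex} and then used in the proof of Proposition~\ref{prop1}. Your argument is the standard one and is correct: identify $\theta:=\n^{\pi_*V}-\n^{L\oplus W}$ as the off-diagonal part, differentiate $QDP=0$ and $PDQ=0$, use $DP=PD=0$ on $B_0$ to drop one term in each, invert $D^+$ on $W$, and then take the $(1,1)$-block of $(\n^{L\oplus W}+\theta)^2$ for the curvature identity. The smoothness you need in (i) is exactly what \cite[Proposition~9.10]{BGV} provides, as you say.

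Your caution in (ii) is well placed and is in fact the only delicate point. If you compute $[\theta\wedge\theta]_{(1,1)}=\theta_{12}\theta_{21}$ by naively multiplying components you get $-P\n^{\pi_*V}(D)(D^+)^{-2}\n^{\pi_*V}(D)P$, which would give the wrong sign in the curvature formula. The resolution is precisely the anticommutation you mention: in the paper's convention $c(e_i)$ anticommutes with $dy^\alpha$, so $\n^{\pi_*V}_\alpha(D)=[\n^{\pi_*V}_\alpha,D]$ has odd Clifford degree and anticommutes with $dy^\beta$. Hence when you expand
\[
P\,\n^{\pi_*V}(D)\,(D^+)^{-2}\,\n^{\pi_*V}(D)\,P
=\sum_{\alpha,\beta} dy^\alpha\, P\n^{\pi_*V}_\alpha(D)(D^+)^{-2}\,dy^\beta\,\n^{\pi_*V}_\beta(D)P
\]
an extra sign appears from moving $dy^\beta$ past $\n^{\pi_*V}_\alpha(D)(D^+)^{-2}$, and this flips the sign back to match the stated formula. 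So the printed minus is correct once the Clifford grading is tracked; you should make that step explicit in your write-up.
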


\begin{prop}\label{prop1}
We define for $\left(x,y\right)\in B_0 \times \left(-\varepsilon\sqrt{t},\varepsilon\sqrt{t}\right)$, $z\in\Omega_t$ or $\Theta_y$ the operator $\alpha$ by
 \begin{align*}
& \left.\left( P f_t^* E_t P + P f_t^* E_t Q \left(z-t f_t^* D^2\right)^{-1} Q E_t P \right)\right|_{(x,y)}\\
& = \left. g^* \left( dy + \left(\n^{\ker}\right)^2 \right)\right|_{(x,y)}+ \alpha\left(x,y,z,t\right)
\end{align*}
where we identify $L_{(x,y/\sqrt{t})}$ and $\ker D_{(x,0)}$ by parallel transport along the geodesic $s\mapsto (x, s y /\sqrt{t})$ with respect to $\n^L$. Then there exists a constant $C>0$ such that for $t$ big enough
\begin{equation}
\left\Vert \alpha\left(x,y,z,t\right) \right\Vert_{2,0} \leq C t^{-1/2}\left(1+\left\vert y\right\vert + \left\vert z \right\vert + \left\vert z \right\vert^2\right) .
\end{equation}
\end{prop}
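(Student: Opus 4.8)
The plan is to adapt the method of \cite[Propositions 3.4 and 3.5]{BismutComplex}, decomposing $f_t^*E_t$ into the five summands of its definition and bounding each resulting contribution in $\Vert\cdot\Vert_{2,0}$. Write $p=y/\sqrt t\in(-\eps,\eps)$, so that $f_t^*E_t|_{(x,y)}$ is $E_t$ evaluated at $(x,p)$, with the $B_0$--forms unchanged and the normal $1$--form divided by $\sqrt t$ (recall $f(x,y)=y$, so this $1$--form is $df$). I use repeatedly that on $B_0\times(-\eps\sqrt t,\eps\sqrt t)$ one has $|y|<\eps\sqrt t$, so $|y|^2/t\le\eps\,t^{-1/2}|y|$, and that $t^{-1}$, $t^{-1}|z|$, $t^{-1}|z|^2$ are bounded by $t^{-1/2}$ times the respective quantity for $t$ large; this is what keeps all remainders below $C\,t^{-1/2}(1+|y|+|z|+|z|^2)$ uniformly in $z\in\Omega_t\cup\Theta_y$. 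I also use that $P$ commutes with $D$ and that $D$ restricted to $L$ is multiplication by the eigenvalue $f$, whence $PD=DP=fP$ on $N_\eps$ and (via Proposition \ref{zushg}, since $\n^{\pi_* V}-\n^{L\oplus W}$ is off-diagonal and $D$ is block-diagonal) $P\n^{\pi_* V}(D)P=df\cdot P$ exactly.

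For the diagonal block $Pf_t^*E_tP$: the summand $\sqrt t\,Pf_t^*[D,\n^{\pi_* V}]P=\sqrt t\,f_t^*(df\cdot P)=dy\cdot P_{(x,p)}$ (the factor $\sqrt t$ being cancelled by $f_t^*df=dy/\sqrt t$), which under the identification of $L_{(x,p)}$ with $\ker D_{(x,0)}$ prescribed in the statement is exactly $g^*(dy)$ --- no error. The summand $-\frac14Pf_t^*[D,c(T)]P$ equals $-\frac12 p\,Pf_t^*c(T)P$ (using $PD=DP=fP$ once more), hence is $O(t^{-1/2}|y|)$ because $Pf_t^*c(T)P$ is bounded. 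The summand $Pf_t^*(\n^{\pi_* V})^2P$ equals $P(\n^{\pi_* V})^2P|_{(x,0)}$ up to $O(t^{-1/2}(1+|y|))$: its $B_0$--component varies from its value at $(x,0)$ by $O(p)$, its mixed $df$--component carries a factor $1/\sqrt t$, and replacing the parallel transport of Lemma \ref{trivpiv} by the $\n^L$--transport of the statement costs a further $O(p)$ over a geodesic of length $|p|$. The last two summands of $E_t$ carry explicit factors $1/\sqrt t$ and $1/t$ and are $O(t^{-1/2})$ in $\Vert\cdot\Vert_{2,0}$. Hence $Pf_t^*E_tP=g^*(dy)+P(\n^{\pi_* V})^2P|_{(x,0)}+O(t^{-1/2}(1+|y|))$.

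For the off-diagonal block $Pf_t^*E_tQ\,(z-tf_t^*D^2)^{-1}Q\,f_t^*E_tP$: on $W$ the middle factor is $\bigl(z-t(D^+_{(x,p)})^2\bigr)^{-1}$, which by Lemma \ref{lemma} differs from $-t^{-1}(D^+_{(x,0)})^{-2}$ by $O\bigl(t^{-1}(|p|+t^{-1}|z|+t^{-1}|z|^2)\bigr)$ in $\Vert\cdot\Vert_{0,2}$; since $\Vert Pf_t^*E_tQ\Vert_{2,0}$ and $\Vert Qf_t^*E_tP\Vert_{2,0}$ are $\le C\sqrt t$ (as in the proof of Proposition \ref{1-P}, where $\Vert E_t\Vert_{2,0}\le C\sqrt t$), composing gives a resolvent-replacement error $\le C\,t^{-1/2}(|y|+|z|+|z|^2)$. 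In the remaining term $-t^{-1}Pf_t^*E_tQ\,(D^+_{(x,0)})^{-2}\,Qf_t^*E_tP$, replace each $f_t^*E_t$ by its leading summand $\sqrt t\,f_t^*\n^{\pi_* V}(D)$; the discarded parts are $O(1)$ in $\Vert\cdot\Vert_{2,0}$, so after the two factors $\sqrt t$ cancel against $t^{-1}$ this costs $O(t^{-1/2}(1+|z|))$. With both $\sqrt t$'s gone, further replace $f_t^*\n^{\pi_* V}(D)$ by its value at $(x,0)$ along $TB_0$ (error $O(t^{-1/2}(1+|y|))$: the $B_0$--component varies by $O(|y|/\sqrt t)$, the $df$--component is $O(1/\sqrt t)$, and the resulting cross term is absorbed using $|y|<\eps\sqrt t$). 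What survives is $-P\n^{\pi_* V}(D)(D^+)^{-2}\n^{\pi_* V}(D)P$ at $(x,0)$, which by Proposition \ref{zushg} equals $(\n^{\ker})^2_x-P(\n^{\pi_* V})^2P|_{(x,0)}$.

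Adding the two blocks, the terms $\pm P(\n^{\pi_* V})^2P|_{(x,0)}$ cancel, leaving $g^*(dy)+(\n^{\ker})^2_x=g^*\bigl(dy+(\n^{\ker})^2\bigr)|_{(x,y)}$ plus a remainder bounded in $\Vert\cdot\Vert_{2,0}$ by $C\,t^{-1/2}(1+|y|+|z|+|z|^2)$, which is the claim. The substance of the argument is the bookkeeping rather than any single estimate: one must verify that \emph{every} remainder --- from the resolvent replacement, from discarding the lower-order summands of $E_t$, from freezing coefficients at $(x,0)$, and from the two parallel transports entering the identifications --- is $\le C\,t^{-1/2}$ times $1+|y|+|z|+|z|^2$ \emph{uniformly} for $z$ on the contours, the key points being that the $\sqrt t$ of the Bismut superconnection is always compensated (once by $f_t^*df=dy/\sqrt t$ in the $dy$--term, once against the factor $t^{-1}$ from $(z-tD^2)^{-1}\approx-t^{-1}(D^+)^{-2}$) and that $|y|$, though unbounded on $(-\eps\sqrt t,\eps\sqrt t)$, satisfies $|y|^2/t\le\eps|y|/\sqrt t$.
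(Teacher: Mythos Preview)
Your proof is correct and follows essentially the same route as the paper's: both split $\alpha$ into the diagonal piece $Pf_t^*E_tP-g^*(dy+P(\n^{\pi_*V})^2P)$ and the off-diagonal piece $Pf_t^*E_tQ(z-tf_t^*D^2)^{-1}Qf_t^*E_tP+g^*P\n^{\pi_*V}(D)(D^+)^{-2}\n^{\pi_*V}(D)P$ via Proposition~\ref{zushg}, and then bound each using Lemma~\ref{lemma} and $\Vert E_t\Vert_{2,0}\le C\sqrt t$. The only cosmetic difference is the order of replacements in the off-diagonal term: the paper replaces one factor of $f_t^*E_t$ by $\sqrt t\,\n^{\pi_*V}(D)$ first (with the resolvent still present), then the resolvent, then the second $E_t$, whereas you replace the resolvent first and then both $E_t$'s; either ordering yields the same bound.
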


\begin{prf}
 
First we use Proposition \ref{zushg} to see that
\begin{align*}
& \left\Vert Pf_t^*E_tP+Pf_t^*E_tQ\left(z-tf_t^*D^2\right)^{-1} Qf_t^*E_tP-  g^* \left(dy+\left(\n^{\ker}\right)^2\right)\right\Vert_{2,0} \\
&=\left\Vert Pf_t^*E_tP+Pf_t^*E_tQ\left(z-tf_t^*D^2\right)^{-1} Qf_t^*E_tP \right. \\
& \quad \left. - g^*\left(dy + P\left(\n^{\pi_* V}\right)^2P - P\n^{\pi_* V}(D)\left(D^{+}\right)^{-2} \n^{\pi_* V}(D)P\right)\right\Vert_{2,0} \\
& \leq \left\Vert Pf_t^* E_t P - g^* \left(dy+P\left(\n^{\pi_* V}\right)^2P\right)\right\Vert_{2,0} \\
& \quad + \left\Vert   Pf_t^* E_t Q \left(z-tf_t^* D^2\right)^{-1}Qf_t^*E_tP + g^*\left( P\n^{\pi_* V} (D)\left(D^{+}\right)^{-2} \n^{\pi_*}(D)P\right) \right\Vert_{2,0} .
\end{align*}
By definition, Lemma \ref{zurueckholen} and \cite[Satz 2.8]{Ruzicka} 
\begin{equation}
\left\Vert Pf_t^* E_t P-g^*\left(dy+P\left(\n^{\pi_* V}\right)^2P\right)\right\Vert_{2,0} \leq \frac{C}{\sqrt{t}}\left(1+\left\vert y\right\vert\right)  .
\end{equation}
For the second summand we have
\begin{align*}
& \left\Vert Pf_t^* E_t Q \left(z-t D^2_{(x, y/\sqrt{t})}\right)^{-1} Q f_t^* E_t P +  g^* \left(P\n^{\pi_* V} (D) \left( D^+_{(x,0)}\right)^{-2}  \n^{\pi_* V} (D)P\right)\right\Vert_{2,0} \\
& \leq \left\Vert Pf_t^* E_t Q \left(z-t D^2_{(x,y/\sqrt{t})}\right)^{-1}Q \left(f_t^* E_t - \sqrt{t} \n^{\pi_* V}(D) \right)P  \right\Vert_{2,0} \\
& \quad + \left\Vert Pf_t^* E_t Q \left( \left(z-tD^2_{(x,y/\sqrt{t})}\right)^{-1} + t^{-1} \left(D^+_{(x,0)}\right)^{-2} \right) Q \sqrt{t} \n^{\pi_* V}(D)P \right\Vert_{2,0} \\
& \quad + \left\Vert P \left(-f_t^* E_t + \sqrt{t} \n^{\pi_* V}(D)\right) t^{-1} \left(D^+_{(x,0)}\right)^{-2} \sqrt{t} \n^{\pi_* V} (D) P\right\Vert_{2,0} \\
& \leq C_1 t^{-1/2} \left(1+\left\vert z \right\vert+ \left\vert z\right\vert^2\right) + C_2 t^{-1/2} \left( \left\vert y\right\vert + \left\vert z\right\vert + \left\vert z\right\vert^2\right) + C_3 t^{-1/2}
\end{align*}
where we used Lemma \ref{lemma} and the definition of $E_t$. 
\end{prf}

\begin{prop}\label{Normabsch}
Let $\left(x,y\right)\in B_0 \times \left(-\varepsilon \sqrt{t},\varepsilon\sqrt{t}\right)$, $z$ in one of our contours and $t$ big enough. We define
\begin{equation}
\left(z-f_t^* \A_t^2\right)^{-1} - \left(z- \left(y+\n^{\ker}\right)^2\right)^{-1} =: \gamma\left(x,y,z,t\right).
\end{equation}
Then there exist constants $C_1, C_2, C_3, C_4 >0$ and polynomials $p_1,p_2,p_3, p_4, p_5$ such that
\begin{align*}
\left\Vert P\gamma P\right\Vert_{0,2} & \leq C_1 t^{-1/2}\left(1+p_1\left(\left\vert y\right\vert\right) + p_2\left(\left\vert z\right\vert\right)\right) \\
\left\Vert P\gamma Q\right\Vert_{0,2} &\leq C_2 t^{-1/2} \left(1+p_3\left(\left\vert z\right\vert\right)\right)\\
\left\Vert Q \gamma P\right\Vert_{0,2}&\leq C_3 t^{-1/2} \left(1+ p_4\left(\left\vert z\right\vert\right)\right) \\
\left\Vert Q\gamma Q\right\Vert_{0,2} & \leq C_4 t^{-1}\left(1+p_5\left(\left\vert z\right\vert\right)\right).
\end{align*}

\end{prop}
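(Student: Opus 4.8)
\emph{Setup.} Abbreviate $R:=\left(z-f_t^*\A_t^2\right)^{-1}$, so that $\gamma = R - \left(z-(y+\n^{\ker})^2\right)^{-1}$, and recall $f_t^*\A_t^2 = tf_t^*D^2 + f_t^*E_t$. The plan is to expand both resolvents into finite Neumann series adapted to the decomposition $\left.\pi_*V\right|_{N_\eps}=L\oplus W$. Split $f_t^*\A_t^2 = H_t+B_t$ with block--diagonal part $H_t:=Pf_t^*\A_t^2P\oplus Qf_t^*\A_t^2Q$ and off--diagonal part $B_t:=Pf_t^*\A_t^2Q+Qf_t^*\A_t^2P$. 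Since $tf_t^*D^2$ commutes with $P$ and $Q$, the operator $B_t$ is the off--diagonal part of $f_t^*E_t$ alone, which by Proposition \ref{zushg} consists of terms of positive form degree; hence $\left(B_t(z-H_t)^{-1}\right)^{k}=0$ for $k>\dim B$, and with $G^{L}:=\left(z-Pf_t^*\A_t^2P\right)^{-1}$ on $L$ and $G^{W}:=\left(z-Qf_t^*\A_t^2Q\right)^{-1}$ on $W$,
\[ R=\sum_{k=0}^{\dim B}(z-H_t)^{-1}\left(B_t(z-H_t)^{-1}\right)^{k},\qquad (z-H_t)^{-1}=G^{L}\oplus G^{W}. \]
The analogous (simpler) expansion in powers of $\left(dy+(\n^{\ker})^2\right)(z-y^2)^{-1}$ represents the model resolvent, which under the parallel transport identification of Proposition \ref{prop1} lives entirely on $L$; thus $Q\gamma Q=QRQ$, $P\gamma Q=PRQ$ and $Q\gamma P=QRP$, whereas $P\gamma P=PRP-\left(z-(y+\n^{\ker})^2\right)^{-1}$.

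\emph{Block estimates.} On $W$ the operator is large: $tf_t^*(D^+)^2$ is self--adjoint with spectrum bounded below by a positive multiple of $t$, which (taking $\eps$ small) all of our contours avoid at distance $\ge ct$, so $\Vert\left(z-tf_t^*(D^+)^2\right)^{-1}\Vert_{0,0}\le C/t$; combining this with Lemma \ref{lemma}, with $\Vert f_t^*E_t\Vert_{2,0}\le C\sqrt t$, and with the finite Neumann expansion of $G^{W}$ around $\left(z-tf_t^*(D^+)^2\right)^{-1}$ gives $\Vert G^{W}\Vert_{0,0}\le Ct^{-1}\left(1+q(|z|)\right)$ and $\Vert G^{W}\Vert_{0,2}\le Ct^{-1}(1+|z|)q(|z|)$ for a polynomial $q$, since the $k$--th Neumann term carries $t^{-1-k/2}|z|^{k}\le t^{-1}|z|^{k}$. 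On $L$ one has $Pf_t^*\A_t^2P=y^2+Pf_t^*E_tP$ with $\Vert Pf_t^*E_tP\Vert_{2,0}\le C$ uniformly in $(x,y)$ (the $\sqrt t$ in $\sqrt t[D,\n^{\pi_*V}]$ being cancelled on $L$ by the rescaling $f_t^*d\tilde y=t^{-1/2}dy$), while $|z-y^2|$ is bounded below on the contours; hence $\Vert G^{L}\Vert_{0,0},\Vert G^{L}\Vert_{0,2}\le C$, and likewise $\Vert\left(z-(y+\n^{\ker})^2\right)^{-1}\Vert_{0,2}\le C$. Finally $\Vert B_t\Vert_{2,0}\le C\sqrt t$.

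\emph{The four blocks.} Because $B_t$ is off--diagonal, the only nonzero terms in $QRQ=\sum_kQ(z-H_t)^{-1}\left(B_t(z-H_t)^{-1}\right)^kQ$ have $k=2m$ and read $QG^{W}\left(B_tG^{L}B_tG^{W}\right)^{m}Q$, i.e.\ $m+1$ factors $G^{W}$, $m$ factors $G^{L}$, separated by $2m$ factors $B_t$. Chaining the Sobolev indices $H^0\xrightarrow{G^{\bullet}}H^2\xrightarrow{B_t}H^0$, each group $B_tG^{L}B_tG^{W}$ is bounded by $\sqrt t\cdot C\cdot\sqrt t\cdot Ct^{-1}(1+|z|)q(|z|)=C(1+|z|)q(|z|)$ and the outermost extra $G^{W}$ by $Ct^{-1}(1+q(|z|))$, so summation over $m\le\tfrac12\dim B$ yields $\Vert Q\gamma Q\Vert_{0,0}\le C_4t^{-1}\left(1+p_5(|z|)\right)$. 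The same count with $k=2m+1$ produces $m+1$ factors of each of $G^{L},G^{W}$ and $2m+1$ of $B_t$, leaving one unmatched $\sqrt t$, whence $\Vert P\gamma Q\Vert_{0,0}\le C_2t^{-1/2}\left(1+p_3(|z|)\right)$ and $\Vert Q\gamma P\Vert_{0,0}\le C_3t^{-1/2}\left(1+p_4(|z|)\right)$; the same argument for $PRP$ (pattern $G^{L},G^{W},\dots,G^{L}$) also gives $\Vert PRP\Vert_{0,0}\le C(1+p(|z|))$. For the diagonal block I would use the Feshbach--Schur identity, valid since the contours avoid $\sigma\left(Qf_t^*\A_t^2Q\right)$,
\[ PRP=\left(z-\Sigma_t\right)^{-1},\qquad \Sigma_t=Pf_t^*\A_t^2P+Pf_t^*\A_t^2Q\,\left(z-Qf_t^*\A_t^2Q\right)^{-1}\,Qf_t^*\A_t^2P. \]
Replacing $\left(z-Qf_t^*\A_t^2Q\right)^{-1}$ by $\left(z-tf_t^*(D^+)^2\right)^{-1}$ inside $\Sigma_t$ costs, via the resolvent identity and the two flanking factors $Pf_t^*E_tQ,Qf_t^*E_tP$ of size $\sqrt t$, only a term of order $t^{-1/2}$ times a polynomial in $|z|$; hence Proposition \ref{prop1} gives $\Sigma_t=y^2+g^*\left(dy+(\n^{\ker})^2\right)+\widetilde\alpha$ with $\Vert\widetilde\alpha\Vert_{2,0}\le Ct^{-1/2}\left(1+|y|+|z|+|z|^2\right)$. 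Since $\left(z-(y+\n^{\ker})^2\right)^{-1}$ has inverse $z-y^2-g^*\left(dy+(\n^{\ker})^2\right)$, the resolvent identity gives $P\gamma P=\left(z-\Sigma_t\right)^{-1}\widetilde\alpha\,\left(z-(y+\n^{\ker})^2\right)^{-1}$, and the bounds above, together with the elementary observation that a product of a polynomial in $|z|$ with $1+|y|+|z|+|z|^2$ is dominated by $1+p_1(|y|)+p_2(|z|)$, yield $\Vert P\gamma P\Vert_{0,0}\le C_1t^{-1/2}\left(1+p_1(|y|)+p_2(|z|)\right)$.

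\emph{Main obstacle.} The delicate point is the bookkeeping: one must expand around $H_t$, i.e.\ the $\n^{L\oplus W}$--adapted operator, rather than around $tf_t^*D^2$, precisely so that $B_t$ is genuinely off--diagonal and the chain of block resolvents is forced to alternate, making every $\sqrt t$ from a transition between $L$ and $W$ be absorbed by a $t^{-1}$ from a $W$--block resolvent; with the naive expansion this cancellation is invisible and the higher Neumann terms look divergent. One also has to control carefully all Sobolev/Schatten indices along the composition chains, the parallel transport identification of $L_{(x,y/\sqrt t)}$ with $\ker D_{(x,0)}$ underlying Proposition \ref{prop1}, and the replacement of $\left(z-Qf_t^*\A_t^2Q\right)^{-1}$ by $\left(z-tf_t^*(D^+)^2\right)^{-1}$. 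The polynomials $p_i$ and $q$ simply collect the finitely many powers of $|z|$ produced on the vertical segments of the contours, where $|z|$ is comparable to $t$.
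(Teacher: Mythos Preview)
Your proof is correct and reaches the same conclusion, but it is organized differently from the paper's. The paper expands $\left(z-f_t^*\A_t^2\right)^{-1}$ as $\sum_n A(BA)^n$ with $A=\left(z-tf_t^*D^2\right)^{-1}$ and $B=f_t^*E_t$ (so $B$ has both diagonal \emph{and} off--diagonal blocks), and then tracks by hand which $P/Q$ patterns can occur; the crucial observation is $\Vert PBP\Vert_{2,0}\le C$ (the $\sqrt t$ from $\sqrt t\,\n^{\pi_*V}(D)$ cancels after pullback by $f_t$), after which an easy induction gives the four block bounds. You instead move the diagonal part of $f_t^*E_t$ into $H_t$, so your perturbation $B_t$ is genuinely off--diagonal and the alternation of $G^L$/$G^W$ factors is automatic; for the $P$--block you then invoke the Feshbach--Schur complement and feed Proposition \ref{prop1} into the resolvent identity. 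Both routes rest on exactly the same analytic inputs (Lemma \ref{lemma} and Proposition \ref{prop1}); yours makes the power counting more transparent, while the paper's avoids setting up $G^L$, $G^W$ and the Schur complement.

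One remark on your ``Main obstacle'' paragraph: the claim that the expansion around $tf_t^*D^2$ makes ``the cancellation invisible and the higher Neumann terms look divergent'' is too strong. The paper carries out precisely that expansion, and the apparent $t^{n/2}$ growth disappears once one separates $PBP$ (bounded) from the remaining blocks; so the ``naive'' expansion works perfectly well, it just requires the same block bookkeeping that your approach builds into the decomposition from the start. Also, the positive form degree of $B_t$ follows directly from the definition of $E_t$, not from Proposition \ref{zushg}.
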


\begin{prf}
Throughout the proof we will denote by $p$ some polynomial in $\left\vert z\right\vert$ or $\left\vert y\right\vert$ which may vary from line to line but is independent of $x,t$ and $y$ or $z$ respectively. The constants $C >0$ may also vary but again are indepenent of $x,y,z$ and $t$. For simplicity but by abuse of notation we define just for this proof $A:= \left(z-tf_t^* D^2\right)^{-1}$, $B:= f_t^*E_t$, $X:= \left(z-y^2\right)^{-1} $ and $Y:= dy+ \left(\n^{\ker}\right)^2$. Then we know that
\begin{equation*}
\left(z-f_t^* \A_t^2\right)^{-1} - \left(z-\left(y+\n^{\ker}\right)^2\right)^{-1}  = \sum_{n\geq 0} A(BA)^n - X(YX)^n 
\end{equation*}
where the sum is finite. \newline
Let us first look at
\begin{align*}
&P\left(\sum_{n\geq 0} A(BA)^n-X(YX)^n\right)P \\
&= \sum_{n \geq 0} X P(BA)^n P - X(YX)^n \\
&=\sum_{n\geq 0} X P( (PBP+PBQ+QBP +QBQ)A)^n P - X(YX)^n  
\end{align*}
Since $PQ=QP=0$ the only combination in which $QBQ$ can occur is of the following form
\[ PBQ A (QBQA)^k QBP .\]
We know by Lemma \ref{lemma} that 
\[\left\Vert QAQ\right\Vert_{0,2}=\left\Vert \left(z-t\left(D^+\right)^2\left(x,\frac{y}{\sqrt{t}}\right)\right)^{-1}\right\Vert_{0,2}\leq\frac{C}{t}\left(1+\left\vert z\right\vert + \left\vert z \right\vert^2\right)\]
 and again by the definition of $E_t$ that
 \[\left\Vert B\right\Vert_{2,0}=\left\Vert f_t^*E_t\right\Vert_{2,0}\leq C \sqrt{t}.\]
 This proves that
\[ \left\Vert PBQ A (QBQA)^k QBP \right\Vert_{2,0} \leq C t^{-k/2} \left(1+p(\left\vert z\right\vert)\right).\]
By the same argument as above, $PBQ$ and $QBP$ can only occur as
\[ PBQAQBP.\]
Combining these together with inequality (\ref{absch4}) of Lemma \ref{bla} yields to
\begin{align*}
&\left\Vert P\left( \left(z-f_t^* \A_t^2\right)^{-1} - \left(z-\left(y+\n^{\ker}\right)\right)^{-1}\right)P\right\Vert_{0,2}\\
&\leq \left\Vert \sum_{n \geq 0} X P((PBP+PBQ+QBP)A)^nP - X(YX)^n \right\Vert_{0,2} + C t^{-1/2}\left(1+p(\left\vert z\right\vert)\right)\\
&\leq \sum_{n\geq 0} \left\Vert X ((PBP+PBQAQBP)X)^n - X(YX)^n \right\Vert_{0,2} + C t^{-1/2} \left(1+p(\left\vert z\right\vert)\right) \\
&\leq C t^{-1/2}\left(1+p_1\left(\left\vert y\right\vert\right) + p_2\left(\left\vert z\right\vert\right) \right),
\end{align*}
where we used Proposition \ref{prop1} and inequality (\ref{absch4}) of Lemma \ref{bla} in the last step. \newline
For the other estimates we don't need $X(YX)^n$, since $PX(YX)^nP=X(YX)^n$. We know that
\[ A= \begin{pmatrix}
\left(z-y^2\right)^{-1} & 0 \\ 0 & \left(z- t f_t^* \left(D^+\right)^2\right)^{-1}\end{pmatrix} .\]
As before we know by Lemma \ref{bla} that
\[ \left\Vert A \right\Vert_{0,2}\leq C\left(1+\frac{\left\vert z\right\vert}{t}\right) \]
and by Lemma \ref{lemma}
\[ \left\Vert\left(z-tf_t^* \left(D^+\right)^2\right)^{-1}\right\Vert_{0,2}\leq Ct^{-1} \left(1+ \left\vert z \right\vert\right) .\]
In general $\left\Vert B\right\Vert_{2,0}\leq Ct^{1/2}$ but for $PBP$ we even get
\[ \left\Vert PBP\right\Vert_{2,0}\leq C, \]
since the only summand involving $t$ with a positive exponent is
\[ \sqrt{t}f_t^* P\n^{\pi_* V} (D) P= \sqrt{t}f_t^* dy = dy.\]
Now one can easily check inductively that
\begin{align*}
\left\Vert P A(BA)^n Q \right\Vert_{0,2} & \leq C t^{-1/2}\left(1+p\left(\left\vert z\right\vert\right)\right) \\
\left\Vert Q A(BA)^n P \right\Vert_{0,2} & \leq C t^{-1/2}\left(1+p\left(\left\vert z\right\vert\right)\right) \\
\left\Vert Q A(BA)^n Q\right\Vert_{0,2} & \leq C t^{-1} \left(1+p \left(\left\vert z\right\vert\right)\right)
\end{align*}
which proves the other three estimates in the statement.
\end{prf}

\begin{theorem}\label{asymptotik}
There exist constants $C,c>0$ depending on $\ell$, such that for $t$ big enough we get the following estimates. On $B\backslash N_{\eps}$
\begin{equation}
\left\Vert \left.\tr\left(\exp\left(-\A_t^2\right)\right)\right|_{B\backslash N_{\eps}}\right\Vert_{\mathcal{C}^{\ell}(B\backslash N_{\eps})} \leq C e^{-ct}.
\end{equation}
for all $\mathcal{C}^{\ell}$-norms on $\Omega^{\bullet}(B\backslash N_{\eps})$. On $N_{\eps}\cong B_0\times \left(-\varepsilon,\varepsilon\right)$ and for all $\omega\in\Omega^{\bullet}(B)$
\begin{align}
&\left\Vert \left(\int\limits_{-\varepsilon}^{\varepsilon}  \tr\left(\exp\left(-\A_t^2\right)\right)\right)\omega + \sqrt{\pi}\tr\left(\exp\left(-\left(\n^{\ker}\right)^2\right)\right) i^*\omega \right\Vert_{\mathcal{C}^{\ell}(B_0)} \notag\\
& \quad\leq C t^{-1/2} \left\Vert\omega\right\Vert_{\mathcal{C}^{\ell+1}(B)}.
\end{align}
for all $\mathcal{C}^{\ell}$-norms on $\Omega^{\bullet}(B_0)$. If we combine the estimates we have
\begin{align}
&\left\vert \int\limits_B  \tr^{\odd}\left(\exp\left(-\A_t^2\right)\right)\omega+ \sqrt{\pi}\int\limits_{B_0}  \tr\left(\exp\left(-\left(\n^{\ker}\right)^2\right)\right) i^* \omega\right\vert \notag \\
&\quad\leq \frac{C}{\sqrt{t}} \left\Vert\omega\right\Vert_{\mathcal{C}^1(B)}.
\end{align}

\end{theorem}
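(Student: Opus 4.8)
The plan is to estimate $\tr(\exp(-\A_t^2))$ separately on $B\setminus N_\eps$ and on the tubular neighbourhood $N_\eps\cong B_0\times(-\eps,\eps)$, and on $N_\eps$ to peel off the spectral projection $\PP$ onto the vanishing eigenvalue. On $B\setminus N_\eps$ -- and, since enlarging $\eps$ is harmless, also on $N_\eps\setminus N_{\eps/2}$ -- every eigenvalue of $tD_b^2$ is $\ge ct$, so the argument of Proposition~\ref{1-P} applies with a single contour that may be pushed out to $\RE z\ge ct$ (the large-eigenvalue contour $\Gamma_t$, resp.\ $\Theta_{y\sqrt t}$ for $|y|\ge\eps/2$): combining $\|E_t\|_{2,0}\le C\sqrt t$ with the Schatten bounds of Lemma~\ref{bla} exactly as there gives $\|\tr(\exp(-\A_t^2))\|_{\mathcal{C}^\ell}\le Cf(t)e^{-ct}$, i.e.\ $\le Ce^{-ct}$ after absorbing the polynomial, which is the first claimed estimate.

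On $N_\eps$ I would write $\exp(-\A_t^2)=\PP(\exp(-\A_t^2))+(1-\PP)(\exp(-\A_t^2))$. By Proposition~\ref{1-P} the $\mathcal{C}^\ell$-norm of the second summand is $\le cf(t)e^{-Ct}$, so after integrating over $(-\eps,\eps)$ and wedging with $\omega$ it is $O(e^{-ct}\|\omega\|_{\mathcal{C}^0})$ and can be discarded. For the $\PP$-part I would rescale the normal variable by $f_t\colon(x,u)\mapsto(x,u/\sqrt t)$: since $\int_{-\eps}^\eps(\cdot)$ only sees the $dy$-component of a form and is invariant under $y=u/\sqrt t$,
\[
\int_{-\eps}^{\eps}\tr(\PP(\exp(-\A_t^2)))\wedge\omega=\int_{-\eps\sqrt t}^{\eps\sqrt t}\tr(\PP(\exp(-f_t^*\A_t^2)))\wedge f_t^*\omega .
\]
Then I would compare the integrand with the model heat operator $\exp(-(u+\n^{\ker})^2)_x$ of the rescaled superconnection on $\pi_1^*\ker D$. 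Representing both by $\tfrac1{2\pi i}\int_{\Theta_u}e^{-z}(z-\cdot)^{-1}dz$ (using $\Omega_t$ when $|u|<1$), their difference is $\tfrac1{2\pi i}\int_{\Theta_u}e^{-z}\gamma(x,u,z,t)\,dz$ with $\gamma$ as in Proposition~\ref{Normabsch}. Taking traces -- after the standard device of differentiating the resolvent $p>\dim M_b$ times to reach the trace class and using the Schatten and $L^2$-bounds of Lemma~\ref{bla} -- and inserting the block estimates of Proposition~\ref{Normabsch} (recall $\tr(P\gamma Q)=\tr(Q\gamma P)=0$, while the $PP$- and off-diagonal blocks are $O(t^{-1/2})$ and the $QQ$-block $O(t^{-1})$), one obtains
\[
\left|\tr(\PP(\exp(-f_t^*\A_t^2)))_{(x,u)}-\tr(\exp(-(u+\n^{\ker})^2))_x\right|\le\frac{C}{\sqrt t}\,e^{-cu^2}\,p(|u|)
\]
in the appropriate $\mathcal{C}^\ell(B_0)$-norm, the Gaussian coming from $\RE z\ge u^2/2$ along $\Theta_u$ and $p$ a polynomial absorbing the $|z|$- and $|u|$-powers in Proposition~\ref{Normabsch}. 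Finally I would replace $f_t^*\omega$ by $(i\circ g)^*\omega=\pi_1^*i^*\omega$ at the cost of $\le\frac C{\sqrt t}\|\omega\|_{\mathcal{C}^{\ell+1}}(1+|u|)$, by Lemma~\ref{zurueckholen}.

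After these substitutions the integral over $(-\eps\sqrt t,\eps\sqrt t)$ agrees with $\int_{-\eps\sqrt t}^{\eps\sqrt t}\tr(\exp(-(u+\n^{\ker})^2))\wedge\pi_1^*i^*\omega$ up to an error $O(t^{-1/2}\|\omega\|_{\mathcal{C}^{\ell+1}})$; as the integrand decays like $e^{-cu^2}$ the range may be extended to $\R$ at the cost of $O(e^{-c\eps^2 t})$, and the remaining integral reduces to the elementary Gaussian computation, which with our conventions (exactly as in the $S^1$-example, Theorem~\ref{etas1}) equals $-\sqrt\pi\,\tr(\exp(-(\n^{\ker})^2))\wedge i^*\omega$. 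This gives the second estimate. For the combined estimate I would fix a cut-off $\chi$ with $\supp\chi\subset N_\eps$ and $\chi\equiv1$ on $N_{\eps/2}$, and split $\omega=\chi\omega+(1-\chi)\omega$: the piece $(1-\chi)\omega$ is supported in $B\setminus N_{\eps/2}$ and contributes $O(e^{-ct}\|\omega\|_{\mathcal{C}^0})$ by the first estimate, while for $\chi\omega$ I apply the second estimate with $\ell=0$ (so only $\|\omega\|_{\mathcal{C}^1}$ enters), use $i^*(\chi\omega)=i^*\omega$, and integrate the resulting $\mathcal{C}^0(B_0)$-bound over $B_0$; since the first two estimates bound $\tr(\exp(-\A_t^2))$ in full, hence in particular its odd-degree part, pairing with $\omega$ and integrating over $B$ gives the stated inequality.

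The hard part will be the uniform comparison of the two heat kernels in the second paragraph: one must keep all estimates valid over the \emph{non-compact} rescaled range $|u|\le\eps\sqrt t$, carry the $z$- and $u$-dependence as genuine polynomials so that it is killed by $|e^{-z}|$ along $\Theta_u$, and handle the passage to the trace class carefully, since the resolvents themselves are not trace class and everything must be routed through $(z-\A_t^2)^{-p}$. This is precisely where Propositions~\ref{prop1} and \ref{Normabsch} together with Lemma~\ref{zurueckholen} have to be combined; a secondary nuisance is that these have to be used in a form that also controls derivatives along $B_0$, in order to produce $\mathcal{C}^\ell(B_0)$- rather than merely pointwise bounds.
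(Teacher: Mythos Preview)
Your proposal follows essentially the same route as the paper's proof: exponential decay on $B\setminus N_\eps$ by invertibility of $D$, the $\PP/(1-\PP)$ split on $N_\eps$ with Proposition~\ref{1-P} handling $(1-\PP)$, rescaling by $f_t$, comparing $\PP(\exp(-f_t^*\A_t^2))$ with the model $\exp(-(y+\n^{\ker})^2)$ via contour integrals over $\Omega_t$ (for $|y|\le1$) and $\Theta_y$ (for $1\le|y|\le\eps\sqrt t$) using Proposition~\ref{Normabsch}, and Lemma~\ref{zurueckholen} for the passage $f_t^*\omega\to g^*i^*\omega$; the cut-off argument for the combined estimate is also what the paper does implicitly.

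The one place you overcomplicate is the passage to traces for the $\PP$-part. The paper observes that $\PP$ projects onto a one-dimensional subspace (the generalized eigenspace of the single eigenvalue $ty^2$), so the trace of $\PP(\exp(-f_t^*\A_t^2))-\exp(-(y+\n^{\ker})^2)$ is bounded directly by a constant times its operator norm, and the $\|\cdot\|_{0,0}$-estimates of Proposition~\ref{Normabsch} on $\gamma$ suffice as they stand --- no resolvent differentiation, no Schatten bounds, no block-trace bookkeeping. The device of passing to $(z-\A_t^2)^{-p}$ is used only in Proposition~\ref{1-P} for the infinite-rank $(1-\PP)$-piece, where no finite-rank shortcut is available; your worry in the last paragraph about ``routing everything through $(z-\A_t^2)^{-p}$'' for the $\PP$-part is therefore unnecessary.
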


\begin{prf}
In the following we have constants $C>0$ which may vary from line to line and depend on $\ell$ but not on $t,y,z$ and $x$. \newline
Since $D_b$ is invertible for all $b\in B\backslash N_{\eps}$, we know that 
\[ \left.\left\Vert\tr\left(\exp\left(-\A_t^2\right)\right)\right|_{B\backslash N}\right\Vert_{\mathcal{C}^{\ell}(B\backslash N)}\leq C e^{-ct} \]
on $B\backslash N$ for all $\mathcal{C}^{\ell}$-norms. \newline
On $N$ we know by Proposition \ref{1-P} that
\[ \left\Vert \tr \left( \left( 1-\PP\right)\left( \exp\left( -\A^2_t\right)\right)\right)\right\Vert_{\mathcal{C}^{\ell}(N)} \leq  C f(t) \exp\left(-Kt\right) \]
where $f(t) \in \R  [t, t^{-1}]  $ is a polynomial in $t$ and $t^{-1}$. It remains to show that
\begin{equation}
\left(\int\limits_{-\varepsilon}^{\varepsilon} \tr\left(\PP\left(\exp\left(-\A_t^2\right)\right)\right)\right)\omega+\sqrt{\pi} \tr\left(\exp\left(-\left(\n^{\ker}\right)^2\right) \right)i^*\omega \in\Omega^{\bullet}(B_0)
\end{equation}
is of $O\left(t^{-1/2}\right)$ for all $\mathcal{C}^{\ell}$-norms on $\Omega^{\bullet}(B_0)$. We first prove the statement for $\ell=0$.
\begin{align*}
& \left\Vert \left( \int\limits_{-\varepsilon}^{\varepsilon} \tr\left(\PP\left(\exp\left(-\A_t^2\right)\right)\right)\omega\right) +\sqrt{\pi}\tr\left(\exp\left(-\left(\n^{\ker}\right)^2\right)\right)i^*\omega \right\Vert_{\mathcal{C}^{0}(B_0)}\\
&\leq \left\Vert \int\limits_{-\varepsilon\sqrt{t}}^{\varepsilon\sqrt{t}} \tr\left(\PP\left(\exp\left(-f_t^*\A_t^2\right)\right)f_t^*\omega-\tr\left(\exp\left(-\left(y+\n^{\ker} \right)^2\right)\right)\right) g^*i^*\omega\right\Vert_{\mathcal{C}^0(B_0)} \\
& \quad + C t^{-1/2} e^{-ct}  \\
& \leq \int\limits^{\varepsilon\sqrt{t}}_{-\varepsilon\sqrt{t}} \left(  \left\Vert \tr\left(\PP\left(\exp\left(-f_t^*\A_t^2\right)\right)\right)\right\Vert_{\mathcal{C}^0(B_0)}\left\Vert f_t^* \omega - g^* i^* \omega\right\Vert_{\mathcal{C}^0(B_0)} \right. \\
& \quad \left.+  \left\Vert \tr\left(\exp\left( \PP\left(\exp\left(-f_t^*\A_t^2\right)\right)- \exp\left(-\left(y+\n^{\ker}\right)^2\right)\right) \right)\right\Vert_{\mathcal{C}^0(B_0)}\left\Vert g^* i^* \omega\right\Vert_{\mathcal{C}^0(B_0)}\right) dy \\
&\quad+ Ct^{-1/2} e^{-ct} .
\end{align*}

We write the projection $\PP$ via holomorphic functional calculus. We use the contour $ \Omega_t$ for $\left\vert y\right\vert \leq 1$ and the contour $\Theta_y$ for $1\leq\left\vert y\right\vert \leq \varepsilon \sqrt{t}$. Since $\PP$ projects our operators onto a one-dimensional subspace we make our estimates in the operator instead of the $\left\Vert .\right\Vert_1$-norm.\newline
First case: $\left\vert y \right\vert \leq 1$. 
\begin{align*}
&\left\Vert\tr\left(\PP\left(\exp\left(-f_t^* \A_t^2\right)\right) -\exp\left(-\left(y+\n^{\ker}\right)^2\right)\right)\right\Vert_{\mathcal{C}^{0}(B_0)}\\
& \leq C \left\Vert \frac{1}{2\pi i} \int\limits_{\Omega_t} e^{-z} \left(\left(z-f_t^* \A_t^2\right)^{-1} - \left(z-\left(y+\n^{\ker}\right)\right)^{-1} \right)dz \right\Vert_{0,0} \\
& \leq \frac{C}{2\pi} \int\limits_{\Omega_t} \left\vert e^{-z}\right\vert \left\Vert \left(z-f_t^*\A_t^2 \right)^{-1} - \left(z-\left(y+\n^{\ker}\right)^2\right)^{-1} \right\Vert_{0,0} dz \\
&\leq \frac{C}{2\pi} \int\limits_{\Omega_t} e^{-\RE z} C t^{-1/2} \left(1+ p(\left\vert\RE z\right\vert+1)\right) dz
\end{align*}
here we used Proposition \ref{Normabsch}, $\left\vert y\right\vert \leq 1$ and $\left\vert \IM z\right\vert\leq 1$. Calculating the integral leads to
\begin{equation}\label{ykleiner1}
\left\Vert \tr\left(\PP\left(\exp\left(-f_t^*\A_t^2\right)\right)- \exp\left(-\left(y+\n^{\ker}\right)^2\right)\right)\right\Vert_{\mathcal{C}^0(B_0)} \leq C t^{-1/2}.
\end{equation}
Second case: $1\leq \left\vert y\right\vert\leq \varepsilon\sqrt{t}$. 
\begin{align*}
& \left\Vert \tr\left(\PP\left(\exp\left(-f_t^*\A_t^2\right)\right)- \exp\left(-\left(y+\n^{\ker}\right)^2\right)\right)\right\Vert_{\mathcal{C}^0(B_0)} \\
& \leq \left\Vert \frac{C}{2\pi i} \int\limits_{\Omega_y} e^{-z} \left(\left(z-f_t^*\A_t^2\right)^{-1}-\left(z-\left(y+\n^{\ker}\right)^2\right)^{-1}\right)dz\right\Vert_{0,0} \\
& \leq \frac{C}{2\pi} \int\limits_{\Omega_y} e^{-\RE z} C t^{-1/2} \left(1+ p_1(\left\vert y\right\vert)+p_2(\left\vert\RE z\right\vert +1)\right) dz \\
&\leq C t^{-1/2} e^{-y^2/2} \left(1+p(\left\vert y\right\vert)\right) .
\end{align*}
If we know split the integral over $\left(-\varepsilon\sqrt{t},\varepsilon\sqrt{t}\right)$ into an integral over $\left\vert y\right\vert\leq 1$ and an integral over $1\leq\left\vert y\right\vert\leq\varepsilon\sqrt{t}$ and insert the estimates respectively we obtain
\begin{equation*}
\left\Vert \int\limits^{\varepsilon}_{-\varepsilon}\tr\left(\PP\left(\exp\left(-\A_t^2\right)\right)\right) \omega -\tr\left(\exp\left(-\left(\n^{\ker}\right)^2\right)\right)i^*\omega \right\Vert_{\mathcal{C}^0(B_0)} \leq C t^{-1/2} \left\Vert\omega\right\Vert_{\mathcal{C}^{1}(B)}
\end{equation*}
where we used Lemma \ref{zurueckholen} 
\[ \left\vert \left(f_t^*\omega - g^*i^*\omega\right)\right\vert\leq Ct^{-1/2}\left\Vert \omega\right\Vert_{\mathcal{C}^{1}(B)}.\]

Now we will consider the case $\ell \geq 1$. Let $y_1 ,..., y_{m-1}$ be local coordinates on $B_0$. As already explained in the proof of Proposition \ref{1-P} we can choose any connection $\n$ on $\pi_* V$ to calculate $d\tr\left(\exp\left(-\A_t^2\right)\right)=\tr\left(\n\left(\exp\left(-\A_t^2\right)\right)\right)$. We will consider a diagonal connection $\n$ with respect to the decomposition $L\oplus W$. Then we have the follwing growth in the $\left\Vert \cdot\right\Vert_{2,0}$-norm
\begin{equation*}
\n_{\frac{\del}{\del y_i}} \left(f_t^*\A_t^2\right)=\begin{pmatrix}
 O(1) & O(\sqrt{t}) \\ O(\sqrt{t}) & O(t)
\end{pmatrix}.
\end{equation*}
This uses $\sqrt{t} f_t^* P \n^{\pi_* V}(D)P=\sqrt{t}f_t^* dy=dy$. Using Proposition \ref{Normabsch} we see that in the $\left\Vert\cdot\right\Vert_{0,2}$-norm
\begin{equation*}
\left(z-f_t^*\A_t^2\right)^{-1}=\begin{pmatrix}
O(1) & O\left(\frac{1}{\sqrt{t}}\right) \\
O\left(\frac{1}{\sqrt{t}}\right) & O\left(\frac{1}{t}\right)
\end{pmatrix}.
\end{equation*}
Therefore as in Proposition \ref{1-P} one can see that 
\begin{align*}
\n_{\frac{\del}{\del y_i}}\left(z-f_t^*\A_t^2\right)^{-1}&= \left(z-f_t^*\A_t^2\right)^{-1} \n_{\frac{\del}{\del y_i}}\left(f_t^*\A_t^2\right) \left(z-f_t^*\A_t^2\right)^{-1}\\
&=\begin{pmatrix}
O(1) & O\left(\frac{1}{\sqrt{t}}\right) \\ O\left(\frac{1}{\sqrt{t}}\right) & O\left(\frac{1}{t}\right)
\end{pmatrix}
\end{align*}
in the $\left\Vert \cdot\right\Vert_{0,2}$-norm, just as $\left(z-f_t^*\A_t^2\right)^{-1}$ itself. Inductively this also holds for higher derivatives. Therefore by the same arguments as in the case $\ell =0$ 
\begin{equation*}
\int\limits_{-\eps}^{\eps} \tr\left(\exp\left(-\A_t^2\right)\right) \omega
\end{equation*}
converges with respect to the $\mathcal{C}^{\ell}\left(B_0\right)$-norm for all $\ell\geq 0$.
\end{prf}

\begin{rem}
D. Cibotaru explicitly calculated $\lim_{t\to\infty} \chern(A_t)$ for superconnections $A_t=\n+tA$ on finite rank vector bundles $E\rightarrow B$, see \cite[Theorem 6.7, 6.10]{CibotaruArtikel}. Theorem \ref{asymptotik} can be seen as a generalization to infinite dimensions. In exchange we restrict ourselves to a vector bundle of rank one $\ker D \rightarrow B_0$. In any case the currents we obtain  are not surprising considering what we know from finite dimensions. \newline
The top cohomology class of our representative $-\delta_{B_0} \chern\left(\ker D\rightarrow B_0, \n^{\ker}\right)$ of the analytical index also agrees with the formula given in \cite[Proposition 1.1]{Cibotaru} for $\dim B =3$.
\end{rem}

\begin{prop}\label{thm1}
\[ \beta := \tr^{\ev}\left(\frac{d\A_t}{dt}\exp\left(-\A_t^2\right)\right) dt \in \Omega^{\bullet}(B\times (0,\infty), \CC ) \]
is an integrable differential form.
\end{prop}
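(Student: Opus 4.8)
The claim is that the coefficient functions of $\beta$ lie in $L^1\bigl(B\times(0,\infty)\bigr)$ for the product of a fixed volume form on $B$ with $dt$; since $\beta$ is manifestly smooth on $B\times(0,\infty)$, only finiteness of the integral is at issue. The plan is to bound the coefficients region by region, splitting $(0,\infty)=(0,1]\cup[1,\infty)$ and, for $t\ge 1$, splitting $B=(B\setminus N_\eps)\cup N_\eps$ with $N_\eps\cong B_0\times(-\eps,\eps)$. I will use that $\tfrac{d\A_t}{dt}=\tfrac{1}{2\sqrt t}D+\tfrac18 t^{-3/2}c(T)$ is a first-order operator with coefficients polynomial in $t^{\pm1}$; placed in front of $e^{-\A_t^2}$ it contributes only polynomial-in-$t$ factors (write $e^{-\A_t^2}=e^{-\A_t^2/2}e^{-\A_t^2/2}$ and apply the trace-norm bounds already used in Proposition~\ref{1-P} to one factor and the elementary estimate $\bigl\|\tfrac{d\A_t}{dt}e^{-\A_t^2/2}\bigr\|_{0,0}\le p(t)$ to the other). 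For the short-time end, the classical local estimates for the Bismut superconnection heat kernel (\cite{BismutFreed}, \cite[Ch.~9--10]{BGV}) give, uniformly on the compact $B$, $\bigl\|\tr^{\ev}\bigl(\tfrac{d\A_t}{dt}e^{-\A_t^2}\bigr)\bigr\|\le C\,t^{-1/2}$, so $\beta$ is integrable over $B\times(0,1]$.

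For $t\ge 1$, away from the small eigenvalue the bound is a matter of exponential decay. On $B\setminus N_\eps$ the $D_b$ are invertible with a uniform spectral gap, and on $N_\eps$ we split $e^{-\A_t^2}=\PP(e^{-\A_t^2})+(1-\PP)(e^{-\A_t^2})$ with $\PP$ the spectral projection of $\A_t^2$ onto its small eigenvalue. Running the resolvent and contour estimates of Lemma~\ref{bla} and Proposition~\ref{1-P} with the extra factor $\tfrac{d\A_t}{dt}$ carried along gives $\bigl\|\tr^{\ev}\bigl(\tfrac{d\A_t}{dt}(1-\PP)(e^{-\A_t^2})\bigr)\bigr\|\le p(t)e^{-ct}$ on $N_\eps$ and the analogous bound for $\tfrac{d\A_t}{dt}e^{-\A_t^2}$ on $B\setminus N_\eps$, with $p$ polynomial and $c>0$; both are integrable over the respective products with $[1,\infty)$.

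The remaining, decisive piece is $\tr^{\ev}\bigl(\tfrac{d\A_t}{dt}\,\PP(e^{-\A_t^2})\bigr)$ on $N_\eps\times[1,\infty)$. Since $\PP$ commutes with $\A_t^2$ we have $\PP e^{-\A_t^2}=\PP e^{-\A_t^2}\PP$, and since the off-diagonal part of $\tfrac{d\A_t}{dt}\,\PP e^{-\A_t^2}\PP$ relative to $\PP$ is traceless, we may insert $\PP$ on the left:
\[ \tr^{\ev}\bigl(\tfrac{d\A_t}{dt}\,\PP e^{-\A_t^2}\bigr)=\tr^{\ev}\bigl(\PP\tfrac{d\A_t}{dt}\PP\cdot\PP e^{-\A_t^2}\PP\bigr). \]
On $N_\eps$ the projection $\PP$ has rank one and its degree-zero part is the orthogonal projection $P$ onto $\ker(D-y)$, for which $PDP=yP$; a short computation using this and $|y|\le\eps$ shows that $\PP\tfrac{d\A_t}{dt}\PP$ has $\mathcal C^\ell(B_0)$-valued operator norm at most $C\bigl(\tfrac{|y|}{\sqrt t}+t^{-3/2}\bigr)$, with leading part $\tfrac{y}{2\sqrt t}P$. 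On the other hand $\PP e^{-\A_t^2}\PP$ acts on the line $\im\PP$ as multiplication by a differential form $\phi_t(x,y)$, and the key estimate is the uniform Gaussian bound $|\phi_t(x,y)|\le C\,e^{-ty^2/4}$ on $N_\eps\times[1,\infty)$. I would prove this by the comparison already set up in Theorem~\ref{asymptotik} and Proposition~\ref{Normabsch}: after the rescaling $f_t$, $\PP e^{-\A_t^2}\PP$ differs from the model $\exp\bigl(-(y\sqrt t+\n^{\ker})^2\bigr)$ — whose only degree-zero contribution is $e^{-ty^2}$ — by a term which is $O(t^{-1/2})$ times a polynomial in $|y|\sqrt t$ against $e^{-ty^2/2}$, and $(|y|\sqrt t)^{N}e^{-ty^2/2}=(ty^2)^{N/2}e^{-ty^2/2}\le C_N$ absorbs into the Gaussian. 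Granting the Gaussian bound, the coefficients of $\beta$ on $N_\eps\times[1,\infty)$ are dominated by $C\bigl(\tfrac{|y|}{\sqrt t}+t^{-3/2}\bigr)e^{-ty^2/4}$; the substitution $u=ty^2/4$ gives $\int_1^\infty\tfrac{|y|}{\sqrt t}e^{-ty^2/4}\,dt\le 2\sqrt\pi$ and $\int_1^\infty t^{-3/2}e^{-ty^2/4}\,dt\le 2$, \emph{both uniformly in $y$}, and integrating over $B_0\times(-\eps,\eps)$ yields a finite bound. Together with the earlier regions this gives $\beta\in L^1$.

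The main obstacle is this last step. The naive bound $|\phi_t|\lesssim e^{-ty^2}$ is \emph{not} $t$-integrable near $B_0$ — one has $\int_1^\infty e^{-ty^2}\,dt\sim y^{-2}$ as $y\to 0$ — and it is precisely the factor $\tfrac{|y|}{2\sqrt t}$ coming out of $\tfrac{d\A_t}{dt}$ that makes the $t$-integral converge uniformly in $y$. So the real work is to carry the rescaling estimates of Section~\ref{generalcase} carefully enough to extract simultaneously a genuine Gaussian bound for $\phi_t$ (with the polynomial corrections absorbed) and the extra power of $|y|$ supplied by the prefactor; beyond that, no analytic input is needed that is not already developed there.
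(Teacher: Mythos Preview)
Your outline is plausible and the integrability mechanism you isolate --- the extra factor $|y|/\sqrt t$ coming from $PDP=yP$ rescuing the otherwise non-integrable Gaussian $e^{-ty^2}$ --- is exactly the right heuristic. However, the paper proceeds by a quite different and much shorter route that avoids the direct analysis of $\PP\,\tfrac{d\A_t}{dt}\,\PP$ altogether. Following the proof of \cite[Theorem 10.32]{BGV}, one enlarges the family: set $S=(1-\delta,1+\delta)$, form $\widetilde M=M|_{N_\eps}\times S\to N_\eps\times S$ with vertical metric $s^{-1}g_{M/B}$, so that $\widetilde D_{(b,s)}=\sqrt s\,D_b$. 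The Bismut superconnection $\widetilde\A_t$ of this enlarged family satisfies, by \cite[Lemma 10.31]{BGV},
\[
\left.\tr^{\odd}\bigl(e^{-\widetilde\A_t^2}\bigr)\right|_{s=1}=\tr^{\odd}\bigl(e^{-\A_t^2}\bigr)-t\,\tr^{\ev}\Bigl(\tfrac{d\A_t}{dt}e^{-\A_t^2}\Bigr)\,ds .
\]
Assumption~\ref{assumption} is inherited by $\widetilde D$, so Theorem~\ref{asymptotik} applies verbatim to the tilde family; since $\widetilde\nabla^{\ker}$ is pulled back from $B_0$, its curvature has no $ds$-component, and reading off the $ds$-part of the comparison estimate (the analogue of (\ref{ykleiner1}) and the subsequent bound) gives directly
\[
\Bigl\|f_t^*\tr^{\ev}\bigl(\PP(\tfrac{d\A_t}{dt}e^{-\A_t^2})\bigr)\Bigr\|_{\mathcal C^\ell(B_0)}\le \frac{C}{t^{3/2}}e^{-y^2/2},
\]
which is manifestly integrable. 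The virtue of this trick is that it reduces the new estimate to the one already proved, with no further resolvent bookkeeping; by contrast, your route requires you to control $\PP\,\tfrac{d\A_t}{dt}\,\PP$ in \emph{every} form degree, and the ``short computation'' you allude to is not short --- the higher-degree pieces of $\PP$ carry powers of $\sqrt t$ coming from $E_t$, and showing that these always combine with enough inverse powers (from the $Q(z-t(D^+)^2)^{-1}Q$ factors) to leave $O(|y|t^{-1/2}+t^{-3/2})$ is essentially a reprise of Propositions~\ref{prop1}--\ref{Normabsch} with an extra operator inserted. It can be done, but the $s$-trick makes it unnecessary.
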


\begin{prf}
We know from \cite[Theorem 2.11]{BGS} that $\left\Vert\beta\right\Vert_{\mathcal{C}^{\ell}(B)} \leq C$ for small $t$ and therefore $\tr^{ev}\left(\frac{d\A_t}{dt}\exp\left(-\A_t^2\right)\right) dt$ is integrable as $t\to 0$.\newline
Since $D_b$ is invertible for all $b\in B\backslash N_{\eps}$ we know that $\beta$ is integrable on $B\backslash N_{\eps} \times \left(0,\infty\right)$ \cite[p. 57]{BC}. So let us now consider $\beta$ on $N_{\eps}\cong B_0\times \left(-\eps , \eps\right)$ as $t\to\infty$. Set $S=(1-\delta , 1+\delta)$ and consider the fibre bundle $\widetilde{M}=\left. M\right|_{N_{\eps}} \times S \rightarrow\widetilde{N_{\eps}}=N_{\eps}\times S$ as in the proof of \cite[Theorem 10.32]{BGV}. We denote the extra coordinate in $S$ by $s$ and define the vertical metric by $g_{\widetilde{M}/\widetilde{B}}=s^{-1}g_{M/B}$. The vertical Dirac bundle will be $\widetilde{V}=V\times S\rightarrow\widetilde{M}$, where we take the natural extensions of the given connections. We will write $\sim$ over all induced objects on this family. So let $\widetilde{\A}$ be the Bismut superconnection in this situation which we scale again by the parameter $t\in\left(0, \infty\right)$ as follows
\[ \widetilde{\A}_t= \sqrt{t} \widetilde{D} + \widetilde{\n^{\pi_* V}} - \frac{1}{4 \sqrt{t}} \widetilde{c(T)} .\]
We made assumption \ref{assumption} for the Dirac operators $D$, but
\[ \widetilde{D}_{(b,s)}=\sqrt{s} D_b \]
implies that it also holds for $\widetilde{D}$. We have a bundle $\ker \widetilde{D} \rightarrow \widetilde{B}_0=B_0 \times S$ which is just the pullback of $\ker D\rightarrow B_0$. The submanifold $B_0 \times S$ is of course not compact, but if we allow $\delta$ to become smaller, we get the same uniform estimates as in Theorem \ref{asymptotik}. By combining the estimates (\ref{ykleiner1}) and the following in the proof of Theorem \ref{asymptotik} we see that for $t$ big enough
\begin{equation}\label{abc}
\left\Vert \tr\left(\widetilde{\PP}\left(\exp\left(-f_t^*\widetilde{\A}_t^2\right)\right) - \exp\left(-\left(y+\widetilde{\n}^{\ker}\right)^2\right)\right)\right\Vert_{\mathcal{C}^{\ell}(B_0)} \leq \frac{C}{\sqrt{t}} e^{-y^2/2}.
\end{equation} 
Now we know by \cite[Lemma 10.31]{BGV} or by a straight forward calculation that
\begin{equation}
\left.\tr^{\odd}\left(\exp\left(-\widetilde{\A}_t^2\right)\right)\right|_{s=1} = \tr^{\odd}\left(\exp\left(-\A_t^2\right)\right) - t \tr^{\ev}\left(\frac{d \A_t}{dt}\exp\left(-\A_t^2\right)\right)ds
\end{equation}
and $ \widetilde{\n}^{\ker}$ is just a pullback from $B_0$ and therefore its curvature $\left(\widetilde{\n}^{\ker}\right)^2$ does not involve $ds$. So equation (\ref{abc}) tells us that
\begin{equation}
\left\Vert f_t^* \tr^{ev}\left(\PP\left(\frac{d\A_t}{dt} \exp\left(-\A_t^2\right)\right)\right)\right\Vert_{\mathcal{C}^{\ell}(B_0)} \leq \frac{C}{t^{3/2}} e^{-y^2/2}.
\end{equation}
Using the estimate of Proposition \ref{1-P} for the projection $1-\PP$ we see that
\begin{equation}
\left\Vert f_t^* \trev\left(\frac{d\A_t}{dt}\exp\left(-\A_t^2\right)\right)\right\Vert_{\mathcal{C}^{\ell}(B_0)}\leq \frac{C}{t^{3/2}} e^{-y^2/2}.
\end{equation}
This proves that $f_t^*\trev\left(\frac{d\A_t}{dt}\exp\left(-\A_t^2\right)\right)$ is integrable on $B_0\times \left(-\eps\sqrt{t}, \eps\sqrt{t}\right)\times (0, \infty )$.  By the transformation theorem $\trev\left(\frac{d\A_t}{dt}\exp\left(-\A_t^2\right)\right)$ is integrable on $B_0\times\left(-\eps,\eps\right)\times\left(0,\infty\right)$ and therefore on all of $B\times \left(0,\infty\right)$.
\end{prf}

\begin{defn}
We define 
\begin{equation}
\hat{\eta}:= \frac{1}{\sqrt{\pi}} \int\limits_0^{\infty}\trev\left(\frac{d\A_t}{dt}\exp\left(-\A_t^2\right)\right) dt,
\end{equation}
which is a well-defined differential form on $B$ with coefficients in $L^1(B)$ by Proposition \ref{thm1} and the Fubini theorem. We define $\tilde{\eta}$ by
\begin{equation}
\tilde{\eta}=\sum\limits_k \left(2\pi i\right)^{-k} \hat{\eta}_{[2k]} .
\end{equation}
We can see $\tilde{\eta}$ as a current 
\begin{align*}
\tilde{\eta}\colon  \Omega^{\bullet}(B) & \rightarrow\R , \\
\omega & \mapsto \int\limits_B \tilde{\eta}\wedge \omega
\end{align*}
and define its differential as a current
\begin{equation}
d\tilde{\eta}\left(\omega\right)=-\tilde{\eta}\left(d\omega\right).
\end{equation}
\end{defn}

\begin{rem}
We know even more about the coefficients of $\tilde{\eta}$ than just being integrable. Since we can prove that $\tilde{\eta}$ is smooth outside the tubular neighbourhood $N_{\eps}$ of $B_0$ for all $\eps>0$, it is smooth if restricted to $B\backslash B_0$. But since our estimates where in the $\mathcal{C}^{\ell}$-norm on $B_0$, we also know that $i^*\tilde{\eta}\in \Omega^{\bullet}(B_0)$ is smooth (Dominated convergence Theorem). Therefore the only singularity is at $B_0$ if we cross it in the normal direction.
\end{rem}

\begin{theorem}
We assume that $\V$ admits a spin structure and denote by $\Sigma$ the corresponding spinor bundle. If the Dirac bundle $V$ is of the form $\Sigma\otimes L$ then  
\begin{equation}\label{deta}
d \tilde{\eta} = \Int \AD \chern\left(L, \n^L\right) + \delta_{B_0} \chern \left( \ker D \rightarrow B_0 , \n^{\ker} \right) ,
\end{equation}
where $\delta_{B_0}$ is the current of integration over the hypersurface $B_0$.

\end{theorem}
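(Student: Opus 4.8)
The plan is to argue entirely with currents and to feed the transgression formula (\ref{tragre}) the two limits it needs: the small-time limit from the Bismut--Freed local index theorem and the large-time limit from Theorem \ref{asymptotik}. Fix a test form $\omega\in\Omega^{\bullet}(B)$, and for $0<s<T<\infty$ put $\alpha(s,T)=\int_s^T\trev(\frac{d\A_t}{dt}\exp(-\A_t^2))\,dt\in\Omega^{2\bullet}(B)$; let $\mathcal{N}$ denote the degreewise rescaling $\gamma\mapsto\sum_k(2\pi i)^{-k}\gamma_{[2k]}$ on even forms and $\gamma\mapsto\sum_k(2\pi i)^{-k}\gamma_{[2k+1]}$ on odd forms, so that $\tilde\eta=\mathcal{N}(\hat\eta)$ and $\mathcal{N}d=d\mathcal{N}$. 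By Proposition \ref{thm1} together with Fubini, $\frac1{\sqrt\pi}\alpha(s,T)\to\hat\eta$ in $L^1(B)$ as $(s,T)\to(0,\infty)$, hence $\frac1{\sqrt\pi}\mathcal{N}(\alpha(s,T))\to\tilde\eta$ in $L^1(B)$ and therefore $\frac1{\sqrt\pi}\int_B\mathcal{N}(\alpha(s,T))\wedge d\omega\to\int_B\tilde\eta\wedge d\omega=-d\tilde\eta(\omega)$, using the smoothness of $d\omega$ and the definition $d\tilde\eta(\omega)=-\tilde\eta(d\omega)$.

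For each fixed $(s,T)$ the form $\mathcal{N}(\alpha(s,T))$ is smooth with only even-degree components, so Stokes' theorem on the closed manifold $B$ (which for an even-degree form $\gamma$ gives $\int_B\gamma\wedge d\omega=-\int_B d\gamma\wedge\omega$), the identity $\mathcal{N}d=d\mathcal{N}$, and the transgression formula in the form $d\alpha(s,T)=\tr^{\odd}(\exp(-\A_s^2))-\tr^{\odd}(\exp(-\A_T^2))$ give
\[
\frac1{\sqrt\pi}\int_B\mathcal{N}(\alpha(s,T))\wedge d\omega=-\frac1{\sqrt\pi}\int_B\mathcal{N}(\tr^{\odd}(\exp(-\A_s^2))-\tr^{\odd}(\exp(-\A_T^2)))\wedge\omega .
\]
Letting $s\to0$ and $T\to\infty$ (the two summands on the right depend only on $s$, resp.\ only on $T$, so the order is irrelevant) one obtains
\[
d\tilde\eta(\omega)=\lim_{s\to0}\frac1{\sqrt\pi}\int_B\mathcal{N}(\tr^{\odd}(\exp(-\A_s^2)))\wedge\omega-\lim_{T\to\infty}\frac1{\sqrt\pi}\int_B\mathcal{N}(\tr^{\odd}(\exp(-\A_T^2)))\wedge\omega .
\]

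It remains to identify the two limits. For the first, since $V=\Sigma\otimes L$, the Bismut--Freed theorem in the form quoted in Section \ref{preliminaries} (see \cite[Theorem 2.10]{BismutFreed}) gives $\frac1{\sqrt\pi}\mathcal{N}(\tr^{\odd}(\exp(-\A_s^2)))\to\Int\AD\chern(L,\n^L)$ in $\mathcal{C}^{\infty}(B)$, hence in particular as currents, so the first limit equals $\int_B(\Int\AD\chern(L,\n^L))\wedge\omega$. For the second, apply the last estimate of Theorem \ref{asymptotik} to each homogeneous component of $\omega$ in turn: for a fixed degree, $\int_B\tr^{\odd}(\exp(-\A_T^2))\wedge\omega'\to-\sqrt\pi\int_{B_0}\tr(\exp(-(\n^{\ker})^2))\wedge i^*\omega'$, where on the left only the degree-$(2j+1)$ part of $\tr^{\odd}(\exp(-\A_T^2))$ and on the right only the degree-$2j$ part of $\tr(\exp(-(\n^{\ker})^2))$ contribute; weighting the degree-$2j$ level by $(2\pi i)^{-j}$, summing over $j$, and recalling that with the normalization $P(-F/2\pi i)$ one has $\chern(\ker D\to B_0,\n^{\ker})=\mathcal{N}(\tr(\exp(-(\n^{\ker})^2)))$, the second limit equals $-\int_{B_0}\chern(\ker D\to B_0,\n^{\ker})\wedge i^*\omega$. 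Since $\delta_{B_0}\chern(\ker D\to B_0,\n^{\ker})(\omega)=\int_{B_0}\chern(\ker D\to B_0,\n^{\ker})\wedge i^*\omega$ for the orientation of $B_0$ fixed after Assumption \ref{assumption}, substituting both limits yields (\ref{deta}).

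The genuine analytic work has already been done: the crux is Theorem \ref{asymptotik}, namely that $\tr^{\odd}(\exp(-\A_T^2))$ converges \emph{as a current}, with uniform rate $O(T^{-1/2})$, to the $\delta$-type object $-\sqrt\pi\,\delta_{B_0}\tr(\exp(-(\n^{\ker})^2))$; that rests on the spectral splitting $\PP+(1-\PP)$, the exponential decay of the $(1-\PP)$-part (Proposition \ref{1-P}), the rescaling $f_t$ magnifying the normal direction of $B_0$, and the comparison estimates of Propositions \ref{zushg}, \ref{prop1} and \ref{Normabsch}. What is left here is essentially bookkeeping: justifying the passage to the limit (legitimate because $\frac1{\sqrt\pi}\mathcal{N}(\alpha(s,T))\to\tilde\eta$ in $L^1$ while both sides of the Stokes identity converge), and checking that the $-1$ produced by Stokes on even-degree forms, combined with the sign in $d\tilde\eta(\omega)=-\tilde\eta(d\omega)$, yields the $+$ signs in (\ref{deta}) — exactly the computation already carried out in the $S^1$ model of Section \ref{example}. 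I therefore expect no further obstacle beyond Theorem \ref{asymptotik} itself.
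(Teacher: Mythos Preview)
Your proposal is correct and follows essentially the same route as the paper: combine the transgression formula (\ref{tragre}) with the small-time limit from \cite[Theorem 2.10]{BismutFreed} and the large-time limit from Theorem \ref{asymptotik}, then apply the $(2\pi i)$-rescaling. The paper's proof is a brief sketch of exactly this; you have simply spelled out more carefully the current-theoretic passage to the limit (pairing with a test form, $L^1$-convergence of $\frac{1}{\sqrt\pi}\alpha(s,T)$ via Proposition \ref{thm1}, Stokes on the smooth approximants, and the sign bookkeeping), which the paper leaves implicit.
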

\begin{proof}
Equation (\ref{deta}) follows from the transgression formula (\ref{tragre})
\begin{equation}
d \int\limits_s^T \tr^{\ev} \left(\frac{d\A_t}{dt}e^{-\A_t^2}\right) = \tr^{\odd} \left(e^{-A_s^2}\right)- \tr^{\odd}\left(e^{-\A_T^2}\right)
\end{equation}
since we know by \cite[Theorem 2.10]{BismutFreed} that for $l=\dim M_b$
\begin{align*}
&\lim_{s\to 0} \frac{1}{\sqrt{\pi}}\tr^{\odd}\left(e^{-\A_s^2}\right)\\
& = \left(2\pi i\right)^{-(l+1)/2}\Int \det\left(\frac{R^{M/B}/2}{\sinh \left(R^{M/B}/2\right)}\right)^{1/2} \tr\left(\exp\left(-\left(\n^L\right)^2\right)\right)
\end{align*}
and by Theorem \ref{asymptotik} that
\begin{equation}
\lim_{T\to\infty}\frac{1}{\sqrt{\pi}}\tr^{\odd}\left(e^{-\A_T^2}\right)=-\delta_{B_0} \tr\left(\exp\left(-\left(\n^{\ker}\right)^2\right)\right).
\end{equation}
If we define the $2\pi i$-scaling as above the resulting formula is
\begin{align*}
d\widetilde{\eta} &=  \Int \det\left(\frac{R^{M/B}/4\pi i}{\sinh\left(R^{M/B}/4\pi i\right)}\right)^{1/2} \tr\left(\exp\left(-\left(\n^L\right)^2/2\pi i\right)\right) \\
& \quad + \delta_{B_0} \tr\left(\exp\left(- \left(\n^{\ker}\right)^2/2\pi i\right)\right) \\
&= \Int \AD \chern\left(L,\n^L\right) + \delta_{B_0} \chern\left(\ker D \rightarrow B_0, \n^{\ker}\right).
\end{align*}
\end{proof}

\bibliographystyle{alphadin}
\bibliography{Literatur}

\end{document}